\documentclass[draft]{amsart}

\setlength{\textheight}{43pc}
\setlength{\textwidth}{28pc}


\usepackage{amsmath}
\usepackage{amsthm}
\usepackage{amssymb}
\usepackage{url}

\usepackage{enumerate}
\usepackage[mathscr]{eucal}
\usepackage{mathrsfs}
\usepackage{verbatim}


\theoremstyle{plain}
\newtheorem{theorem}{Theorem}[section]
\newtheorem{lemma}[theorem]{Lemma}
\newtheorem{proposition}[theorem]{Proposition}
\newtheorem{corollary}[theorem]{Corollary}
\newtheorem{claim}[theorem]{Claim}

\theoremstyle{remark}
\newtheorem{remark}[theorem]{Remark}
\newtheorem{remarks}[theorem]{Remarks}

\numberwithin{equation}{section}
\numberwithin{theorem}{section}


\newcommand{\supp}{\operatorname{supp}}
\newcommand{\dist}{\operatorname{dist}}

\makeatletter
\@namedef{subjclassname@2010}{%
  \textup{2010} Mathematics Subject Classification}
\makeatother


\begin{document}

\subjclass[2010]{Primary 11P21, 11L07. Secondary 42B10}

\date{}

\title[Lattice Points]{Lattice points in large convex planar domains of finite type}

\author[J. Guo]{Jingwei Guo}

\address{Jingwei Guo\\Department of Mathematics\\ University of Wisconsin-Madison\\Madison,
WI 53706, USA}
\email{guo@math.wisc.edu}

\thanks{}

\begin{abstract}
Let $\mathcal{B}$ be a compact convex planar domain with smooth boundary of finite type and $\mathcal{B}_\theta$ its rotation by an angle $\theta$. We prove that for almost every $\theta\in[0, 2\pi]$ the remainder $P_{\mathcal{B}_\theta}(t)$ is of order $O_{\theta}(t^{2/3-\zeta})$ with a positive number $\zeta$ independent of the domain.
\end{abstract}

\maketitle


\section{Introduction}

If $\mathcal{B}$ is a compact domain in $\mathbb{R}^2$, the number of lattice points $\mathbb{Z}^2$ in the dilated domain $t\mathcal{B}$ is approximately $\textrm{area}(t\mathcal{B})$ and the lattice point problem is to estimate the remainder, $P_{\mathcal{B}}(t)$, in the equation
\begin{equation*}
P_{\mathcal{B}}(t)=\#(t\mathcal{B}\cap\mathbb{Z}^2)-\textrm{area}(\mathcal{B})t^2 \quad \textrm{for $t\geqslant 1$}.
\end{equation*}
One has $P_{\mathcal{B}}(t)=O(t)$ since the measure of the boundary of the dilated domain is $O(t)$. See \cite{kratzel} for the history and fundamental results and methods of this problem.

If $\mathcal{B}$ has sufficiently smooth boundary with nonzero curvature the standard estimate is $P_{\mathcal{B}}(t)=O(t^{2/3})$. With various sophisticated methods this bound has been improved and the best known bound is due to Huxley~\cite{huxley2003}. See \cite{huxley} for an introduction to his method. Notice that the conjecture $P_{\mathcal{B}}(t)=O(t^{1/2+\varepsilon})$ is still open.

If we weaken the curvature condition on the boundary, the remainder may become much larger. For instance if the boundary is of finite type $\omega$, $\omega \geqslant 3$, ({\it i.e.} the maximal order of vanishing of the curvature is $\omega-2$), Colin de Verdi\`{e}re~\cite{colin} showed that
\begin{equation*}
P_{\mathcal{B}}(t)=O(t^{1-1/\omega}).
\end{equation*}
At an earlier time Randol~\cite{randol} proved the same bound for a particular domain $\{(x_1, x_2): x_1^{\omega}+x_2^{\omega}\leqslant 1\}$ with $\omega\geqslant 4$ an even integer. Furthermore, he showed that the exponent is the best possible. The sharpness of this bound for this particular domain is because that normals at boundary points with curvature zero are parallel to the coordinate axes. If we consider $\mathcal{B}_\theta$, the rotation of $\mathcal{B}$ by an angle $\theta\in[0, 2\pi]$ about the origin, however, we expect a substantially better estimate for most choices of $\theta$. Colin de Verdi\`{e}re~\cite{colin} showed that if $\theta$ is a sufficiently irrational angle satisfying a certain Diophantine condition then $P_{\mathcal{B}_\theta}(t)=O_{\theta}(t^{2/3})$. Moreover, he showed that
\begin{equation}
P_{\mathcal{B}_\theta}(t)=O_\theta(t^{2/3}) \quad \textrm{for a.e. $\theta$}. \label{bound1}
\end{equation}
Tarnopolska-Weiss~\cite{Tarnopolska} obtained the same bound for almost every rotation of a planar domain of finite type which is star-like with respect to the origin.

Iosevich~\cite{iosevich} further developed this type of results by weakening the curvature condition, and proved
\begin{equation*}
P_{\mathcal{B}_\theta}(t)=O_\theta(t^{2/3}\log^{\delta(p_0)}(t))  \quad \textrm{for a.e. $\theta$},
\end{equation*}
with $\delta(p_0)>1/p_0$ for a given $p_0>1$, for certain class of convex planar domains whose curvature is allowed to vanish to infinite order. His result was extended, in the paper \cite{brandolini} by Brandolini, Colzani, Iosevich, Podkorytov, and Travaglini, to arbitrary convex planar domains with no curvature or regularity assumption on the boundary.

One can also develop Colin de Verdi\`{e}re's result in another direction--to improve the exponent $2/3$ under certain curvature conditions. The first result of this kind can be found in M\"{u}ller and Nowak~\cite{mullernowak}, where they considered a compact planar domain bounded by a closed smooth Jordan curve determined by an analytic function. They evaluated the contributions of boundary points with curvature zero to the remainder $P_{\mathcal{B}}(t)$ and distinguished the cases where the tangent at such a point has rational or irrational slope.
Assuming certain Diophantine approximation of irrational slope, they gave an asymptotic formula for $P_{\mathcal{B}}(t)$, whose main term is given explicitly by absolutely convergent Fourier series, with an error term of order $O(t^{\gamma})$, $\gamma<2/3$ unspecified (see also Nowak~\cite{nowak G1, nowak G2}). As a consequence they obtained
\begin{equation*}
P_{\mathcal{B}_\theta}(t)=O_{\theta}(t^{\gamma}) \quad \textrm{for a.e. $\theta$},
\end{equation*}
where the $\gamma<2/3$ depends on the order of vanishing of the curvature, but not on $\theta$.

Later on, M\"{u}ller and Nowak~\cite{mullernowak2} improved their previous results by using the discrete Hardy--Littlewood method. In particular they obtained
\begin{equation}
|P_{\mathcal{B}_\theta}(t)|\leqslant C_{\theta}\max(t^{\gamma(\omega)}, t^{7/11}(\log t)^{45/22}) \quad \textrm{for a.e. $\theta$}, \label{bound2}
\end{equation}
where $\gamma(\omega)=2/3-1/(9\omega-12)$ and $\omega-2$ is the maximal order of vanishing of the curvature.

Note that $\gamma(\omega)$ tends to $2/3$ as $\omega$ goes to infinity. The goal of this paper is to prove a bound $P_{\mathcal{B}_\theta}(t)=O_{\theta}(t^{2/3-\zeta})$ for almost every rotation with a $\zeta>0$ that does not depend on $\omega$. We will give up some information in terms of the Diophantine condition as given in those papers by M\"{u}ller and Nowak. Specifically we obtain:

\begin{theorem}\label{ae-theorem}
Let $\zeta=1/3831$. If $\mathcal{B}$ is a compact convex planar domain with smooth boundary of finite type which contains the origin as an interior point, then
\begin{equation*}
P_{\mathcal{B}_\theta}(t)=O_{\theta}(t^{2/3-\zeta}) \quad \textrm{for a.e. $\theta$}.
\end{equation*}
\end{theorem}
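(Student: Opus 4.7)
The plan is to combine Huxley-type exponential sum estimates on the portion of $\partial\mathcal{B}_\theta$ where the curvature is bounded below with a measure-theoretic averaging in $\theta$ to control the contribution from neighborhoods of the finitely many points of vanishing curvature. I would begin by smoothly truncating the characteristic function of $\mathcal{B}_\theta$ near its boundary at a scale close to $1/t$ and applying Poisson summation to produce the familiar expansion
\begin{equation*}
P_{\mathcal{B}_\theta}(t)\approx t^2 \sum_{0\neq k\in\mathbb{Z}^2}\hat{\chi}_{\mathcal{B}_\theta}(tk),
\end{equation*}
up to acceptable error terms. The point of the approximation is to reduce everything to estimating an oscillatory sum over the dual lattice.

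I would then analyze $\hat{\chi}_{\mathcal{B}_\theta}(\xi)$ by stationary phase on $\partial\mathcal{B}_\theta$: the critical points are exactly the two boundary points where the outward normal is parallel to $\pm\xi/|\xi|$, with decay $|\xi|^{-3/2}$ at points of nonzero curvature and only $|\xi|^{-1-1/\omega}$ near points where the curvature vanishes to maximal order $\omega-2$. Because $\mathcal{B}$ is of finite type, a partition of unity splits $\partial\mathcal{B}_\theta$ into an arc $\Gamma_g$ on which the curvature is uniformly bounded below, together with small neighborhoods $\Gamma_b^{(j)}$ of the finitely many degenerate points, yielding a corresponding decomposition $P_{\mathcal{B}_\theta}(t)=P_\theta^g(t)+P_\theta^b(t)$.

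For the good part $P_\theta^g$, the amplitudes and phases of the resulting oscillatory integrals are smooth with nonvanishing second derivative, and the problem reduces to bounding exponential sums of a standard type. Feeding these into Huxley's discrete Hardy--Littlewood machinery (as in \cite{huxley2003,huxley}) yields a bound $P_\theta^g(t)=O(t^{\beta}\log^{A} t)$ for an explicit $\beta<2/3$ (indeed Huxley's $131/208$), uniformly in $\theta$ and independent of $\omega$. This already gives much more than what Theorem \ref{ae-theorem} requires for the good contribution; the value $\zeta=1/3831$ is dictated entirely by the bad part.

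The main obstacle is $P_\theta^b$: near a degenerate normal direction the Fourier decay is only $|\xi|^{-1-1/\omega}$, and a trivial sum just reproduces the $O(t^{1-1/\omega})$ bound of Colin de Verdi\`ere, which is worse than $t^{2/3}$ when $\omega\geq 4$ and degrades to it in the limit. The key observation is that the bad cones rotate rigidly with $\theta$, so for almost every $\theta$ no frequency $k\in\mathbb{Z}^2$ is anomalously close to a degenerate normal direction. I would quantify this through an $L^2$ estimate in $\theta$ of a dyadic piece of $P_\theta^b(t)$, using almost-orthogonality of the rotated oscillatory contributions (a Fubini/Plancherel argument together with second moment estimates for exponential sums restricted to narrow cones), and then invoke Chebyshev's inequality and the Borel--Cantelli lemma along a sufficiently dense sequence $t_j\to\infty$, with a monotonicity argument converting bounds at the $t_j$ into bounds for all $t$. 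The hard part is arranging the cutoff scale separating $\Gamma_g$ from $\Gamma_b^{(j)}$, the $L^2$ estimate, and the density of the $t_j$ so that the final exponent is strictly less than $2/3$ by a fixed amount, \emph{independent of} $\omega$; this balance forces $\zeta$ to be small and explicitly yields the stated value $1/3831$.
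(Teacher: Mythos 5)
Your high-level skeleton (mollify, Poisson summation, split the dual lattice into a ``good'' set where the curvature direction is nondegenerate and a ``bad'' set near the degenerate normals, treat the good part by exponential sums and the bad part by averaging in $\theta$) matches the paper's. But there are two genuine gaps, and both concern precisely the place where the hard work lives.

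First, the claim that the good part can be handled by Huxley's machinery so as to yield $t^{131/208}$ ``uniformly in $\theta$ and independent of $\omega$'' is not justified, and the corollary that $\zeta = 1/3831$ is ``dictated entirely by the bad part'' is wrong. The issue is the choice of the curvature threshold $\delta$ separating the two regions. If $\delta$ is held fixed, the bad cones have fixed angular width and the trivial bound on their contribution is of order $t^{1-1/\omega}$, which degrades to $t$ as $\omega\to\infty$ — exactly the phenomenon you must beat, and you cannot beat it with a fixed $\delta$. So $\delta=\delta(t)$ must shrink with $t$ (in the paper $\delta=2^{-j\beta(\omega)}$). But once $\delta$ shrinks, the curvature lower bound on the good arcs degrades with $t$, and you need the exponential-sum estimate to track \emph{explicitly} how it degrades in $\delta$. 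That tracking is the technical heart of the paper (Lemma \ref{non-vanishinglemma} on the nonvanishing determinants with explicit $K$-powers, and Proposition \ref{claim1} with the extra parameter $K$ in every bound). Your proposal treats the good part as if the curvature lower bound were uniform, which makes the $\omega$-independence appear for free; it does not. Moreover, Huxley's method is arithmetic: after rotation by $\theta$ the lattice $\mathbb{Z}^2$ is replaced by $R_\theta^t\mathbb{Z}^2$, and Huxley's bounds are not uniform in $\theta$ (this is why M\"uller--Nowak's constant $C_\theta$ in \eqref{bound2} depends on $\theta$). The paper deliberately avoids Huxley's method and uses instead a Weyl/van der Corput differencing scheme (Lemma \ref{weyl-van-inequality}, from M\"uller) precisely because that scheme can be run with explicit curvature dependence in all constants, so the $\delta$-optimization can be done cleanly. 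The final $\zeta=1/3831$ and $\sigma(\omega)$ come from optimizing the good- and bad-part bounds jointly over $\delta$ and the mollification scale $\varepsilon$, not from the bad part alone.

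Second, your bad-part plan (an $L^2$ estimate in $\theta$, Chebyshev, Borel--Cantelli along a dense sequence $t_j$) is a considerably heavier route than the paper's and introduces unnecessary machinery. The paper takes the $L^1(S^1)$ average of the bad part directly with absolute values inside the sum (Claim \ref{claim2}); the gain comes from the small measure of the bad arcs in $\theta$ combined with the Randol bound \eqref{randolbound}, and no almost-orthogonality or second moments are needed. The almost-everywhere statement then follows from an $L^1(S^1)$ bound on a $\sup$ over dyadic $t$-ranges (Lemma \ref{lemma5:0}--\ref{lemma5:1} and the $\log^{-b}$ weight), which is more direct than a Borel--Cantelli argument along a discretized sequence. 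Your $L^2$ scheme might be pushed through, but you would then still face the same missing piece as above: without an explicit curvature-dependent exponential sum estimate on the good part, you cannot choose $\delta(t)$ so that both parts are simultaneously below $t^{2/3-\zeta}$ with a $\zeta$ independent of $\omega$.
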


This bound is better than \eqref{bound1}, and it is better than \eqref{bound2} if $\omega>427$. Theorem \ref{ae-theorem} follows easily from the following theorem, which contains an improved but more technical statement.

\begin{theorem}\label{finitetype} Let $\zeta=1/3831$. If $\mathcal{B}$ is a compact convex planar domain with smooth boundary of finite type $\omega$ which contains the origin as an interior point, then
\begin{equation*}
\sup\limits_{t\geqslant 2} \log^{-b}(t)t^{-2/3+\zeta+\sigma(\omega)}|P_{\mathcal{B}_\theta}(t)| \in L^1(S^1),
\end{equation*}
where $b>1$ and
\begin{equation*}
\sigma(\omega)=\frac{832}{1277(1277\omega-2496)}.
\end{equation*}
\end{theorem}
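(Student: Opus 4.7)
The plan is to start from the Hlawka–Poisson truncation identity
$$P_{\mathcal{B}_\theta}(t) = \sum_{0 \neq k \in \mathbb{Z}^2} \widehat{\chi_{t\mathcal{B}_\theta}}(k) + O(1),$$
and to analyze the right-hand side by the method of stationary phase applied to the boundary integral representation of $\widehat{\chi_{t\mathcal{B}_\theta}}$. Outside small neighborhoods of the finitely many points $P_1,\dots,P_N$ on $\partial\mathcal{B}$ where the curvature vanishes, one gets the usual oscillatory expansion with amplitude $|k|^{-3/2}$; near each $P_j$, finite-type stationary phase yields amplitude $|k|^{-1-1/\omega}$ (essentially only when the direction of $k$ lies in a narrow arc around the outward normal at $P_j$, rotated by $\theta$).

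Using a smooth partition of unity along $\partial\mathcal{B}$, I would split $P_{\mathcal{B}_\theta}(t) = P_\theta^{\mathrm{ell}}(t) + P_\theta^{\mathrm{flat}}(t)$, where $P_\theta^{\mathrm{flat}}$ collects the contribution from small arcs of $\partial\mathcal{B}$ around each $P_j$ and $P_\theta^{\mathrm{ell}}$ collects the rest. On the elliptic piece the phase is nondegenerate, so after a dyadic decomposition in $|k|$ the exponential sums that appear are of the type treated by the Bombieri–Iwaniec–Huxley discrete Hardy–Littlewood method; feeding this into the Poisson sum and using standard $L^p(S^1)$ or maximal-function arguments in $t$ furnishes an estimate of the form $P_\theta^{\mathrm{ell}}(t) = O_\theta(t^{2/3-\zeta})$ with a $\zeta$ independent of $\omega$ (this is the source of the specific $1/3831$, via Huxley's exponent pair and a routine optimization).

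For the flat piece I would dyadically decompose both in $|k| \asymp N$ and in the width $\delta$ of the cutoff around each $P_j$. A single term at scales $(N,\delta)$ contributes only when $k/|k|$ lies in an arc of length $\lesssim N^{-\alpha(\delta)}$ around the rotated normal $R_\theta n_j$; integrating $|P_\theta^{\mathrm{flat}}(t)|$ in $\theta$ and using Fubini, the measure of the "resonant" $\theta$'s for each $k$ is small, which converts the worst-case $t^{1-1/\omega}$ pointwise bound into a much better average bound. Summing over the dyadic scales and over $P_j$ yields $\int_{S^1} |P_\theta^{\mathrm{flat}}(t)|\,d\theta \lesssim t^{2/3-\zeta-\sigma(\omega)}\log^b t$ once the parameters are balanced. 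Taking the supremum over $t \geq 2$ outside the $\log^b$ factor is handled by inserting a dyadic decomposition $t \asymp T$ and summing absolutely using the extra $\log^{-b}t$ weight, which converts the almost-everywhere statement into the $L^1(S^1)$ statement.

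The main obstacle is the joint optimization of the two-parameter dyadic decomposition of $P_\theta^{\mathrm{flat}}$: on one hand the cutoff $\delta$ must be large enough that the phase near $P_j$ is genuinely degenerate of order $\omega$ (so the amplitude gain of $|k|^{-1-1/\omega}$ applies), but on the other hand $\delta$ must be small enough that the angular support of the nontrivial $k$'s is narrow, so that the average over $\theta$ kills the $t^{1-1/\omega}$ loss. The specific arithmetic of the announced exponents $\zeta=1/3831$ and $\sigma(\omega)=832/(1277(1277\omega-2496))$ arises from balancing this trade-off against the Huxley input on the elliptic side, where the coupling constant $1277$ comes from the exponent pair computation and $832,2496$ are the resulting numerators and offsets. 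The rest of the argument (error terms in stationary phase, treatment of the trivial short $|k|\lesssim 1$ frequencies, reduction from the technical Theorem~\ref{finitetype} to Theorem~\ref{ae-theorem} by Chebyshev) is routine.
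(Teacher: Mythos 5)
Your high-level split into an ``elliptic'' piece (directions of bounded-below curvature) and a ``flat'' piece (directions near the normals at the finitely many curvature-zero points), together with the idea of averaging the flat piece over $\theta$ to convert the worst-case $t^{1-1/\omega}$ into a manageable average, does match the skeleton of the paper's argument: the paper splits the frequency sum over $k\in\mathbb{Z}^2_*$ into $D_1(\delta,\theta)$ (where $K_{\pm k}^\theta>\delta$) and $D_2(\delta,\theta)$, handles the flat part by integrating Randol's pointwise bound $\Phi(\xi)\lesssim(\mathfrak{A}_{\xi,\xi_0})^{-(\omega_0-2)/(2(\omega_0-1))}$ in $\theta$ (Claim~\ref{claim2}), and handles the elliptic part via an asymptotic expansion of $\widehat{\chi}_{\mathcal{B}_\theta}$ followed by exponential sum estimates. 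However, there are two genuine gaps.

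First, the method you name for the elliptic side is not the one the paper uses, and the constants you attribute to it cannot come from where you say. You invoke the Bombieri--Iwaniec--Huxley discrete Hardy--Littlewood method and claim it is the source of $\zeta=1/3831$ and of the coupling constants $1277,\,832,\,2496$. In fact the paper explicitly uses the Weyl--van der Corput differencing scheme (Lemma~\ref{weyl-van-inequality}, quoted from Guo/M\"uller) with $q=3$, $Q=2^q=8$, and Proposition~\ref{claim1}; the remark after Lemma~\ref{lemma5:0} notes that in the nonvanishing-curvature case $\omega=2$ this machinery reproduces the M\"uller-type exponent $O(t^{2/3-1/87})$, which is much weaker than Huxley's $131/208$. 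The numerology $1277,832,2496$ comes from balancing the van der Corput estimate against the curvature threshold $\delta=2^{-j\beta(\omega)}$ and the mollifier scale $\varepsilon=2^{-j\alpha(\omega)}$, not from an exponent pair.

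Second, and more seriously, you do not address the central technical difficulty that the elliptic piece is not uniformly elliptic: after truncating at curvature level $\delta$, the sum $D_1(\delta,\theta)$ still contains frequencies where $K_k^\theta$ is only slightly larger than $\delta$, which tends to $0$ as $t\to\infty$. Consequently every ingredient used on the elliptic side must carry explicit, polynomial-in-curvature constants: the support function derivative bounds $D^\nu H\lesssim|\xi|^{1-|\nu|}(K_\xi)^{3-2|\nu|}$ (Lemma~\ref{upperforH}), the nonvanishing-Hessian estimate $|h_q^\theta|\gtrsim(K_\xi^\theta)^{-8q^2-16q-2}$ with integral vectors $v_1^*,v_2^*$ of size $\asymp(K_\xi^\theta)^{-4q}$ (Lemma~\ref{non-vanishinglemma}), the curvature-explicit error term in the Fourier transform asymptotics (Theorem~\ref{asym-R^2} and Corollary~\ref{FTofInd}, with error $O(\lambda^{-5/2}\delta_\xi^{-7/2}+\lambda^{-N-1}\delta_\xi^{-4N})$ rather than H\"ormander's uniform $O(\lambda^{-5/2})$), and the curvature-dependent exponential sum bound of Proposition~\ref{claim1} together with the directional partition of unity adapted to the cones $\mathfrak{C}(\xi_i,r(\xi_i))$ of aperture $\asymp K_{\xi_i}^{4q+2}$. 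Without quantifying these curvature dependencies, the ``routine optimization'' you invoke has no inputs to optimize; the $\delta^{-208/11}t^{1/22}\varepsilon^{-7/22}$-type bounds that drive the final balancing are exactly the output of this curvature bookkeeping, and this is where the actual work of the proof lies.
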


In this paper we focus on the remainder for rotated planar domains. For other interesting related results (like the mean square lattice point discrepancy for rotated planar domains, or the remainder for rotated high dimensional domains), see \cite{I-S-S}, \cite{I-S-S-2}, \cite{randol_R^n}, \cite{I-S-S-0}, etc.

{\it Notations:} We use the usual Euclidean norm $|x|$ for a point $x\in \mathbb{R}^2$. $B(x, r)\subset \mathbb{R}^2$ represents the Euclidean ball centered at $x$ with radius $r$. The norm of a matrix $A\in \mathbb{R}^{2\times 2}$ is given by $\|A\|=\sup_{|x|=1}|Ax|$. We set $e(f(x))=\exp(-2\pi i f(x))$, $\mathbb{Z}_{*}^{2}=\mathbb{Z}^{2}\setminus \{0\}$, and $\mathbb{R}^2_*=\mathbb{R}^2\setminus \{0\}$. The Fourier transform of any function $f\in L^1(\mathbb{R}^2)$ is $\widehat{f}(\xi)=\int f(x) e(\langle x, \xi\rangle) \, dx$.

We fix $\chi_0$ to be a smooth cut-off function whose value is $1$ on $B(0, 1/2)$ and $0$ on the complement of $B(0, 1)$. For a set $E\subset \mathbb{R}^2$ and a positive number $a$, we define $E_{(a)}$ to be the larger set
\begin{equation*}
E_{(a)}=\{x\in \mathbb{R}^2: \dist(E, x)<a \}.
\end{equation*}

We use the differential operators
\begin{equation*}
D^{\nu}_{x}=\frac{\partial^{|\nu|}}{\partial x_1^{\nu_1} \partial x_2^{\nu_2}} \quad \left(\nu=(\nu_1, \nu_2)\in \mathbb{N}_0^2, |\nu|=\nu_1+\nu_2 \right)
\end{equation*}
and the gradient operator $\nabla_x$. We often omit the subscript if no ambiguity occurs.

For functions $f$ and $g$ with $g$ taking nonnegative real values, $f\lesssim g$ means $|f|\leqslant Cg$ for some constant $C$. If $f$ is nonnegative, $f\gtrsim g$ means $g\lesssim f$. The Landau notation $f=O(g)$ is equivalent to $f\lesssim g$. The notation $f\asymp g$ means that $f\lesssim g$ and $g\lesssim f$.

{\it Structure of the paper:} After giving some geometric facts related to convex planar domains of finite type, we will study the support function of such domains in \S \ref{non-vanishing}. We will establish the nonvanishing of certain $2\times 2$ determinants, which allows us to use the method of stationary phase later in the estimate of some exponential sums. This result is a two dimensional refinement to M\"uller~\cite{mullerII} Lemma 3 with more precise bounds given. We will then prove in \S \ref{four-tran} our main analytic tool, the asymptotic formula of the Fourier transform of certain indicator functions. In \S \ref{proof-claims} we give an estimation of certain exponential sums. One important difference between all these results in this paper and their analogues in the nonvanishing curvature case is that all bounds here contain curvature terms explicitly. In \S \ref{mainterm}, we put all these ingredients together to prove our main theorem. In the appendix we collect several standard results mainly from the oscillatory integral theory.

\section{Some Geometric Facts}\label{geometry}

In the rest of this paper, unless otherwise stated $\mathcal{B}$ will always denote a compact convex planar domain with smooth boundary of finite type $\omega$. In particular we assume, only in \S \ref{mainterm}, that it contains the origin as an interior point.

Since $\partial \mathcal{B}$ is compact and of finite type, it is easy to see that $\partial \mathcal{B}$ can contain only finitely many points with curvature zero. Assume $\{P_i\}_{i=1}^{\Xi}$ are all such points and the curvature of $\partial \mathcal{B}$ at $P_i$ vanishes of order $\omega_i-2$. Each $\omega_i$ must be an even integer greater than three due to the convexity of $\mathcal{B}$.

The Gauss map of $\partial \mathcal{B}$, denoted by $\vec{n}$, maps each boundary point $x\in \partial \mathcal{B}$ to a unit exterior normal $\vec{n}(x)\in S^1$. It is bijective since $\mathcal{B}$ is convex and of finite type. At each boundary point with nonzero curvature, there exists a small neighborhood on which the Gauss map is a diffeomorphism. Denote by $\vec{t}(x)$ the unit tangent vector at $x\in \partial \mathcal{B}$ such that $\{\vec{t}(x), -\vec{n}(x)\}$ has the same orientation as $\{e_1, e_2\}$.

When we express $\partial \mathcal{B}$ by a parametric equation, we always assume the orientation is counterclockwise. Hence the signed curvature is always nonnegative.

For each nonzero $\xi\in \mathbb{R}^2$, there exists a unique point $x(\xi)\in \partial \mathcal{B}$ where the exterior normal is $\xi$. Denote by $K_{\xi}$ the curvature of $\partial \mathcal{B}$ at $x(\xi)$. Denote the $2\times 2$ rotation matrix by
\begin{equation*}
R_{\theta}=\begin{pmatrix}
\cos \theta & -\sin \theta\\ \sin \theta & \cos \theta
\end{pmatrix}
\end{equation*}
and its transpose by $R_{\theta}^{t}$. Then $\mathcal{B}_\theta=R_{\theta}\mathcal{B}$. Define $x^{\theta}(\xi)=R_{\theta}x(R_{\theta}^{t}\xi)$ and $K_{\xi}^{\theta}=K_{R_{\theta}^{t}\xi}$. Then $x^{\theta}(\xi)$ is the unique point on $\partial \mathcal{B}_{\theta}$ where the exterior normal is $\xi$ and $K_{\xi}^{\theta}$ is the curvature of $\partial \mathcal{B}_{\theta}$ at $x^{\theta}(\xi)$.

If $v_1$ and $v_2$ are vectors $\in \mathbb{R}^2$, denote by $\mathfrak{A}_{v_1, v_2}$ the angle between them that is in $[0, \pi]$. By Taylor's formula, it is easy to prove

\begin{lemma}\label{lemma2:1} For each $1\leqslant i\leqslant \Xi$ there exists a small ball\footnote{The balls in $\partial \mathcal{B}$ are the intersection with $\partial \mathcal{B}$ of the usual balls in the space $\mathbb{R}^2$.} $B_i$ in $\partial \mathcal{B}$ about $P_i$ such that for any $P\in B_i$
\begin{equation}
\mathfrak{A}_{\vec{n}(P_i), \vec{n}(P)} \asymp (K_{\vec{n}(P)})^{\frac{\omega_i-1}{\omega_i-2}}, \label{geometry2}
\end{equation}
where the implicit constants depend only on $\mathcal{B}$.
\end{lemma}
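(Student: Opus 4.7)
The plan is to prove the comparison in local coordinates after a rigid motion that places $P_i$ at the origin and aligns the tangent to $\partial \mathcal{B}$ at $P_i$ with the $x_1$-axis, so that $\mathcal{B}$ lies locally on one side of this line. In such a chart $\partial \mathcal{B}$ is the graph of a smooth convex function $x_2 = f(x_1)$ with $f(0)=f'(0)=0$ and $f''\geqslant 0$. Using the curvature formula $K = f''/(1+(f')^2)^{3/2}$ and the hypothesis that the curvature vanishes to order exactly $\omega_i-2$ at $P_i$ (together with the fact, noted in the excerpt, that $\omega_i$ is even), one reads off $f^{(j)}(0)=0$ for $2\leqslant j\leqslant \omega_i-1$ and $f^{(\omega_i)}(0)>0$.

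Next I would invoke Taylor's formula to control $f'$ and $f''$ near $0$. In a sufficiently small neighborhood of $0$, the size of which depends only on $\mathcal{B}$ (via the $C^{\omega_i+1}$-size of $f$ and the strictly positive value $f^{(\omega_i)}(0)$), the remainder terms are dominated by the leading ones, and one obtains
\begin{equation*}
f''(x_1) \asymp |x_1|^{\omega_i-2}, \qquad |f'(x_1)| \asymp |x_1|^{\omega_i-1},
\end{equation*}
with implicit constants depending only on $\mathcal{B}$. Since $1+f'(x_1)^2\asymp 1$ for $x_1$ small, the curvature at $P=(x_1,f(x_1))$ satisfies $K_{\vec{n}(P)}\asymp |x_1|^{\omega_i-2}$, which I would invert to get $|x_1|\asymp (K_{\vec{n}(P)})^{1/(\omega_i-2)}$.

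Finally, I would translate the tangent-slope estimate into the angular statement. In the chosen chart $\vec{n}(P_i)$ is the downward unit vector $(0,-1)$, while $\vec{n}(P) = (f'(x_1),-1)/\sqrt{1+f'(x_1)^2}$; since the angle between two unit vectors is comparable to the sine of that angle when the angle is small, one has $\mathfrak{A}_{\vec{n}(P_i), \vec{n}(P)} \asymp |f'(x_1)|$. Chaining the three comparisons yields
\begin{equation*}
\mathfrak{A}_{\vec{n}(P_i), \vec{n}(P)} \asymp |x_1|^{\omega_i-1} \asymp (K_{\vec{n}(P)})^{\frac{\omega_i-1}{\omega_i-2}},
\end{equation*}
which is \eqref{geometry2}. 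Taking $B_i$ to be the ball of $\partial \mathcal{B}$ around $P_i$ corresponding to the neighborhood of $0$ where these comparisons hold finishes the argument.

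The only real subtlety is uniformity of the implicit constants: one must check that the neighborhood on which the Taylor remainders do not overwhelm the leading terms can be chosen uniformly, and that the resulting $\asymp$ constants depend only on $\mathcal{B}$. This is handled by smoothness of $\partial \mathcal{B}$, the strict positivity of $f^{(\omega_i)}(0)$, and the fact that there are only finitely many points $P_i$, so a standard compactness/continuity argument suffices. No oscillatory-integral or delicate-analysis tool is needed here; the lemma is essentially a Taylor-expansion book-keeping exercise, but it is the geometric input on which the later estimates of the support function and exponential sums will rely.
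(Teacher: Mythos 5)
Your proof is correct and is exactly the Taylor-expansion argument the paper alludes to when it writes ``By Taylor's formula, it is easy to prove'' and gives no further detail. Passing to a graph chart $x_2=f(x_1)$ at $P_i$, reading off $f^{(j)}(0)=0$ for $2\leqslant j\leqslant\omega_i-1$ and $f^{(\omega_i)}(0)>0$ from the vanishing order of the curvature and the evenness of $\omega_i$, and then chaining $\mathfrak{A}_{\vec n(P_i),\vec n(P)}\asymp|f'(x_1)|\asymp|x_1|^{\omega_i-1}$ with $K_{\vec n(P)}\asymp f''(x_1)\asymp|x_1|^{\omega_i-2}$ is the intended computation, and the uniformity-of-constants remark (finitely many $P_i$, smoothness, strict positivity of $f^{(\omega_i)}(0)$) correctly addresses the one point worth checking.
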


A consequence of this lemma is the following result which will be needed in \S \ref{non-vanishing}. This result is proved for $\xi \in S^1$, however, it can be easily extended to $\mathbb{R}^2_*$ since $K_{\xi}$ is positively homogeneous of degree zero.

\begin{lemma}\label{curv-in-ball} There exists a constant $c_1>0$ such that for any $\xi \in S^1$ with $K_{\xi}>0$
\begin{equation*}
K_\eta \asymp K_\xi    \qquad \textrm{if $\eta\in B(\xi, c_1(K_\xi)^{3/2})$}.
\end{equation*}
The constant $c_1$ and implicit constants depend only on $\mathcal{B}$.
\end{lemma}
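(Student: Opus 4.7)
The proof splits into two regimes depending on whether $K_\xi$ is bounded below or very small; near the flat directions I invoke Lemma~\ref{lemma2:1}, away from them I simply use continuity. Since $\partial\mathcal{B}$ has only the finitely many flat points $\{P_i\}_{i=1}^{\Xi}$, I fix disjoint neighborhoods $U_i \subset S^1$ of $\vec{n}(P_i)$ small enough that $\vec{n}^{-1}(U_i) \subset B_i$, where $B_i$ is the ball supplied by Lemma~\ref{lemma2:1}, together with slightly smaller concentric balls $U_i'$. If $\xi \in S^1 \setminus \bigcup_i U_i'$, then $K$ is smooth and bounded below by a positive constant on a uniform neighborhood of $\xi$ (since $\vec{n}$ is a local diffeomorphism there), so $K_{\eta/|\eta|} \asymp K_\xi$ for every $\eta$ within some absolute Euclidean distance of $\xi$; choosing $c_1$ small ensures that $B(\xi, c_1 K_\xi^{3/2})$ is contained in this neighborhood, which handles this case.

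The delicate case is $\xi \in U_i'$ for some $i$. Set $\alpha_i = (\omega_i-1)/(\omega_i-2)$. Because $\omega_i \geqslant 4$ is an even integer (forced by convexity), $\alpha_i \in (1, 3/2]$. Lemma~\ref{lemma2:1} gives $\mathfrak{A}_{\vec{n}(P_i), \xi} \asymp K_\xi^{\alpha_i}$. Since the argument is only needed when $K_\xi$ is small (say below a threshold chosen in terms of $\mathcal{B}$), we have $K_\xi^{3/2} \leqslant K_\xi^{\alpha_i}$. For $\eta \in B(\xi, c_1 K_\xi^{3/2})$ a standard estimate gives $|\eta/|\eta| - \xi| \lesssim c_1 K_\xi^{3/2}$, which for $c_1$ sufficiently small is only a tiny fraction of $\mathfrak{A}_{\vec{n}(P_i), \xi}$. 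The triangle inequality then forces $\eta/|\eta| \in \vec{n}(B_i)$ and $\mathfrak{A}_{\vec{n}(P_i), \eta/|\eta|} \asymp \mathfrak{A}_{\vec{n}(P_i), \xi} \asymp K_\xi^{\alpha_i}$. A second application of Lemma~\ref{lemma2:1}, now at $\eta/|\eta|$, gives $K_{\eta/|\eta|}^{\alpha_i} \asymp K_\xi^{\alpha_i}$, hence $K_\eta = K_{\eta/|\eta|} \asymp K_\xi$ by the positive homogeneity.

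The only nontrivial ingredient, and the main thing to check, is the matching of exponents: the Euclidean radius $c_1 K_\xi^{3/2}$ must be strictly finer than the angular scale $K_\xi^{\alpha_i}$ on which $\xi$ sits relative to the flat direction $\vec{n}(P_i)$. This reduces to $\alpha_i \leqslant 3/2$, equivalently $\omega_i \geqslant 4$, which is exactly the convexity-forced constraint recorded in Section~\ref{geometry}. Once this is in hand, the two regimes glue together after routine choices of constants.
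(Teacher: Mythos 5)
Your proposal is correct and takes essentially the same route as the paper's proof: the key step in both is the exponent comparison $3/2 \geqslant (\omega_i-1)/(\omega_i-2)$ (equivalently $\omega_i \geqslant 4$, which convexity forces), ensuring that for small $K_\xi$ the Euclidean radius $c_1 K_\xi^{3/2}$ is dominated by the angular distance $\mathfrak{A}_{\vec{n}(P_i),\xi} \asymp K_\xi^{(\omega_i-1)/(\omega_i-2)}$ to the flat direction, after which two applications of Lemma~\ref{lemma2:1} close the argument. The only stylistic difference is that you spell out the non-degenerate case (via a compactness/continuity argument) and the passage from $\eta$ to $\eta/|\eta|$ explicitly, whereas the paper dispatches the former with a one-line appeal to the mean value theorem and absorbs the latter into the inequality $\mathfrak{A}_{\eta,\xi}\leqslant\tfrac{\pi}{2}\sin(\mathfrak{A}_{\eta,\xi})$.
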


\begin{proof}[Proof of Lemma \ref{curv-in-ball}] It suffices to prove this result for $\xi\in S^1$ such that $x(\xi)$ is in a small neighborhood in $\partial \mathcal{B}$ about a boundary point with curvature zero, say, $P_i$. Otherwise it follows easily from the mean value theorem. If $\eta\in B(\xi, c_1(K_\xi)^{3/2})$ then
\begin{equation*}
\mathfrak{A}_{\eta, \xi}\leqslant \frac{\pi}{2}\sin(\mathfrak{A}_{\eta, \xi})\leqslant \frac{\pi}{2}c_1 (K_\xi)^{\frac{\omega_i-1}{\omega_i-2}}.
\end{equation*}
To get the last inequality, we use $K_{\xi}<1$ and $\omega_i\geqslant 4$. But \eqref{geometry2} implies
\begin{equation*}
\mathfrak{A}_{\xi, \vec{n}(P_i)} \asymp (K_{\xi})^{\frac{\omega_i-1}{\omega_i-2}}.
\end{equation*}
Hence if $c_1$ is sufficiently small then $\mathfrak{A}_{\eta, \xi}\leqslant \mathfrak{A}_{\xi, \vec{n}(P_i)}/2$, which implies $1/2\leqslant \mathfrak{A}_{\eta, \vec{n}(P_i)}/\mathfrak{A}_{\xi, \vec{n}(P_i)}\leqslant 3/2$. By \eqref{geometry2} again, we get $K_\eta/K_{\xi} \asymp 1$.
\end{proof}

\section{Nonvanishing $2\times 2$ Determinants}\label{non-vanishing}

In this section we will give lower bounds of determinants of certain $2\times 2$ matrices (see Lemma \ref{non-vanishinglemma} below). This result is a refinement to M\"{u}ller~\cite{mullerII} Lemma 3 in two dimensional case with more precise bounds given. It is obtained based on M\"{u}ller's original proof.

The support function of $\mathcal{B}$ is given by $H(\xi)=\sup_{y\in \mathcal{B}}\langle \xi, y\rangle$ for any nonzero $\xi\in \mathbb{R}^2$. Then $H(\xi)=\langle \xi, x(\xi)\rangle$. It is positively homogeneous of degree one, {\it i.e.} $H(\lambda \xi)=\lambda H(\xi)$ if $\lambda>0$. Denote
\begin{equation*}
\mathcal{H}=\mathbb{R}^2\setminus\{r\vec{n}(P_i):\forall \, r\geqslant 0, i=1, \ldots, \Xi\}.
\end{equation*}

\begin{lemma}\label{upperforH} $H$ is smooth in $\mathcal{H}$ and for every $\xi\in \mathcal{H}$
\begin{equation*}
H(\xi)\lesssim |\xi|,
\end{equation*}
\begin{equation*}
D^{\nu}H(\xi) \lesssim 1 \quad \textrm{for $|\nu|=1$},
\end{equation*}
and
\begin{equation*}
D^{\nu}H(\xi) \lesssim |\xi|^{1-|\nu |}(K_\xi)^{3-2|\nu |} \quad \textrm{for } |\nu |\geqslant 2.
\end{equation*}
All implicit constants may depend only on $|\nu|$ and $\mathcal{B}$.
\end{lemma}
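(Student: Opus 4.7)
The plan is to reduce the estimate to a one-dimensional computation along $S^1$ via polar coordinates. Writing $\xi=r(\cos\phi,\sin\phi)$ with $r>0$ and $\vec{n}(\phi)=(\cos\phi,\sin\phi)$, the positive homogeneity of degree one gives $H(\xi)=r\,h(\phi)$ with $h(\phi):=\langle \vec{n}(\phi),x(\vec{n}(\phi))\rangle$. Since the Gauss map of $\partial\mathcal{B}$ is a local diffeomorphism at every boundary point where $K>0$, its inverse is $C^{\infty}$ on $S^1\setminus\{\vec{n}(P_i)\}_{i=1}^{\Xi}$, and this shows $H\in C^{\infty}(\mathcal{H})$. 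The bound $|H(\xi)|\lesssim|\xi|$ is immediate from $H(\xi)=\langle\xi,x(\xi)\rangle$ and the compactness of $\mathcal{B}$. The case $|\nu|=1$ follows from the identity $\nabla H(\xi)=x(\xi)$, which one checks by differentiating $r\,h(\phi)$ in polar coordinates and using $x=h\,\vec{n}+h'\,\vec{t}$.

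For $|\nu|\geqslant 2$, the crux is the one-variable estimate
\begin{equation*}
|h^{(k)}(\phi)|\lesssim K(\phi)^{3-2k}\qquad(k\geqslant 2),
\end{equation*}
where $K(\phi):=K_{\vec{n}(\phi)}$. I would derive this from the classical support-function ODE
\begin{equation*}
h''(\phi)+h(\phi)=\frac{1}{K(\phi)},
\end{equation*}
whose proof is elementary, using $\vec{n}'(\phi)=\vec{t}(\phi)$, $\vec{t}'(\phi)=-\vec{n}(\phi)$, and $dx/d\phi=K^{-1}\vec{t}$. Let $s$ denote arc length on $\partial\mathcal{B}$; then $ds/d\phi=1/K$ yields $d/d\phi=K^{-1}\,d/ds=:L$ on functions of $s$. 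The arc-length derivatives $d^jK/ds^j$ are smooth and bounded on the compact curve $\partial\mathcal{B}$, so all the growth as $K\to 0$ comes from negative powers of $K$. A direct induction on $m$ shows that each application of $L$ raises the worst power of $K$ in the denominator by exactly $2$, producing $|L^m(1/K)|\lesssim K^{-2m-1}$. Differentiating the ODE $(k-2)$ times gives $h^{(k)}=L^{k-2}(1/K)-h^{(k-2)}$, and induction on $k$ yields the claim, the dominant contribution being $L^{k-2}(1/K)\lesssim K^{3-2k}$ since $K$ is bounded above.

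To convert to Cartesian derivatives, observe that $D^{\nu}H$ is positively homogeneous of degree $1-|\nu|$, so it suffices to control $D^{\nu}H$ on $S^1\cap\mathcal{H}$ and extend using $K_\xi=K_{\xi/|\xi|}$. In polar coordinates $\partial_{\xi_j}$ is a smooth $\phi$-dependent combination of $\partial_r$ and $r^{-1}\partial_\phi$; a short induction applied to $H=r\,h(\phi)$ shows that
\begin{equation*}
D^{\nu}H(\xi)=r^{\,1-|\nu|}\sum_{j=0}^{|\nu|}a_{\nu,j}(\phi)\,h^{(j)}(\phi),
\end{equation*}
with trigonometric coefficients $a_{\nu,j}$ bounded uniformly in $\phi$. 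Evaluating at $r=1$ and inserting $|h|,|h'|\lesssim 1$ together with $|h^{(j)}|\lesssim K^{3-2j}$ for $j\geqslant 2$ gives $|D^{\nu}H(\xi)|_{|\xi|=1}\lesssim K_\xi^{3-2|\nu|}$, the $j=|\nu|$ term dominating since $K$ is bounded above on $\partial\mathcal{B}$. Homogeneity then yields the full estimate.

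The main obstacle is the inductive bookkeeping that controls $L^{m}(1/K)$: one must verify that applying $L=K^{-1}\,d/ds$ a total of $m$ times never produces a denominator worse than $K^{2m+1}$, despite the expressions produced by $d/ds$ acting on $K^{-b}$ (which add both a numerator $d^jK/ds^j$ and an extra $K^{-1}$). Once this is in hand, propagating the bound through the ODE to $h^{(k)}$ and through the polar-to-Cartesian conversion to $D^{\nu}H$ is routine. This amounts to the two-dimensional case of M\"uller's Lemma~3, with the refinement that the constants are tracked explicitly in terms of $K_\xi$.
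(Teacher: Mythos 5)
Your proof is correct, and it takes a genuinely different route from the paper. The paper parametrizes $\partial\mathcal{B}$ by arc length $s$, writes $x(\xi)=\vec{r}(s(\xi))$, obtains smoothness of $s(\xi)$ from the implicit function theorem applied to $\xi_1 x_1'(s)+\xi_2 x_2'(s)=0$, and then shows by implicit differentiation and induction that $D^\nu_\xi s(\xi)\lesssim K_\xi^{1-2|\nu|}$; the bound on $D^\nu H$ then follows from $\partial H/\partial\xi_i=x_i(s(\xi))$ via the chain rule and homogeneity. You instead reduce to one variable on $S^1$ via $H=r\,h(\phi)$, exploit the classical support-function ODE $h''+h=1/K$ together with the operator identity $d/d\phi=K^{-1}\,d/ds$, prove $|L^m(1/K)|\lesssim K^{-2m-1}$ by induction, and then convert back to Cartesian derivatives using homogeneity of degree $1-|\nu|$. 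Both proofs rest on the same phenomenon — each angular (or Cartesian) derivative costs a factor $K^{-2}$, one power from $ds/d\phi=K^{-1}$ and one from $d/ds$ acting on a power of $K$ — but they package it differently. Your ODE route makes this bookkeeping quite transparent and reduces the combinatorics to a scalar recursion $h^{(k)}=L^{k-2}(1/K)-h^{(k-2)}$, at the modest cost of the polar-to-Cartesian conversion at the end. The paper's route avoids polar coordinates entirely and produces the Cartesian bounds directly, at the cost of a slightly heavier Faà-di-Bruno-type induction on $D^\nu s(\xi)$. The final estimates are identical.
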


\begin{proof}[Proof of Lemma \ref{upperforH}] Assume $\vec{r}(s)=x_1(s)e_1+x_2(s)e_2$ is a parametrization of $\partial \mathcal{B}$ by arc length $s$. For every $\xi\neq 0$ there exists a unique $s(\xi)$ such that $x(\xi)=x_1(s(\xi))e_1+ x_2(s(\xi))e_2$, which leads to
\begin{equation*}
H(\xi)=\xi_1 x_1(s(\xi))+\xi_2 x_2(s(\xi)).
\end{equation*}

Since
\begin{equation*}
\xi/|\xi|=x_2'(s(\xi))e_1-x_1'(s(\xi)))e_2,
\end{equation*}
we get $\xi_1 x_1'(s(\xi))+\xi_2 x_2'(s(\xi))=0$. Note that if $\xi\in \mathcal{H}$ then
\begin{equation*}
K_\xi=x_1'(s(\xi))x_2''(s(\xi))-x_1''(s(\xi))x_2'(s(\xi))\neq 0.
\end{equation*}
It follows from the implicit function theorem that $s=s(\xi)$ is smooth at $\xi$. Hence $H$ is smooth in $\mathcal{H}$.

We can now estimate derivatives of $H$ by the implicit differentiation. Assume $\xi\in S^1\cap \mathcal{H}$. Differentiating $\xi_1 x_1'(s(\xi))+\xi_2 x_2'(s(\xi))=0$ yields
\begin{equation*}
\frac{\partial s}{\partial \xi_i}(\xi)=\frac{x_i'(s(\xi))}{x_1'(s(\xi))x_2''(s(\xi))-x_1''(s(\xi))x_2'(s(\xi))} \quad (i=1,2).
\end{equation*}
Continuing differentiating these two formulas, we get, by induction,
\begin{equation}
D_{\xi}^{\nu}s(\xi)\lesssim (K_\xi)^{1-2|\nu|} \quad \textrm{for $|\nu|\geqslant 1$}, \label{bound-s}
\end{equation}
where the implicit constant depends only on $|\nu|$ and $\mathcal{B}$. Differentiating $H$ gives
\begin{equation*}
\frac{\partial H}{\partial \xi_i}=x_i(s(\xi)) \lesssim 1 \quad (i=1,2).
\end{equation*}
Hence the bounds for $H$ follow from \eqref{bound-s} and the homogeneity of $H$.
\end{proof}

\begin{remark}\label{remark1}
The support function of $\mathcal{B}_{\theta}$ is given by $H_{\theta}(\xi)=\sup_{y\in \mathcal{B}_{\theta}}\langle \xi, y\rangle$. Denote $\mathcal{H}_{\theta}=R_{\theta}\mathcal{H}$. Since $H_{\theta}(\xi)=H(R_{\theta}^{t}\xi)$, we can easily get bounds for $H_{\theta}$ in the same form as in Lemma \ref{upperforH} (with $\mathcal{H}$ and $K_\xi$ replaced by $\mathcal{H}_{\theta}$ and $K_\xi^{\theta}$).
\end{remark}

The following result is concerning the Hessian matrix of $H$ and will be needed in the proof of next lemma. The proof is easy and we omit it.
\begin{lemma}\label{lemma4:2}
For any $\xi\in \mathcal{H}$, the matrix $\nabla^2_{\xi \xi}H(\xi)$ has two eigenvalues $0$ and $(|\xi|K_{\xi})^{-1}$.
\end{lemma}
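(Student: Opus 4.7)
The approach is to continue the computation already begun in the proof of Lemma~\ref{upperforH}. From there we have $\partial_iH(\xi)=x_i(s(\xi))$, so the chain rule yields
\begin{equation*}
\partial_j\partial_iH(\xi)=x_i'(s(\xi))\,\partial_js(\xi),
\end{equation*}
which says that $\nabla^2_{\xi\xi}H(\xi)$ is the outer product of the column vector $\vec t=(x_1'(s(\xi)),x_2'(s(\xi)))$ with the row vector $\nabla s(\xi)$. In particular it is a rank one matrix, so once we identify one nonzero entry we know everything.

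To evaluate $\nabla s(\xi)$ for an arbitrary $\xi\in\mathcal H$ (rather than $\xi\in S^1$ as in Lemma~\ref{upperforH}) I would differentiate the identity $\xi_1x_1'(s(\xi))+\xi_2x_2'(s(\xi))=0$ with respect to $\xi_i$, obtaining
\begin{equation*}
x_i'(s(\xi))+\bigl(\xi_1x_1''(s(\xi))+\xi_2x_2''(s(\xi))\bigr)\,\partial_is(\xi)=0.
\end{equation*}
Since $\xi=|\xi|(x_2'(s(\xi)),-x_1'(s(\xi)))$ the coefficient of $\partial_is$ collapses, by the definition $K_\xi=x_1'x_2''-x_1''x_2'$, to $-|\xi|K_\xi$. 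Hence $\partial_is(\xi)=x_i'(s(\xi))/(|\xi|K_\xi)$; for $|\xi|=1$ this recovers the formula in the proof of Lemma~\ref{upperforH}, and the appearance of the factor $|\xi|$ is consistent with $s$ being $0$-homogeneous.

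Combining the two computations gives
\begin{equation*}
\bigl(\nabla^2_{\xi\xi}H(\xi)\bigr)_{ij}=\frac{x_i'(s(\xi))\,x_j'(s(\xi))}{|\xi|K_\xi},
\end{equation*}
a symmetric matrix of rank one. Its two eigenvalues are therefore $0$ and its trace, and since $\vec r$ is parametrized by arc length the trace equals $\bigl((x_1')^2+(x_2')^2\bigr)/(|\xi|K_\xi)=1/(|\xi|K_\xi)$. The nonzero eigenspace is spanned by $\vec t$; the zero eigenspace is spanned by $\xi/|\xi|$, which is in any case forced by Euler's identity, degree one homogeneity of $H$ giving $\nabla^2 H(\xi)\,\xi=0$.

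There is essentially no obstacle here: the whole argument is a two line direct calculation, and the only point that needs care is restoring the factor $|\xi|$ when promoting the Lemma~\ref{upperforH} formula for $\partial_is$ from $S^1$ to all of $\mathcal H$. A coordinate free alternative is available via the interpretation of $\nabla H$ as the inverse Gauss map (so that $\nabla^2 H$ restricted to $T_\xi S^1$ inverts the shape operator), but the explicit computation above seems the shortest route given the setup already in place.
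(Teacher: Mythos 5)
Your computation is correct: from $\partial_iH=x_i(s(\xi))$ one indeed gets $\nabla^2H(\xi)=\dfrac{1}{|\xi|K_\xi}\,\vec t\,\vec t^{\,t}$ with $\vec t=(x_1',x_2')$, a rank-one symmetric matrix with eigenvalues $0$ and $1/(|\xi|K_\xi)$ by the arc-length normalization. The paper omits the proof of Lemma~\ref{lemma4:2} as ``easy,'' and your argument is precisely the natural continuation of the implicit differentiation already set up in the proof of Lemma~\ref{upperforH} (including the correct restoration of the $|\xi|$ factor off $S^1$, consistent with Euler's identity $\nabla^2H(\xi)\xi=0$), so there is nothing to add.
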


Given vectors $v_1$, $v_2\in \mathbb{R}^2$, by writing $V=(v_1, v_2)$ we mean $V$ is the matrix with column vectors $v_1, v_2$. If $y\neq 0$ define $F_{\theta}(u_1, u_2)=H_{\theta}(y+u_1 v_1+u_2 v_2)$, $u_1, u_2\in \mathbb{R}$. For $q\in \mathbb{N}$ let
\begin{equation*}
h_q^{\theta}(y, v_1, v_2)=\det\left(g_{i, j} \right)_{1\leqslant i, j\leqslant 2},
\end{equation*}
where
\begin{equation*}
g_{i, j}=\frac{\partial^{q+2}F_{\theta}}{\partial u_1 \partial u_i \partial u_j \partial u_2^{q-1}}(0).
\end{equation*}

The main estimate in this section is the following lemma--the key preliminary for our (later) application of the method of stationary phase with nondegenerate critical points. This result is proved for $\xi\in S^1 \cap \mathcal{H}_{\theta}$, but can be easily extended to $\mathcal{H}_{\theta}$ due to the homogeneity of  $H_{\theta}$.

\begin{lemma} \label{non-vanishinglemma}
For every $\xi\in S^1 \cap \mathcal{H}_{\theta}$, there exist two orthogonal vectors $v_1^*(\xi)$, $v_2^*(\xi)\in \mathbb{Z}^2$ such that
\begin{equation}
|v_1^*|=|v_2^*|\asymp (K_{\xi}^{\theta})^{-4q}\quad \textrm{and}\quad \|(v_1^*, v_2^*)^{-1}\|\lesssim (K_{\xi}^{\theta})^{4q}, \label{lemma4:4-1}
\end{equation}
and a constant $c_2>0$ (depending only on $q$ and $\mathcal{B}$) such that for any $\eta \in B(\xi, c_2(K_\xi^{\theta})^{4q+2})$
\begin{equation}
|h_q^{\theta}(\eta, v_1^*, v_2^*)|\gtrsim (K_\xi^{\theta})^{-8q^2-16q-2},
\label{bbb}
\end{equation}
\begin{equation}
D^{\nu}H_{\theta}(\eta)\lesssim 1  \quad \textrm{for $0\leqslant |\nu|\leqslant 1$},\label{lemma4:4-3}
\end{equation}
and
\begin{equation}
D^{\nu}H_{\theta}(\eta) \lesssim (K_\xi^{\theta})^{3-2|\nu |} \quad \textrm{for $|\nu|\geqslant 2$}.\label{lemma4:4-4}
\end{equation}

The constants implicit in \eqref{lemma4:4-1} and \eqref{bbb} depend only on $q$ and $\mathcal{B}$. Those implicit in \eqref{lemma4:4-3} and \eqref{lemma4:4-4} depend only on $|\nu|$ and $\mathcal{B}$.
\end{lemma}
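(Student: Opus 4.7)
The plan is to construct $v_1^*, v_2^*$ by lattice approximation and then to identify the dominant term in $h_q^\theta(\xi,v_1^*,v_2^*)$ via the homogeneity of $H_\theta$. Set $N=\lfloor (K_\xi^\theta)^{-4q}\rfloor$, let $v_2^*\in\mathbb{Z}^2$ be a nearest lattice point to $N\xi$, and set $v_1^*=R_{\pi/2}v_2^*\in\mathbb{Z}^2$. Since $|v_2^*-N\xi|\leqslant \sqrt{2}/2$, we get $|v_1^*|=|v_2^*|\asymp N$, the pair is orthogonal, and $\|(v_1^*,v_2^*)^{-1}\|\asymp 1/N\asymp (K_\xi^\theta)^{4q}$, proving \eqref{lemma4:4-1}. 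Writing $v_2^*=a_2 e_\xi+b_2 e_\perp$ and $v_1^*=-b_2 e_\xi+a_2 e_\perp$ in the orthonormal basis $(e_\xi,e_\perp)=(\xi,\xi^\perp)$, one has $|a_2|\asymp N$ and $|b_2|\leqslant \sqrt{2}/2$.

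The key analytic tool is the identity, obtained by applying Euler's relation to the degree $(1-a)$ homogeneous function $D_{e_\perp}^a H_\theta$,
\begin{equation*}
D_{e_\xi}^b D_{e_\perp}^a H_\theta(\xi)=\Bigl(\prod_{j=0}^{b-1}(1-a-j)\Bigr)\,D_{e_\perp}^a H_\theta(\xi),
\end{equation*}
whose right-hand side vanishes whenever $a\in\{0,1\}$ and $b\geqslant 2-a$. Expanding $A=D_{v_1^*}^3 D_{v_2^*}^{q-1} H_\theta(\xi)$, $B=D_{v_1^*}^2 D_{v_2^*}^q H_\theta(\xi)$, and $C=D_{v_1^*} D_{v_2^*}^{q+1} H_\theta(\xi)$ multilinearly in $(e_\xi,e_\perp)$, only terms with total $e_\perp$-weight at least $2$ survive. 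In particular the formally leading-in-$N$ contribution to $C$, namely $b_1 a_2^{q+1} D_{e_\perp} D_{e_\xi}^{q+1} H_\theta(\xi)$, vanishes identically. Combining the identity with Lemma~\ref{lemma4:2} (which yields $D_{e_\perp}^2 H_\theta(\xi)=(K_\xi^\theta)^{-1}$) and the bound $|D_{e_\perp}^a H_\theta(\xi)|\lesssim (K_\xi^\theta)^{3-2a}$ for $a\geqslant 2$ from Lemma~\ref{upperforH}, a term-by-term accounting gives
\begin{equation*}
|A|\lesssim (K_\xi^\theta)^{-4q^2-8q-3},\qquad |B|\asymp (K_\xi^\theta)^{-4q^2-8q-1},\qquad |C|\lesssim (K_\xi^\theta)^{-4q^2-4q-1},
\end{equation*}
where the lower bound on $|B|$ comes from the uncancelled dominant term $(-1)^q q!\, a_2^{q+2}(K_\xi^\theta)^{-1}$. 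Consequently $|AC|\lesssim (K_\xi^\theta)^{4q-2}\,|B|^2$, which is at most $|B|^2/2$ once $K_\xi^\theta$ is below a constant depending on $\mathcal{B}$ and $q$ (since $q\geqslant 1$); this yields $|h_q^\theta(\xi,v_1^*,v_2^*)|=|AC-B^2|\gtrsim (K_\xi^\theta)^{-8q^2-16q-2}$. The complementary case of $K_\xi^\theta$ bounded below is absorbed into constants.

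To establish \eqref{bbb} at general $\eta\in B(\xi,c_2(K_\xi^\theta)^{4q+2})$, Lemma~\ref{curv-in-ball} with $c_2$ small ensures $K_\eta^\theta\asymp K_\xi^\theta$ on the ball, so Lemma~\ref{upperforH} (via Remark~\ref{remark1}) furnishes \eqref{lemma4:4-3}--\eqref{lemma4:4-4} with $K_\xi^\theta$ in place of $K_\eta^\theta$. Each entry $g_{i,j}(\eta)$ of the defining matrix is bounded by $N^{q+2}(K_\xi^\theta)^{-2q-1}$ and its $\eta$-gradient by $N^{q+2}(K_\xi^\theta)^{-2q-3}$, so
\begin{equation*}
|h_q^\theta(\eta,v_1^*,v_2^*)-h_q^\theta(\xi,v_1^*,v_2^*)|\lesssim N^{2q+4}(K_\xi^\theta)^{-4q-4}\cdot c_2(K_\xi^\theta)^{4q+2}=c_2(K_\xi^\theta)^{-8q^2-16q-2},
\end{equation*}
which, for $c_2$ sufficiently small, is less than $|h_q^\theta(\xi,v_1^*,v_2^*)|/2$, completing the proof of \eqref{bbb}.

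The main obstacle is the precise bookkeeping in the middle step: because Euler's identity kills the naive leading term of $C$, one must carefully compare the various subleading terms of $A$, $B$, $C$, each of which mixes a power of $N$ (from components of $v_1^*, v_2^*$) with a power of $K_\xi^\theta$ (from Lemma~\ref{upperforH}). The exponent $N\asymp (K_\xi^\theta)^{-4q}$ is chosen so that $|B|^2$ outweighs $|AC|$ by a factor of $(K_\xi^\theta)^{4q-2}$, and the ball radius $(K_\xi^\theta)^{4q+2}$ is correspondingly the largest on which the dominant term in $h_q^\theta$ survives perturbation in $\eta$.
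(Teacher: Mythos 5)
The proposal takes a genuinely different route than the paper at the crucial step. The paper first computes the \emph{exact} value $h_q^\theta(\xi,v_1,v_2)=-q!^2(K_\xi^\theta)^{-2}$ for the unit vectors $v_1=\xi^\perp$, $v_2=\xi$ (using the homogeneity identities \eqref{keylemma1}--\eqref{keylemma2}, which amount to $g_{22}=0$ so that $h_q^\theta=-g_{12}^2$), and then perturbs $v_l\to\tilde v_l$ via the mean value theorem; nonvanishing of the rationalized determinant follows because the perturbation is strictly smaller than the known exact value. You instead bound the three entries $A,B,C$ of the defining matrix for the integral vectors $v_1^*,v_2^*$ directly, isolating the dominant term of $B$ and showing $|AC|\lesssim (K_\xi^\theta)^{4q-2}|B|^2$. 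The Euler-relation bookkeeping in your decomposition is correct, and it is a nice observation that the naive leading term of $C$ vanishes by homogeneity (in the paper's framework this is precisely the fact that $g_{22}(\xi)=0$ for the unit vectors). Your Step 4 matches the paper's. So the small-curvature regime is handled correctly, by a somewhat different mechanism.

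However, the sentence \emph{``The complementary case of $K_\xi^\theta$ bounded below is absorbed into constants''} covers a genuine gap. Two things fail there. First, your lower bound $|B|\gtrsim q!\,a_2^{q+2}(K_\xi^\theta)^{-1}$ relies on the subleading terms being small; the worst such term ($i=2$, $j=1$ in your multilinear expansion) is smaller than the leading one only by a factor $\asymp N^{-1}(K_\xi^\theta)^{-2}\asymp(K_\xi^\theta)^{4q-2}$, which is \emph{not} small once $K_\xi^\theta$ is bounded away from $0$ --- the subleading terms could in principle cancel the dominant one. Second, the step $|AC|\leqslant |B|^2/2$ similarly requires $(K_\xi^\theta)^{4q-2}$ to beat an implicit constant depending on $q$ and $\mathcal{B}$. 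For $K_\xi^\theta\geqslant\epsilon_0$ you must still establish $|h_q^\theta(\xi,v_1^*,v_2^*)|\gtrsim 1$, and nothing in your argument yields nonvanishing there: your choice $N=\lfloor(K_\xi^\theta)^{-4q}\rfloor$ can be as small as $1$, so there is no room to appeal to smallness of perturbations. The fix is exactly what the paper does: compute $h_q^\theta(\xi,v_1,v_2)=-q!^2(K_\xi^\theta)^{-2}$ exactly (valid for \emph{all} $\xi$), and then choose $N\geqslant C(q,\mathcal{B})(K_\xi^\theta)^{-4q}$ with the constant large enough to absorb the mean-value-theorem perturbation; your $N$ lacks this necessary large constant prefactor, and your lower-bound mechanism does not supply a substitute for the exact formula.
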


\begin{proof}[Proof of Lemma \ref{non-vanishinglemma}] We will essentially follow the proof of Lemma 3 in M\"{u}ller \cite{mullerII} (with some minor modification) and establish these inequalities through four steps for an arbitrarily fixed $\xi=(\xi_1, \xi_2)^{t}\in S^1 \cap \mathcal{H}_{\theta}$.

{\bf Step 1.} Denote $v_1=(-\xi_2, \xi_1)^{t}$ and $v_2=(\xi_1, \xi_2)^{t}$. We will first prove
\begin{equation}
h_q^{\theta}(\xi, v_1, v_2)=-q!^2 (K_{\xi}^{\theta})^{-2}.  \label{ddd}
\end{equation}

For $y=(y_1, y_2)^{t}$, set $\widetilde{H}_{\theta}(y)=H_{\theta}(My)$ where $M=(v_2, v_1)$ is an orthogonal matrix. Since $H_{\theta}$ is smooth at $\xi$, so is $\widetilde{H}_{\theta}$ at $e_1$. The Hessian matrix of $\widetilde{H}_{\theta}$ is
\begin{equation*}
\nabla^2\widetilde{H}_{\theta}(y)=M^{t} \nabla^2 H_{\theta}(My) M.
\end{equation*}
Since $\nabla^2 H_{\theta}(\xi)$ has two eigenvalues $0$ and $(K_{\xi}^{\theta})^{-1}$ by Lemma \ref{lemma4:2}, so does  $\nabla^2 \widetilde{H}_{\theta}(e_1)$. Note that
\begin{equation*}
H_{\theta}(\xi+u_1 v_1+u_2 v_2)=\widetilde{H}_{\theta}(e_1+u_1e_2+u_2e_1).
\end{equation*}
We use the latter expression to compute $h_q^{\theta}(\xi, v_1, v_2)$ since the following two equalities (derived from the homogeneity of $\widetilde{H}_{\theta}$; see the proof of M\"{u}ller~\cite{mullerII} Lemma 3) can simplify the computation:
\begin{equation}
(\widetilde{H}_{\theta})_{1j}(e_1)=(\widetilde{H}_{\theta})_{j1}(e_1)=0  \quad (1\leqslant j\leqslant 2); \label{keylemma1}
\end{equation}
\begin{equation}
\frac{\partial^{q+2}\widetilde{H}_{\theta}}{\partial y_1^q \partial y_i \partial y_j}(e_1)=(-1)^q q! (\widetilde{H}_{\theta})_{ij}(e_1) \quad (1\leqslant i,j\leqslant 2). \label{keylemma2}
\end{equation}

The equality \eqref{keylemma1} implies that $(\widetilde{H}_{\theta})_{22}(e_1)=(K_\xi^{\theta})^{-1}$. This, combined  with \eqref{keylemma2}, implies
\begin{equation*}
\frac{\partial^{q+2}}{\partial u_1 \partial u_i\partial u_2 \partial u_2^{q-1}}(H_{\theta}(y+u_1 v_1+u_2 v_2))(0)=\delta_{1i}(-1)^q q!(K_{\xi}^{\theta})^{-1},
\end{equation*}
where $\delta_{ij}$ is the Kronecker notation. This equality easily leads to \eqref{ddd}.

{\bf Step 2.} For any $N\in \mathbb{N}$ there exist two integers $N_l$  ($l=1, 2$) such that
$|\xi_l-N_l/N|\leqslant 1/N$. Denote $\widetilde{v}_1=(-N_2/N, N_1/N)^{t}$ and $\widetilde{v}_2=(N_1/N, N_2/N)^{t}$. Then $|v_l-\widetilde{v}_l|\leqslant \sqrt{2}/N$. If $N\geqslant 2\sqrt{2}$ then $1/2 \leqslant|\widetilde{v}_1|=|\widetilde{v}_2|\leqslant 3/2$. By the mean value theorem and Lemma \ref{upperforH} we get
\begin{equation*}
|h_q^{\theta}(\xi, \widetilde{v}_1, \widetilde{v}_2)-h_q^{\theta}(\xi, v_1, v_2)|\leqslant C_1N^{-1} (K_\xi^{\theta})^{-4q-2},
\end{equation*}
where $C_1$ depends only on $q$ and $\mathcal{B}$. Let $N$ be the smallest integer not less than $2C_1q!^{-2}(K_\xi^{\theta})^{-4q}$. Then
\begin{equation*}
|h_q^{\theta}(\xi, \widetilde{v}_1, \widetilde{v}_2)|\geqslant q!^2(K_\xi^{\theta})^{-2}/2.
\end{equation*}

{\bf Step 3.} Set $v_1^*=N \widetilde{v}_1$ and $v_2^*=N \widetilde{v}_2$. Then $v_1^*$ and $v_2^*$ are two orthogonal integral vectors such that $|v_1^*|=|v_2^*|\asymp_{q, \mathcal{B}} (K_{\xi}^{\theta})^{-4q}$ and
\begin{equation*}
|h_q^{\theta}(\xi, v_1^*, v_2^*)|=N^{2q+4}|h_q^{\theta}(\xi, \widetilde{v}_1, \widetilde{v}_2)| \gtrsim_{q, \mathcal{B}}  (K_\xi^{\theta})^{-8q^2-16q-2}.
\end{equation*}

Since $(v_1^*, v_2^*)=N(\tilde{v}_1, \tilde{v}_2)$ its inverse matrix is
\begin{equation*}
(v_1^*, v_2^*)^{-1}=N^{-1}(\textrm{adjugate matrix of $(\tilde{v}_1, \tilde{v}_2)$})/\det(\tilde{v}_1, \tilde{v}_2),
\end{equation*}
followed by $\|(v_1^*, v_2^*)^{-1}\|\lesssim_{q, \mathcal{B}} (K_{\xi}^{\theta})^{4q}$.

{\bf Step 4.} Assume $\eta\in B(\xi, c_2(K_\xi^{\theta})^{4q+2})$ with $c_2$ chosen below. If $c_2$ is sufficiently small, Lemma \ref{curv-in-ball} implies $K_\eta^{\theta} \asymp K_\xi^{\theta}$. Recalling also Remark \ref{remark1} we immediately get \eqref{lemma4:4-3} and \eqref{lemma4:4-4}.

By the mean value theorem and the assumption $|\eta-\xi|\leqslant c_2(K_\xi^{\theta})^{4q+2}$ we get
\begin{equation*}
|h_q^{\theta}(\eta, v_1^*, v_2^*)-h_q^{\theta}(\xi, v_1^*, v_2^*)|\leqslant  C_2 c_2 (K_\xi^{\theta})^{-8q^2-16q-2}.
\end{equation*}
where $C_2$ depends only on $q$ and $\mathcal{B}$. The inequality \eqref{bbb} follows if $c_2$ is sufficiently small. This finishes the proof.
\end{proof}


\section{The Fourier Transform of Certain Indicator Functions}\label{four-tran}

If $\mathcal{B}$ is a compact convex planar domain with smooth boundary and \emph{positive} Gaussian curvature.
H\"{o}rmander~\cite{hormander} Corollary 7.7.15 gives the following asymptotic formula for the Fourier transform of the indicator function $\chi_{\mathcal{B}}$ for $\lambda>1$ and \emph{every} $\xi\in S^{d-1}$
\begin{equation*}
\widehat{\chi}_{\mathcal{B}}(\lambda \xi)=[A_1K_{\xi}^{-1/2}e^{-2\pi i\lambda \langle \xi, x(\xi) \rangle}+A_2K_{-\xi}^{-1/2}e^{2\pi i\lambda \langle -\xi, x(-\xi) \rangle}]\lambda^{-3/2}+O(\lambda^{-5/2}),
\end{equation*}
where $A_1=e^{3\pi i/4}/2\pi$, $A_2=e^{-3\pi i/4}/2\pi$. This formula is not good for us since the domains we consider are of finite type.

Randol~\cite{randol_R^2} studied the Fourier transforms of the indicator function of a compact (not necessarily convex) planar domain $\mathcal{B}$ of finite type. In particular, his Theorem 1 gave an upper bound for
\begin{equation*}
\Phi(\xi)=\sup\limits_{r>0} r^{3/2}|\widehat{\chi}_{\mathcal{B}}(r\xi)|, \qquad \textrm{$\xi\in S^1$},
\end{equation*}
which blows up as the curvature goes to zero. It says that $\Phi(\xi)$ is always bounded, except in neighborhoods of those points of $S^1$ corresponding to exterior or interior normals to $\partial \mathcal{B}$ at points with curvature zero. In a neighborhood of such a point $\xi_0$,
\begin{equation}
\Phi(\xi)\lesssim (\mathfrak{A}_{\xi, \xi_0})^{-\frac{\omega_0-2}{2(\omega_0-1)}}, \label{randolbound}
\end{equation}
where $\omega_0$ is the largest type at those points of $\partial \mathcal{B}$ at which the exterior normal is either $\xi_0$ or $-\xi_0$. For convex domains of finite type, this bound follows easily from Lemma \ref{lemma2:1} and the argument on \cite[P.~19]{svensson}.

For our purpose we need an asymptotic formula for $\widehat{\chi}_{\mathcal{B}}(\xi)$. We first prove the following lemma.
\begin{lemma}\label{monotonicity} Assume $\mathcal{B}$ is a compact strictly convex planar domain with smooth boundary. Then there exist two positive constants $c$ and $c_3$ (both depending only on $\mathcal{B}$) such that, for any $\xi \in S^1$ and $r\leqslant c_3$,
\begin{equation}
|\langle \vec{n}(x), \xi\rangle|\leqslant 1-c r^2(\min(K_{\xi}, K_{-\xi}))^4 \label{ineq-mono}
\end{equation}
if $x$ is in $\partial \mathcal{B}\setminus (B(x(\xi), r K_{\xi})\cup B(x(-\xi), r K_{-\xi}))$.
\end{lemma}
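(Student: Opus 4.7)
The plan is to parameterize $\partial\mathcal{B}$ by arc length $s$ and reduce the claim to a one-dimensional lower bound on $\gamma(s):=1-\langle\vec{n}(s),\xi\rangle$. Write $\xi=(\cos\alpha,\sin\alpha)$, let $\phi(s)$ be a continuous lift of the normal angle so that $\phi'(s)=\kappa(s)$, and set $\psi(s):=\phi(s)-\alpha$, so that $\gamma(s)=2\sin^2(\psi(s)/2)$. Put $s_0:=s(\xi)$ and $s_1:=s(-\xi)$. Since $1-|\langle\vec{n}(s),\xi\rangle|=\min(\gamma(s),2-\gamma(s))$, I split on the sign of $\langle\vec{n}(s),\xi\rangle$: when $\langle\vec{n}(s),\xi\rangle\geq 0$ I bound $\gamma$ below using the hypothesis $|\vec{r}(s)-x(\xi)|\geq rK_\xi$; the opposite case is handled identically with $\xi\leftrightarrow-\xi$ and $s_0\leftrightarrow s_1$. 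The case $K_\xi=0$ (or $K_{-\xi}=0$) is trivial, so assume both are positive.

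Let $M_0:=\max_{\partial\mathcal{B}}\kappa$ and $M_1:=\max_{\partial\mathcal{B}}|\kappa'|$, both finite by compactness and smoothness. For $|u-s_0|\leq K_\xi/(2M_1)$ one has $\kappa(u)\geq K_\xi-M_1|u-s_0|\geq K_\xi/2$, whence $|\psi(s)|\geq K_\xi|s-s_0|/2$ on that window. Arc length exceeds chord, so $|s-s_0|\geq|\vec{r}(s)-x(\xi)|\geq rK_\xi$; combined with $\sin(x)\geq 2x/\pi$ on $[0,\pi/2]$, this yields
\[
\gamma(s)\geq 2\psi(s)^2/\pi^2\gtrsim r^2K_\xi^4,
\]
valid provided $|\psi(s)|\leq\pi$, which holds once $|s-s_0|\leq 2\pi/(3K_\xi)$. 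Setting $c_3:=\min(1/(2M_1),2\pi/(3M_0^2))$ makes the window $[rK_\xi,\min(K_\xi/(2M_1),2\pi/(3K_\xi))]$ nonempty for every $r\leq c_3$ and every $K_\xi\in(0,M_0]$. For $s$ outside this Taylor window but still with $\langle\vec{n}(s),\xi\rangle\geq 0$, monotonicity of $\gamma$ away from $s_0$ (strict convexity forces $\psi$ to be strictly monotone, and $2\sin^2(\cdot/2)$ is increasing on $[0,\pi]$ and even) reduces the estimate to its value at the boundary of the window, where the bound just established gives $\gamma\gtrsim K_\xi^4\geq r^2K_\xi^4$ since $r\leq c_3\leq 1$.

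The main obstacle is the $K_\xi$-dependent scale of the local Taylor analysis. The quadratic approximation $\gamma(s)\approx K_\xi^2(s-s_0)^2/2$ is valid only on a window of width of order $K_\xi$ rather than of order $1$, because $\gamma'''(s_0)=3K_\xi\kappa'(s_0)=O(K_\xi M_1)$ is of order $K_\xi$, so the cubic term dominates the quadratic as soon as $|s-s_0|\gtrsim K_\xi/M_1$. It is crucial that this $K_\xi$-width window matches precisely the Euclidean separation $rK_\xi$ imposed by the hypothesis; that coincidence is what allows a single uniform constant $c_3=O_{\mathcal{B}}(1)$ to work for every $K_\xi\leq M_0$. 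A secondary technicality is keeping $|\psi|\leq\pi$ throughout the window so the elementary sine bound applies, which produces the extra term $2\pi/(3M_0^2)$ in the definition of $c_3$.
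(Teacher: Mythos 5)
Your argument is correct and is essentially the paper's own: both hinge on a one-variable Taylor expansion on a window of width $\asymp K_\xi$, deliberately matched to the exclusion radius $rK_\xi$ so that a single uniform constant $c_3$ works, and then use a monotonicity argument to cover boundary points farther from $x(\pm\xi)$. The only distinction is the parametrization — the paper writes $\partial\mathcal{B}$ near $x(\xi)$ as a graph over the tangent line and Taylor-expands the slope $h_u'$, reading off $\langle\vec n,\xi\rangle=(1+h_u'^2)^{-1/2}$, while you use arc length and expand the normal angle $\psi$, reading off $\langle\vec n,\xi\rangle=\cos\psi$ — which is the same estimate in different coordinates. (Two cosmetic points worth fixing: take $c_3:=\min\bigl(1,\,1/(2M_1),\,2\pi/(3M_0^2)\bigr)$ so that $r\leqslant 1$ is guaranteed, and replace $M_1$ by $\max(M_1,1)$ to avoid the degenerate constant-curvature case; also strict convexity only gives $\kappa\geqslant 0$, so $\psi$ is nondecreasing rather than strictly increasing, but that suffices for the monotonicity step.)
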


\begin{proof}[Proof of Lemma \ref{monotonicity}]
Note that there exists a $C_0>0$ such that, for any $\xi\in S^1$, the boundary $\partial \mathcal{B}$ in a neighborhood of $x(\xi)$ can be parametrized by
\begin{equation}
\vec{r}(u)=x(\xi)+u \vec{t}(x(\xi))+h(u, \xi)(-\xi) \qquad u\in I=[-C_0, C_0], \label{parametrization}
\end{equation}
where $h(\cdot \, , \xi)\in C^{\infty}(I)$ for all $\xi\in S^1$ such that $h(0, \xi)=0$. Note that \eqref{parametrization} implies $h'_u(0, \xi)=0$ and $h''_u(0, \xi)=K_{\xi}$. Since the map
\begin{equation*}
\xi\in S^1 \mapsto h(\cdot \, , \xi)
\end{equation*}
is continuous and its domain is compact we have
\begin{equation}
|\partial^{j}_{u}h(u, \xi)|\leqslant C_1 \quad \textrm{for any $\xi \in S^1$, $j\in \mathbb{N}_0$, and $u\in I$}. \label{uniformbound}
\end{equation}

Denote by $K_{max}$ the largest curvature of $\partial \mathcal{B}$. Let
\begin{equation*}
c_3=\min (\frac{C_0}{K_{max}}, \frac{1}{C_1}, \frac{2\sqrt{6}}{9 K_{max}^2})
\end{equation*}
and $r\leqslant c_3$. Due to the symmetry and monotonicity it suffices to prove \eqref{ineq-mono} only for $x\in \partial \mathcal{B}\cap \partial B(x(\xi), r K_{\xi})$. Since $r\leqslant C_0/K_{max}$ there exists a $u_*\in I$ such that $x=\vec{r}(u_*)$ with $rK_{\xi}(1+C_1^2)^{-1/2}\leqslant |u_*|\leqslant rK_{\xi}$.

Since $r\leqslant 1/C_1$, Taylor's formula and the size of $u_*$ yields
\begin{equation}
r K_{\xi}^2(1+C_1^2)^{-1/2}/2\leqslant u_*K_{\xi}/2 \leqslant |h'_u(u_*, \xi)|\leqslant 3u_*K_{\xi}/2\leqslant 3 r K_{\xi}^2/2. \label{bound3}
\end{equation}
By Taylor's formula again,
\begin{equation*}
\langle \vec{n}(x), \xi\rangle=(1+h'_u(u_*, \xi)^2)^{-1/2}=1-h'_u(u_*, \xi)^2/2+\textrm{R}
\end{equation*}
with a remainder $|\textrm{R}|\leqslant 3h'_u(u_*, \xi)^4/8\leqslant h'_u(u_*, \xi)^2/4$. The last inequality follows from \eqref{bound3} and $r\leqslant 2\sqrt{6}/9 K_{max}^2$. By \eqref{bound3} again we get
\begin{equation}
\langle \vec{n}(x), \xi\rangle \leqslant 1-c r^2 K_{\xi}^4, \notag
\end{equation}
where $c=1/(16+16C_1^2)$.
\end{proof}

\begin{theorem}\label{asym-R^2} Let $\mathcal{B}$ be a compact strictly convex planar domain with smooth boundary, $s$ the arc length on $\partial \mathcal{B}$, and $n_l$ ($l=1, 2$) the $l^{\textrm{th}}$ component of the Gauss map of $\partial \mathcal{B}$. For $\xi\in S^1$ with $\delta_{\xi}=\min(K_{\xi}, K_{-\xi})>0$, we have
\begin{equation*}
\begin{split}
\widehat{n_l ds}(\lambda \xi)&=\big(e^{\pi i/4}\xi_l(K_{\xi})^{-1/2}e^{-2\pi i\lambda H(\xi)}\\
&\quad +e^{-\pi i/4}(-\xi_l)(K_{-\xi})^{-1/2}e^{2\pi i\lambda H(-\xi)} \big)\lambda^{-1/2}\\
&\quad\quad +O(\lambda^{-3/2}(\delta_{\xi})^{-7/2}+\lambda^{-N}(\delta_{\xi})^{-4N})   \quad \textrm{for $\lambda >0$},
\end{split}
\end{equation*}
where $N\in \mathbb{N}$ and the implicit constant depends only on $N$ and $\mathcal{B}$.
\end{theorem}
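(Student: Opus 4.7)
The plan is to recognize $\widehat{n_l\,ds}(\lambda\xi)$ as a one-dimensional oscillatory integral
\begin{equation*}
\widehat{n_l\,ds}(\lambda\xi) = \int_{\partial\mathcal{B}} n_l(x)\,e(\lambda\langle x,\xi\rangle)\,ds(x),
\end{equation*}
to observe that, by strict convexity, the phase $\langle x,\xi\rangle$ restricted to $\partial\mathcal{B}$ has exactly two critical points, namely $x(\xi)$ and $x(-\xi)$, and to split the integral via a smooth partition of unity (built from $\chi_0$) into two ``stationary'' pieces $I_\pm$ supported in $B(x(\pm\xi),c_3 K_{\pm\xi})$, with $c_3$ as in Lemma~\ref{monotonicity}, and a ``non-stationary'' remainder $I_{\mathrm{reg}}$. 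The two main terms of the theorem will arise as the leading contributions of $I_+$ and $I_-$; their stationary-phase remainders will furnish the error $\lambda^{-3/2}(\delta_\xi)^{-7/2}$, while repeated integration by parts on $I_{\mathrm{reg}}$ will furnish $\lambda^{-N}(\delta_\xi)^{-4N}$.

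For $I_+$, I use the parametrization~\eqref{parametrization} from Lemma~\ref{monotonicity} to write
\begin{equation*}
I_+ = \int a(u)\,e\bigl(\lambda(H(\xi)-h(u,\xi))\bigr)\,du,
\end{equation*}
where $a(u) = \psi_+(\vec{r}(u))\,n_l(\vec{r}(u))\sqrt{1+h'_u(u,\xi)^2}$ is supported on $|u|\lesssim c_3 K_\xi$. The phase $\phi(u) = H(\xi)-h(u,\xi)$ satisfies $\phi'(0)=0$, $\phi''(0)=-K_\xi$, and $|\phi^{(j)}|\lesssim 1$ for $j\geq 3$ by~\eqref{uniformbound}; the amplitude at the critical point is $a(0) = n_l(x(\xi)) = \xi_l$. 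A one-dimensional stationary-phase expansion (of the sort recorded in the appendix) extracts the leading term $e^{\pi i/4}\xi_l(K_\xi)^{-1/2}e^{-2\pi i\lambda H(\xi)}\lambda^{-1/2}$; tracking the standard error formula, in which $|\phi''(0)|^{-1}=K_\xi^{-1}$ and amplitude derivatives of polynomial size in $K_\xi^{-1}$ (from the cutoff, whose support has width $\asymp c_3 K_\xi$) both enter, yields a remainder of size $\lambda^{-3/2}(K_\xi)^{-7/2}$. The piece $I_-$ near $x(-\xi)$ is treated identically, by an analogous parametrization about $-\xi$, and produces the second main term plus a remainder of size $\lambda^{-3/2}(K_{-\xi})^{-7/2}$. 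Taken together these contribute at most $\lambda^{-3/2}(\delta_\xi)^{-7/2}$.

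For $I_{\mathrm{reg}}$, I parametrize $\partial\mathcal{B}$ globally by arc length $s$, so that the phase derivative is $\phi'(s) = \langle\vec{t}(s),\xi\rangle$. On the support of $1-\psi_+-\psi_-$, Lemma~\ref{monotonicity} yields $|\langle\vec{n}(x),\xi\rangle|\leq 1-c c_3^2\delta_\xi^4$, whence $|\phi'(s)| = \sqrt{1-\langle\vec{n},\xi\rangle^2}\gtrsim \delta_\xi^2$. Higher arc-length derivatives of $\phi$ and of $n_l$ are uniformly bounded, while derivatives of the cutoffs cost at most $O(\delta_\xi^{-1})$ each. Iterating the integration-by-parts operator $Lg = (g/(2\pi i\lambda\phi'))'$ exactly $N$ times, each step produces a factor $(\lambda\phi')^{-1}\lesssim (\lambda\delta_\xi^2)^{-1}$ together with a derivative whose worst term is $\phi''/(\phi')^2\lesssim\delta_\xi^{-4}$; so the final integrand is $\lesssim \lambda^{-N}\delta_\xi^{-(4N-2)}\leq \lambda^{-N}\delta_\xi^{-4N}$, yielding the claimed bound.

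The main obstacle will be the careful bookkeeping of the $\delta_\xi$ (equivalently $K_{\pm\xi}$) dependence in the stationary-phase remainder: extracting precisely the exponent $-7/2$ requires matching bounds on $(\phi''(0))^{-1}$, on $\phi^{(3)}$ and $\phi^{(4)}$, and on the amplitude's derivatives (whose sizes depend on the width $\asymp c_3 K_\xi$ of the cutoff's support) inside the appendix's stationary-phase error formula, all uniformly in $\xi$ with constants depending only on $N$ and $\mathcal{B}$.
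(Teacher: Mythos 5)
Your decomposition and overall strategy match the paper's: split $n_l$ with cutoffs of scale $\asymp K_{\pm\xi}$ around the two critical points $x(\pm\xi)$ of $u\mapsto\langle \vec{r}(u),\xi\rangle$, run stationary phase on the two local pieces to extract the main terms with a $\lambda^{-3/2}$-order remainder, and run non-stationary integration by parts on the rest using Lemma~\ref{monotonicity} to produce the $\lambda^{-N}(\delta_\xi)^{-4N}$ tail. The non-stationary piece is essentially correct as you wrote it (the paper packages the iteration via Lemma~\ref{app:lemma:2} rather than writing $L$ explicitly, but the count $\lambda^{-N}\delta_\xi^{-4N}$ is the same).

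The gap is in the stationary pieces, and it is precisely the bookkeeping you flag as ``the main obstacle'' but then assert rather than justify. The appendix's only quantitative one-dimensional stationary-phase statement (Lemma~\ref{app:lemma:4}) requires an \emph{exactly quadratic} phase and bounds the remainder via $L^2$ norms of derivatives of the amplitude. That bound is not scale-invariant. If you apply it after only a Morse change of variables $v=v(u)$ (so that $h(u,\xi)=K_\xi v^2/2$), the rescaled amplitude $\tilde\tau(v)=\tau(u(v))u'(v)$ still has derivatives $\lesssim K_\xi^{-j}$ and support of width $\asymp K_\xi$, giving $\|\tilde\tau^{(j)}\|_{L^2}\lesssim K_\xi^{1/2-j}$; plugging into Lemma~\ref{app:lemma:4} with $\tilde\lambda\asymp K_\xi\lambda$ then yields a remainder of order $\lambda^{-3/2}K_\xi^{-4}$, not $\lambda^{-3/2}K_\xi^{-7/2}$. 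The missing idea, present in the paper, is to \emph{first} rescale $u\mapsto K_\xi u$: this normalizes the amplitude (and its derivatives, and its support) to size $O(1)$, and pushes the quadratic coefficient of the phase to $K_\xi^3$, so that the Morse change of variables then produces $\tilde\lambda\asymp K_\xi^3\lambda$ and $\|\tilde\tau^{(j)}\|_{L^2}\lesssim 1$. Only then does Lemma~\ref{app:lemma:4} give $K_\xi\cdot\tilde\lambda^{-3/2}=\lambda^{-3/2}K_\xi^{-7/2}$. Alternatively one could invoke a general stationary-phase error formula involving $a''/\phi''$, $a'\phi'''/(\phi'')^2$, $a(\phi''')^2/(\phi'')^3$, $a\phi^{(4)}/(\phi'')^2$ etc.\ — that route also produces $K_\xi^{-7/2}$ — but no such formula is available in this paper's appendix, so it would need to be stated and proved. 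As it stands, ``tracking the standard error formula'' does not lead to $-7/2$; you need the preliminary rescaling.
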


\begin{proof}[Proof of Theorem \ref{asym-R^2}] As in the proof of Lemma \ref{monotonicity}, the boundary $\partial \mathcal{B}$ in a neighborhood of $x(\xi)$ can be parametrized by \eqref{parametrization} with a uniform upper bound as in \eqref{uniformbound} and we assume $C_0$, $C_1$, $c_3$, and $K_{max}$ are constants appearing there. Let
\begin{equation*}
c_4=\min(\frac{C_0}{K_{max}}, \frac{3}{2C_1(1+C_0)}, 2c_3).
\end{equation*}
Decompose $n_l$ as a sum $n_l=\psi_1+\psi_2+\psi_3$ where
\begin{equation*}
\psi_1(x, \xi)=n_l(x)\chi_0(\frac{x-x(\xi)}{c_4 K_{\xi}}) \textrm{ and } \psi_2(x, \xi)=n_l(x)\chi_0(\frac{x-x(-\xi)}{c_4 K_{-\xi}}).
\end{equation*}

We first estimate $\widehat{\psi_1 ds}$ and by \eqref{parametrization}
\begin{equation}
\widehat{\psi_1 ds}(\lambda \xi)=e^{-2\pi i\lambda \langle \xi, x(\xi) \rangle} \int \tau(u, \xi) e^{2\pi i\lambda h(u, \xi)} \,du, \label{1stpart}
\end{equation}
where $\tau(u, \xi)=\psi_1(\vec{r}(u), \xi)(1+h'_u(u, \xi)^2)^{1/2}$ such that
\begin{equation*}
\tau(\cdot \, , \xi)\in  C^{\infty}_{c}(-c_4 K_{\xi}, c_4 K_{\xi})
\end{equation*}
and
\begin{equation*}
|\partial_{u}^{j}\tau(u, \xi) |\leqslant C(\chi_0, C_1) (c_4 K_{\xi})^{-j}.
\end{equation*}

Denote the integral in \eqref{1stpart} by $\Delta(\xi)$. By a change of variable,
\begin{equation*}
\Delta(\xi)=K_{\xi} \int \tau(K_{\xi}u, \xi) e^{2\pi i\lambda h(K_{\xi}u, \xi)} \,du.
\end{equation*}
By Taylor's formula,
\begin{equation*}
h(K_{\xi}u, \xi)=(K_{\xi})^3 u^2(1+\varepsilon(u, \xi))/2, \qquad  u\in [-c_4, c_4],
\end{equation*}
where $\varepsilon(u, \xi)=u\int_0^1 \partial_u^3 h(K_{\xi}um, \xi) (1-m)^2 \, dm$. Since $1/2 \leqslant 1+\varepsilon(u, \xi)\leqslant 3/2$ (due to $c_4\leqslant 3/(2C_1)$), we can define $v=u(1+\varepsilon(u, \xi))^{1/2}$. Since $\partial_u v(u, \xi)>\sqrt{2}/4$ (due to $c_4\leqslant \min(C_0/K_{max}, 3/2C_1(1+C_0))$), then $u\mapsto v$ is a smooth invertible mapping from $(-c_4, c_4)$ to a neighborhood of $0$ in $v$-space such that $|\partial_u^j v|\leqslant C(C_0, C_1)$, $|\partial_v^j u|\leqslant C(C_0, C_1)$, and
\begin{equation*}
h(K_{\xi}u, \xi)=(K_{\xi})^3 v^2/2.
\end{equation*}
Then
\begin{equation*}
\Delta(\xi)=K_{\xi} \int \tilde{\tau}(v, \xi) e^{i\tilde{\lambda} v^2/2} \,dv,
\end{equation*}
where $\tilde{\lambda}=2\pi (K_{\xi})^3\lambda$ and $\tilde{\tau}(v, \xi)=\tau(K_{\xi}u(v), \xi)\partial_v u$. Applying Lemma \ref{app:lemma:4} (with $k=1$ there) to the integral above yields an asymptotic expansion, which in turn gives
\begin{equation*}
\widehat{\psi_1 ds}(\lambda \xi)=e^{\pi i/4} \xi_l(K_{\xi})^{-1/2}e^{-2\pi i\lambda \langle \xi, x(\xi) \rangle}\lambda^{-1/2} +O(\lambda^{-3/2}(K_{\xi})^{-7/2}),
\end{equation*}
where the implicit constant depends only on $\mathcal{B}$.

Since $\widehat{\psi_2 ds}$ is similar, it remains to estimate $\widehat{\psi_3 ds}$. Assume $\vec{r}: s\in [0, L]\mapsto \vec{r}(s)\in \partial \mathcal{B}$ is a parametrization of $\partial \mathcal{B}$ by arc length and $\vec{r}(0)=x(\xi)$. Then
\begin{equation*}
\widehat{\psi_3 ds}(\lambda \xi)=\int \tau_1(s, \xi) e^{-2\pi i\lambda f(s,\xi)} \,ds,
\end{equation*}
where $\tau_1(s, \xi)=\psi_3(\vec{r}(s), \xi)$ and $f(s,\xi)=\langle \vec{r}(s), \xi \rangle$. Note that
\begin{equation*}
f'_s(s, \xi)=\langle \vec{t}(\vec{r}(s)), \xi \rangle.
\end{equation*}
But $|\langle \vec{t}(\vec{r}(s)), \xi\rangle|^2+|\langle \vec{n}(\vec{r}(s)), \xi\rangle|^2=1$ and Lemma \ref{monotonicity} ($c_4\leqslant 2c_3$) yields, for any $s$ such that $\tau_1(s, \xi)\neq 0$, that
\begin{equation*}
|\langle \vec{n}(\vec{r}(s)), \xi\rangle|\leqslant 1-c c_4^2(\delta_{\xi})^4/4.
\end{equation*}
Hence $|\langle \vec{n}(\vec{r}(s)), \xi\rangle|^2\leqslant 1-c c_4^2(\delta_{\xi})^4/4$. It follows that
\begin{equation*}
|f'_s(s, \xi)|\geqslant \sqrt{c} c_4 (\delta_{\xi})^2/2.
\end{equation*}

Note that $\partial^{j}_s f \lesssim 1$ and $\partial^{j}_s \tau_1 \lesssim (\delta_{\xi})^{-j}$, thus by Lemma \ref{app:lemma:2} we get
\begin{equation*}
\widehat{\psi_3 ds}(\lambda \xi)\lesssim \lambda^{-N}(\delta_{\xi})^{-4N},
\end{equation*}
where the implicit constant depends only on $N$ and $\mathcal{B}$.

\end{proof}

As a consequence of the Gauss--Green formula we get:

\begin{corollary}\label{FTofInd} Let $\mathcal{B}$ be a compact strictly convex planar domain with smooth boundary. For $\xi\in S^1$ with $\delta_{\xi}^{\theta}=\min(K_{\xi}^{\theta}, K_{-\xi}^{\theta})>0$, we have
\begin{align*}
\widehat{\chi}_{\mathcal{B}_{\theta}}(\lambda \xi)&=\big((2\pi)^{-1}e^{3\pi i/4}(K_{\xi}^{\theta})^{-1/2}e^{-2\pi i\lambda H_{\theta}(\xi)}\\
  &\quad+(2\pi)^{-1}e^{-3\pi i/4}(K_{-\xi}^{\theta})^{-1/2}e^{2\pi i\lambda H_{\theta}(-\xi) } \big)\lambda^{-3/2}\\
  &\quad\quad+O(\lambda^{-5/2}(\delta_{\xi}^{\theta})^{-7/2}+\lambda^{-N-1}(\delta_{\xi}^{\theta})^{-4N}) \quad \textrm{for $\lambda >0$},
\end{align*}
where $N\in \mathbb{N}$ and the implicit constant depends only on $N$ and $\mathcal{B}$.
\end{corollary}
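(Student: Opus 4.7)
The plan is to deduce the corollary from Theorem \ref{asym-R^2} (applied to $\mathcal{B}_\theta$, which inherits compactness, strict convexity, and smoothness of the boundary from $\mathcal{B}$) by a standard Gauss--Green integration by parts, followed by consolidation of the resulting boundary asymptotics.

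Concretely, I would first write $\widehat{\chi}_{\mathcal{B}_\theta}(\lambda \xi) = \int_{\mathcal{B}_\theta} e(\langle x, \lambda \xi\rangle)\, dx$ and exploit the pointwise identity
$$e(\langle x, \lambda \xi\rangle) = \frac{i}{2\pi \lambda}\sum_{l=1}^{2} \xi_l\, \partial_{x_l}\, e(\langle x, \lambda \xi\rangle),$$
which follows from $|\xi|=1$ together with the sign convention $e(f)=\exp(-2\pi i f)$. Applying the divergence theorem on the smooth boundary $\partial \mathcal{B}_\theta$ then gives
$$\widehat{\chi}_{\mathcal{B}_\theta}(\lambda \xi) = \frac{i}{2\pi \lambda} \sum_{l=1}^{2} \xi_l\, \widehat{n_l\, ds}(\lambda \xi),$$
where $n_l$ denotes the $l$-th component of the Gauss map of $\partial \mathcal{B}_\theta$.

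Next I would substitute the asymptotic formula of Theorem \ref{asym-R^2}, applied to $\mathcal{B}_\theta$ (so that $K_\xi$, $H$ and the Gauss map are replaced by $K_\xi^\theta$, $H_\theta$ and the Gauss map of $\partial \mathcal{B}_\theta$), into each $\widehat{n_l\, ds}(\lambda \xi)$. The $\xi_l$ factors that appear inside Theorem \ref{asym-R^2} combine with the $\xi_l$ from the integration by parts through $\sum_l \xi_l^2 = 1$, collapsing the two leading terms to
$$\bigl(e^{\pi i/4}(K_\xi^\theta)^{-1/2} e^{-2\pi i \lambda H_\theta(\xi)} - e^{-\pi i/4}(K_{-\xi}^\theta)^{-1/2} e^{2\pi i \lambda H_\theta(-\xi)}\bigr)\lambda^{-1/2}.$$
Multiplying by the prefactor $i/(2\pi \lambda)$ and using $i e^{\pi i/4} = e^{3\pi i/4}$ together with $-i e^{-\pi i/4} = e^{-3\pi i/4}$ recovers the two main terms stated in the corollary, while the error $O(\lambda^{-3/2}(\delta_\xi^\theta)^{-7/2} + \lambda^{-N}(\delta_\xi^\theta)^{-4N})$ of Theorem \ref{asym-R^2}, after being multiplied by $\lambda^{-1}$, becomes precisely the claimed $O(\lambda^{-5/2}(\delta_\xi^\theta)^{-7/2} + \lambda^{-N-1}(\delta_\xi^\theta)^{-4N})$.

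There is no genuine obstacle: once Theorem \ref{asym-R^2} is in hand the deduction is mechanical. The only items requiring care are the phase bookkeeping (keeping track of the signs $\pm$ in front of $\xi_l$ and of the $e^{\pm \pi i/4}$ factors so that the two oscillatory contributions emerge with the correct constants $e^{\pm 3\pi i/4}$) and the remark that the entire geometric setup used in Theorem \ref{asym-R^2}, such as the parametrization and the bounds \eqref{parametrization}--\eqref{uniformbound}, transfers from $\mathcal{B}$ to $\mathcal{B}_\theta = R_\theta \mathcal{B}$ without change since rotation preserves arc length, curvature, and the Gauss map up to the orthogonal conjugation already encoded in Remark \ref{remark1}.
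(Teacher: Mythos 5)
Your proof is correct and takes essentially the same route as the paper: a Gauss--Green integration by parts (the identity $2\pi i\lambda\xi_l\widehat{\chi}_{\mathcal{B}}(\lambda\xi)=-\widehat{n_l\,ds}(\lambda\xi)$) followed by substitution of Theorem~\ref{asym-R^2}, with the phase and error bookkeeping exactly as you describe. The only cosmetic difference is that the paper first uses $\widehat{\chi}_{\mathcal{B}_{\theta}}(\lambda\xi)=\widehat{\chi}_{\mathcal{B}}(\lambda R_{\theta}^{t}\xi)$ to reduce to $\mathcal{B}$ itself, whereas you apply Theorem~\ref{asym-R^2} directly to $\mathcal{B}_\theta$ and note separately that its constants are $\theta$-independent; the two versions are interchangeable.
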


\begin{remark} In \S \ref{mainterm}, we will apply this result ($N=1$) to convex planar domains of finite type. The error term becomes $O(\lambda^{-2}(\delta_{\xi}^{\theta})^{-4})$.
\end{remark}

\begin{proof}[Proof of Corollary \ref{FTofInd}] This result follows easily from
\begin{equation*}
\widehat{\chi}_{\mathcal{B}_{\theta}}(\lambda \xi)=\widehat{\chi}_{\mathcal{B}}(\lambda R_{\theta}^{t}\xi),
\end{equation*}
\begin{equation*}
2\pi i\lambda \xi_{l}\widehat{\chi}_{\mathcal{B}}(\lambda \xi)=-\widehat{n_l ds}(\lambda \xi),
\end{equation*}
and Theorem \ref{asym-R^2}.
\end{proof}

\section{Estimate of Exponential Sums}\label{proof-claims}

Let $M_*>1$ and $T>0$ be parameters. In this section we consider two-dimensional exponential sums of the form
\begin{equation*}
S(T, M_*; G, F)=\sum_{m\in\mathbb{Z}^2} G(\frac{m}{M_*}) e(TF(\frac{m}{M_*})),
\end{equation*}
where $G:\mathbb{R}^2\rightarrow\mathbb{R}$ is $C^{\infty}$ smooth, compactly supported, and bounded above by a constant, and
$F:\Omega\subset \mathbb{R}^2 \rightarrow\mathbb{R}$ is $C^{\infty}$ smooth on an open convex domain $\Omega$ such that
\begin{equation*}
\textrm{supp} (G)\subset \Omega \subset c_0 B(0, 1),
\end{equation*}
where  $c_0>0$ is a fixed constant. Here we quote Lemma 2.2 in Guo~\cite{guo} ($d=2$), a variant of Lemma 1 in M\"{u}ller~\cite{mullerII}.

\begin{lemma}\label{weyl-van-inequality}
Let $q\in \mathbb{N}$, $Q=2^q$, and $r_1, \ldots, r_q\in \mathbb{Z}^2$ be nonzero integral vectors with $|r_i|\lesssim 1$. Furthermore, let $H$ be a real parameter which satisfies $1<H\lesssim M_*$. Set $H_l=H_{q,l}=H^{2^{l-q}}$ for $l=1, \ldots, q$. Then
\begin{equation*}
|S(T, M_*; G, F)|^{Q}\lesssim \frac{M_*^{2Q}}{H} + \frac{M_*^{2(Q-1)}}{H_1\cdot \ldots \cdot H_q} \sum_{\substack{1\leqslant h_i<H_i \\1 \leqslant i \leqslant
q}} |S(\mathscr{H}TM_*^{-q}, M_*; G_q, F_q)|,
\end{equation*}
where $\mathscr{H}=\prod_{l=1}^{q}h_l$ and functions $G_q$, $F_q$ are defined as follows:
\begin{equation*}
G_q(x)=G_q(x, h_1, \ldots, h_q)=\prod_{\substack{u_i\in\{0,1\}\\
1\leqslant i\leqslant q}} G(x+\sum_{l=1}^{q} \frac{h_l}{M_*} u_l r_l)
\end{equation*}
and
\begin{align*}
F_q(x)&=F_q(x, h_1, \ldots, h_q)\\
      &=\int_{(0,1)^q} \langle r_1, \nabla\rangle \cdots \langle r_q, \nabla\rangle F
       (x+\sum_{l=1}^{q} \frac{h_l}{M_*} u_l r_l) \, du_1\ldots du_q.
\end{align*}

The integral representation of $F_q$ is well defined on
the open convex set $\Omega_q=\Omega_q(h_1, \ldots,
h_q)=\{x\in \Omega : x+\sum_{l=1}^{q} (h_l/M_*) u_l r_l \in
\Omega \textrm{ for all } u_l\in \{0,1\}, l=1, \ldots, q\}$. And supp$(G_q)\subset \Omega_q\subset \Omega$.
\end{lemma}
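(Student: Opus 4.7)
The plan is to prove the lemma by induction on $q$, with the base case $q=1$ being the classical two-dimensional van der Corput $B$-process differenced in the direction $r_1$, and the inductive step applying the hypothesis for $q-1$ followed by one more $q=1$ differencing in direction $r_q$.

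For the base case, fix $H=H_1$ and shift the summation variable in $S=S(T,M_*;G,F)$ by $hr_1$ for each integer $0\leqslant h<H$. Since $m$ runs over all of $\mathbb{Z}^2$ each such shift is an exact identity, so averaging gives
\begin{equation*}
HS=\sum_m\sum_{h=0}^{H-1}G\!\left(\tfrac{m+hr_1}{M_*}\right)e\!\left(TF\!\left(\tfrac{m+hr_1}{M_*}\right)\right).
\end{equation*}
Because $|r_1|\lesssim 1$, the outer index $m$ is effectively restricted to a set of cardinality $O(M_*^2)$, so Cauchy--Schwarz gives $H^2|S|^2\lesssim M_*^2$ times the sum of the squared inner sums. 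Expanding the square and relabeling $k=h-h'$ separates the diagonal $k=0$ contribution of size $O(HM_*^2)$ from off-diagonal terms indexed by $1\leqslant k<H$. The fundamental theorem of calculus rewrites each off-diagonal phase difference as $(kT/M_*)F_1(m/M_*,k)$ with $F_1$ as in the statement, and the two $G$-factors combine into $G_1(m/M_*,k)$; dividing by $H^2$ yields the claim for $q=1$.

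For the inductive step, apply the hypothesis for $q-1$ with parameter $H'=H^{1/2}$, noting that the intermediate step parameters then satisfy $H'_l=(H')^{2^{l-(q-1)}}=H^{2^{l-q}}=H_l$ for $l=1,\dots,q-1$, matching the lemma's indexing. Squaring the resulting bound and applying Cauchy--Schwarz in the $(h_1,\dots,h_{q-1})$-tuple gives
\begin{equation*}
|S|^{Q}\lesssim \frac{M_*^{2Q}}{H}+\frac{M_*^{2Q-4}}{H_1\cdots H_{q-1}}\sum_{h_1<H_1,\dots,h_{q-1}<H_{q-1}}\!|S^{(q-1)}|^2,
\end{equation*}
where $S^{(q-1)}=S(\mathscr{H}'TM_*^{-(q-1)},M_*;G_{q-1},F_{q-1})$ with $\mathscr{H}'=h_1\cdots h_{q-1}$. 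Apply the $q=1$ case to each $|S^{(q-1)}|^2$ in direction $r_q$ with step $H_q=H$. The new phase coefficient is $\mathscr{H}'TM_*^{-(q-1)}\cdot h_q/M_*=\mathscr{H}TM_*^{-q}$, and one more integration $\int_0^1\langle r_q,\nabla\rangle\,du_q$ combines with those already inside $F_{q-1}$, by commutativity of partials, to produce precisely $F_q$ of the statement; analogous telescoping yields $G_q$. The two resulting main terms are both of size $M_*^{2Q}/H$ (since $H_q=H$ and $(H')^2=H$) and the remaining sum has the advertised prefactor $M_*^{2(Q-1)}/(H_1\cdots H_q)$, completing the induction. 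The inclusion $\supp(G_q)\subset\Omega_q\subset\Omega$ and the convexity of $\Omega_q$ are immediate from the product definition of $G_q$.

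The main obstacle is essentially bookkeeping: at each application of Cauchy--Schwarz and each differencing one must verify that the two ``main'' error terms produced can be balanced into a single term of size $M_*^{2Q}/H$ and that the nested iterated integrals telescope into the symmetric expression for $F_q$. Once the recursion $(H')^2=H$ is fixed, the consistency of $H_l=H^{2^{l-q}}$ throughout the induction is forced, and no analytic subtlety beyond the base $q=1$ van der Corput step remains.
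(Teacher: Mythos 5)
The paper does not prove this lemma; it quotes it verbatim from Guo (Lemma 2.2, for $d=2$), itself a variant of M\"uller's Lemma 1, so there is no in-paper argument to compare against. Your inductive proof is correct and is the standard route to this iterated Weyl--van der Corput inequality: the base case $q=1$ is the shift-and-Cauchy--Schwarz differencing, the recursion $H'=H^{1/2}$ makes the intermediate parameters $H_l'$ match $H_l$, the telescoping $(G_{q-1})_1=G_q$, $(F_{q-1})_1=F_q$ holds by the product structure of $G_q$ and by interchanging $\langle r_q,\nabla\rangle$ with the inner integrals, and the two resulting main terms, $M_*^{2Q}/(H')^2$ from squaring the inductive bound and $M_*^{2(Q-2)+4}/H_q$ from the base-case diagonal, both collapse to $M_*^{2Q}/H$, which matches what the cited references establish.
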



\begin{proposition}\label{claim1} Let $q\in \mathbb{N}$, $Q=2^q$, and $K<1$ be a positive parameter. Assume that
\begin{equation}
\dist(\supp(G), \Omega^{c})\gtrsim K^{4q+2}, \label{bdy-dist}
\end{equation}
and that for all $\nu\in \mathbb{N}_0^2$ and $y\in \Omega$
\begin{equation}
D^{\nu}G(y)\lesssim K^{-(4q+2)|\nu|}, \label{aaa}
\end{equation}
\begin{equation}
D^{\nu}F(y)\lesssim
\bigg\{ \begin{array}{ll}
1 & \textrm{if $0\leqslant |\nu|\leqslant 1$}\\
K^{3-2|\nu|} & \textrm{if $|\nu|\geqslant 2$}
\end{array},  \label{upperforf}
\end{equation}
and for $\mu=(1, q-1)$
\begin{equation}
\big|\det( \nabla^2 D^{\mu}F(y) )\big|\gtrsim K^{-2}.   \label{lowerforf}
\end{equation}

If $M_*\geqslant K^{-4q-2}$ and $T$ is restricted to
\begin{equation}
T\geqslant K^{(8q+4)/Q-5}M_*^{q-1+2/Q}, \label{restriction}
\end{equation}
then
\begin{equation}
S(T, M_*; G, F)\lesssim (K^{-12q-1}TM_*^{6Q-q-6})^{1/(3Q-2)}+R, \label{kkk}
\end{equation}
where
\begin{equation}
\begin{split}
R=&K^{-(20q+4)/Q}M_*^{2-2/Q}[K^{-(12q+4)/Q}+\\
  &(K^{12q+1}T^{-1}M_*^{q+2})^{1/(3Q-2)} (  (K^{-20q-7}T^{-1}M_*^{q})^{1/Q}+(\log M_*)^{1/Q})]
\end{split}\label{longremainder}
\end{equation}

The constant implicit in \eqref{kkk} depends only on $q$, $c_0$, and constants implicit in \eqref{bdy-dist}, \eqref{aaa}, \eqref{upperforf}, and \eqref{lowerforf}.
\end{proposition}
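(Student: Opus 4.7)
The plan is to carry out the two-dimensional Weyl--van der Corput plus B-process strategy of M\"uller~\cite{mullerII} and Guo~\cite{guo}, keeping the dependence on $K$ explicit at every stage. The key observation is that the nondegeneracy condition \eqref{lowerforf} refers to the mixed derivative $D^{\mu}F$ with $\mu=(1,q-1)$ rather than to $F$ itself; $q$ rounds of differencing are exactly what bring this mixed derivative to the front, at the cost of raising $|S|$ to the power $Q=2^q$.

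To start, I would apply Lemma~\ref{weyl-van-inequality} with $r_1=e_1$ and $r_2=\cdots=r_q=e_2$, so that $\langle r_1,\nabla\rangle\cdots\langle r_q,\nabla\rangle=D^{\mu}$ and $F_q(x)$ is the $q$-fold average of $D^{\mu}F$ over a small cube. By \eqref{upperforf}, \eqref{lowerforf}, and Taylor expansion one obtains $\|D^{\nu}F_q\|\lesssim K^{3-2(q+|\nu|)}$ together with the nondegeneracy $|\det\nabla^{2}F_q|\gtrsim K^{-2}$ uniformly on the relevant subdomain. The hypothesis \eqref{bdy-dist}, together with the standing bound $M_*\geqslant K^{-4q-2}$ and a mild upper constraint $H\lesssim K^{8q+2}M_*$ on the differencing parameter, ensures that all shifted arguments remain in $\Omega$ so that $F_q$ is well defined and $G_q$ inherits from \eqref{aaa} the estimates $\|D^{\nu}G_q\|\lesssim K^{-(4q+2)|\nu|}$.

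Next, for each fixed $(h_1,\dots,h_q)$, I would bound the inner sum $S(\mathscr{H}TM_*^{-q},M_*;G_q,F_q)$ by the two-dimensional B-process. Poisson summation rewrites it as a sum over $k\in\mathbb{Z}^{2}$ of oscillatory integrals with phase $T'F_q(y)-M_*\langle k,y\rangle$, where $T'=\mathscr{H}TM_*^{-q}$; this phase has a nondegenerate Hessian of determinant size $\asymp T'^{2}K^{-2}$, so the two-dimensional stationary-phase formula yields a leading contribution $\lesssim M_*^{2}K/T'$ per critical $k$, and the critical $k$'s are confined to $|k|\lesssim T'K^{1-2q}/M_*$ because $\|\nabla F_q\|\lesssim K^{1-2q}$. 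The subleading terms of the stationary-phase expansion (which scale with $\|D^{2}G_q\|\lesssim K^{-8q-4}$ and $\|D^{4}F_q\|\lesssim K^{-5-2q}$) and the non-critical $k$'s, handled by repeated integration by parts via Lemma~\ref{app:lemma:2}, account jointly for the full factor $K^{-12q-1}$ appearing in \eqref{kkk} and for the $K^{-20q-7}T^{-1}M_*^{q}$ and $\log M_*$ factors in $R$. Hypothesis \eqref{restriction} is exactly the quantitative condition that makes the critical-point contribution dominate the trivial $M_*^{2}$ bound on each Fourier integral.

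Inserting these estimates into Lemma~\ref{weyl-van-inequality} yields an inequality of the form $|S|^{Q}\lesssim M_*^{2Q}/H+A\,H^{2-2/Q}$ for a quantity $A$ polynomial in $T$, $K$, $M_*$; balancing the two terms produces $H$ of magnitude $(M_*^{2Q}/A)^{Q/(3Q-2)}$, and taking the $Q$-th root yields the exponent $1/(3Q-2)$ visible in \eqref{kkk}. The main obstacle, in my view, is the careful bookkeeping of the powers of $K$ through the full stationary-phase expansion---it is the subleading terms, not the leading one, that are responsible for the exponent $-12q-1$---combined with the verification that the optimum $H$ falls inside the admissible range $1<H\lesssim K^{8q+2}M_*$; it is precisely this last admissibility check that forces the lower bound \eqref{restriction} on $T$.
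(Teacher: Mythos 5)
Your overall strategy is the same as the paper's: apply Lemma~\ref{weyl-van-inequality} with $r_1=e_1$, $r_2=\dots=r_q=e_2$; Poisson-sum the differenced sum; handle the dual sum by dividing it into a critical range of $p$'s treated by stationary phase and a non-critical range treated by Lemma~\ref{app:lemma:2}; and balance $H$ at the end. But your bookkeeping of the powers of $K$ has a genuine gap. For each $p$ in the critical range, the phase $T'F_q(y)-M_*\langle p,y\rangle$ can have \emph{many} critical points inside $\supp(G_q)$, because $\nabla F_q$ is only locally (not globally) injective. The paper rescales by $K^2M_*$, invokes the quantitative inverse function theorem (Lemma~\ref{app:lemma:1}) to show $\nabla F_q$ is bijective on balls of radius $\asymp K^{4q}$, and then counts $\lesssim K^{-8q-4}$ critical points, introducing a partition of unity $\{\chi_j\}$ around them. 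This factor $K^{-8q-4}$ is essential: together with the Sogge--Stein bound $\lambda_1^{-1}K^{-2q}$ per critical point and the $p$-range count $(K^{1-2q}T'/M_*)^2$ it produces exactly $K^{-12q-1}$. You write that ``the subleading terms of the stationary-phase expansion... account jointly for the full factor $K^{-12q-1}$'' and that ``it is the subleading terms, not the leading one, that are responsible for the exponent $-12q-1$'', but the paper never needs a full expansion (it uses the single bound of Lemma~\ref{app:lemma:3}); the missing factor in your accounting is the critical-point multiplicity, not subleading terms. Without that count your main term would be short by $K^{-8q-4}$.

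Two smaller inaccuracies: the $\log M_*$ factor does not come from the non-critical $k$'s --- those are handled by integration by parts and contribute to $R_5$, $R_6$, which are power-saving; the logarithm arises from the outer sum $\sum_{h_l<H_l}\mathscr{H}^{-1}$ in Lemma~\ref{weyl-van-inequality}. And the admissible range of the differencing parameter is $1<H\lesssim K^{4q+2}M_*$, matching the support-distance hypothesis~\eqref{bdy-dist}, not $K^{8q+2}M_*$; the former is what, upon substituting the balancing choice of $H$, yields precisely the lower bound~\eqref{restriction} on $T$.
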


\begin{remark} This result is similar with Theorem 2 in \cite{mullerII} and Proposition 2.4, 2.5 in \cite{guo}, but here there is an extra parameter $K$ in various bounds.
\end{remark}

\begin{proof}[Proof of Proposition \ref{claim1}] Let $1<H\leqslant c_5 K^{4q+2}M_*$ with $c_5<1$ chosen (later) to be sufficiently small. Then $H\leqslant M_*$.  We use Lemma \ref{weyl-van-inequality} with $r_1=e_1$ and $r_j=e_2$ ($j=2, \ldots, q$). Applying to $S_4:=S(\mathscr{H}TM_*^{-q}, M_*; G_q, F_q)$ the Poisson summation formula followed by a change of variables $x=K^2 M_* z$ yields
\begin{align*}
S_4 &=\sum_{p\in \mathbb{Z}^2}\int_{\mathbb{R}^2} G_q(x/M_*)e(\mathscr{H}TM_*^{-q}F_q(x/M_*)-\langle p, x\rangle) \,dx\\
    &=\sum_{p\in \mathbb{Z}^2} K^{4}M_*^2\int_{\mathbb{R}^2} \Psi_q(z)e(\mathscr{H}TM_*^{-q}F_q(K^2 z)-K^{2}M_* \langle p, z\rangle)\,dz,
\end{align*}
where $\Psi_q(z)=G_q(K^{2}z)$. It is obvious that
\begin{equation}
\supp(\Psi_q)\subset K^{-2}\Omega_q \subset c_0 K^{-2}B(0, 1). \label{domain1}
\end{equation}
By \eqref{bdy-dist} we also have
\begin{equation}
\dist(\supp(\Psi_q), (K^{-2}\Omega_q)^{c})\gtrsim K^{4q}. \label{domain2}
\end{equation}

By the assumption \eqref{upperforf} there exists a constant $A_1$ such that
\begin{equation*}
|\nabla_z(F_q(K^{2}z))|\leqslant (A_1/2) K^{3-2q}.
\end{equation*}
We split $S_4$ into two parts
\begin{equation*}
S_4=\sum_{|p|<A_1K^{1-2q}\mathscr{H}TM_*^{-q-1}}+\sum_{|p|\geqslant A_1K^{1-2q}\mathscr{H}TM_*^{-q-1}}=\textrm{:}S_5+R_5.
\end{equation*}

We will prove the following lemma (later) by integration by parts:
\begin{lemma}\label{R5}
\begin{equation*}
R_5\lesssim K^{-12q-6}M_*^{-1}.
\end{equation*}
\end{lemma}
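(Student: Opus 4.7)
The plan is to bound each Fourier-analytic integral
\begin{equation*}
I(p)=\int_{\mathbb{R}^2}\Psi_q(z)\,e\bigl(\mathscr{H}TM_*^{-q}F_q(K^2 z)-K^2 M_*\langle p,z\rangle\bigr)\,dz
\end{equation*}
by integrating by parts three times, and then to sum the resulting bounds over $p\in\mathbb{Z}^2$ with $|p|\geqslant R:=A_1 K^{1-2q}\mathscr{H}TM_*^{-q-1}$. The required input on the phase $\Phi(z)=\mathscr{H}TM_*^{-q}F_q(K^2 z)-K^2 M_*\langle p,z\rangle$ was recorded in the paragraph preceding the lemma: for $|p|\geqslant R$, the choice of $A_1$ yields $|\nabla_z\Phi(z)|\geqslant K^2 M_*|p|/2$ on $\supp\Psi_q$. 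To set up the integration by parts I would collect three further ingredients: (i) $|D^\nu\Psi_q(z)|\lesssim K^{-4q|\nu|}$, from the chain rule applied to $\Psi_q(z)=G_q(K^2 z)$ together with \eqref{aaa}; (ii) $|D^\nu_z\Phi(z)|\lesssim \mathscr{H}TM_*^{-q}K^{3-2q}$ for every $|\nu|\geqslant 2$, by \eqref{upperforf} applied at order $|\nu|+q$ and the chain rule; (iii) $\supp\Psi_q\subset c_0 K^{-2}B(0,1)$, of two-dimensional Lebesgue measure $\lesssim K^{-4}$.

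Next I would apply the standard non-stationary-phase operator $Lg=(2\pi i)^{-1}\nabla_z\cdot\bigl(g\,\nabla_z\Phi/|\nabla_z\Phi|^2\bigr)$, which satisfies $\int g\cdot e(\Phi)\,dz=\int Lg\cdot e(\Phi)\,dz$ with no boundary term since $\supp L^j\Psi_q\subseteq\supp\Psi_q$. An induction on $N$ yields the pointwise estimate
\begin{equation*}
|L^N\Psi_q(z)|\lesssim \bigl(K^{-4q-2}/(M_*|p|)\bigr)^N\qquad\text{on }\supp\Psi_q.
\end{equation*}
The induction compares two kinds of terms produced at each step: differentiating the amplitude gives a contribution of size $|\nabla\Psi_q|/|\nabla\Phi|\lesssim K^{-4q-2}/(M_*|p|)$, while differentiating $\nabla\Phi/|\nabla\Phi|^2$ gives a contribution of size $|\Psi_q||D^2\Phi|/|\nabla\Phi|^2$, whose ratio to the former is
\begin{equation*}
\frac{|\Psi_q||D^2\Phi|}{|\nabla\Psi_q||\nabla\Phi|}\lesssim \frac{\mathscr{H}T K^{1+2q}M_*^{-q-1}}{|p|}\leqslant \frac{1}{A_1}
\end{equation*}
on the tail range $|p|\geqslant R$. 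Since the bounds (i) and (ii) are uniform in the order $|\nu|$, the same comparison carries through the iteration.

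Taking $N=3$ and integrating over the support yields
\begin{equation*}
|I(p)|\lesssim K^{-4}\cdot K^{-12q-6}M_*^{-3}|p|^{-3}=K^{-12q-10}M_*^{-3}|p|^{-3}.
\end{equation*}
The convergence of the two-dimensional series $\sum_{p\in\mathbb{Z}^2\setminus\{0\}}|p|^{-3}$ gives $\sum_{|p|\geqslant R}|p|^{-3}\lesssim 1$, so
\begin{equation*}
R_5\leqslant K^4 M_*^2\sum_{|p|\geqslant R}|I(p)|\lesssim K^4 M_*^2\cdot K^{-12q-10}M_*^{-3}=K^{-12q-6}M_*^{-1},
\end{equation*}
which is the claimed bound.

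The principal technical point is verifying that the induction step really does propagate the bound $(K^{-4q-2}/(M_*|p|))^N$ without loss: each application of $L$ can in principle produce subsidiary terms involving higher derivatives of $\Phi$ or of $|\nabla\Phi|^{-2}$, and one must check that the cutoff $|p|\geqslant R$ is precisely what is needed to keep all such terms controlled by the dominant amplitude-derivative contribution. Once that is done, the rest of the proof is direct bookkeeping.
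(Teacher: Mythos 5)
Your proof is correct and takes essentially the same route as the paper: identify the gradient lower bound $|\nabla_z\Phi|\gtrsim K^2 M_*|p|$ in the tail range, integrate by parts $N=3$ times using the derivative bounds $|D^\nu\Psi_q|\lesssim K^{-4q|\nu|}$ and the uniform bound on $D^\nu\Phi$, and sum $|p|^{-3}$ over $\mathbb{Z}^2$. The paper simply normalizes the phase by $\lambda_2=M_*|p|$ (so that $|\nabla\Gamma_q|\gtrsim K^2$ with oscillation parameter $\lambda_2$) and then invokes the H\"ormander integration-by-parts lemma (Lemma~\ref{app:lemma:2}) as a black box, whereas you construct the non-stationary-phase operator $L$ and sketch the induction directly; the per-$p$ bound $K^{-(4q+2)N-4}M_*^{-N}|p|^{-N}$ comes out identically.
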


Next we will estimate $S_5$. Define $\lambda_1=K^{3-2q}\mathscr{H}TM_*^{-q}$ and
\begin{equation*}
\Phi_q(z, p)=(\mathscr{H}TM_*^{-q}F_q(K^2 z)-K^{2}M_* \langle p, z\rangle)/\lambda_1,
\end{equation*}
then
\begin{equation}
S_5=K^{4}M_*^2 \sum_{|p|<A_1K^{1-2q}\mathscr{H}TM_*^{-q-1}} \int_{\mathbb{R}^2} \Psi_q(z)e(\lambda_1 \Phi_q(z, p))\,dz. \label{S5}
\end{equation}

For all $z\in K^{-2}\Omega_q$, by \eqref{aaa}, \eqref{upperforf}, and \eqref{lowerforf},
\begin{equation}
D^{\nu}_z\Psi_q(z)\lesssim K^{-4q|\nu|}, \label{ff}
\end{equation}
\begin{equation}
D^{\nu}_z \Phi_q(z, p)\lesssim \bigg\{
\begin{array}{ll}
K^{-2} & \textrm{for $\nu=0$}\\
1 & \textrm{for $|\nu|\geqslant 1$}
\end{array},   \label{gg}
\end{equation}
and
\begin{equation}
|\det\big(\nabla^2_{zz}\Phi_q(z, p) \big)|\gtrsim K^{4q}. \label{cc}
\end{equation}

To prove this lower bound \eqref{cc} we first note, by using the definition of $F_q$ and the mean value theorem, that for $\mu=(1, q-1)$
\begin{equation*}
\frac{\partial^2}{\partial z_{l_1} \partial z_{l_2}} (\Phi_q(z, p))=K^{2q+1}\frac{\partial^{2} D^{\mu}F}{\partial x_{l_1} \partial x_{l_2}}(K^{2}z)+O(K^{-2}\frac{H}{M_*}).
\end{equation*}
The two terms on the right are $\lesssim$ $1$ and $c_5 K^{4q}$ respectively. Thus
\begin{equation*}
\det(\nabla^2_{zz}(\Phi_q(z, p)))=K^{4q+2}\det(\nabla^2D^{\mu}F(K^{2}z))+O(c_5 K^{4q}).
\end{equation*}
By \eqref{lowerforf}, we get \eqref{cc} if we pick a sufficiently small $c_5$.

With \eqref{domain1}, \eqref{domain2}, \eqref{ff}, \eqref{gg}, and \eqref{cc}, we can estimate the integrals in sum $S_5$. Let us fix an arbitrary $|p|<A_1K^{1-2q}\mathscr{H}TM_*^{-q-1}$. We first estimate the number of critical points of the phase function $\Phi_q$. Denote $\widetilde{p}=K^{2}M_*p/\lambda_1$ and $F(z)=K^{2q-3}\nabla_z(F_q(K^{2}z))$, then $\nabla_z \Phi_q(z, p)=F(z)-\widetilde{p}$ and critical points are determined by the equation
\begin{equation*}
F(z)=\widetilde{p} \quad \textrm{for $z\in K^{-2}\Omega_q$}.
\end{equation*}

The bounds \eqref{gg} and \eqref{cc} imply that the mapping $F=(F_1, F_2)$ satisfies
\begin{equation*}
D^{\nu}F_j(z)\lesssim 1   \quad \textrm{for $|\nu|\leqslant 2$, $j=1$, $2$},
\end{equation*}
and
\begin{equation*}
|\det(\nabla_z F(z))|\gtrsim K^{4q}.
\end{equation*}

By \eqref{domain2}, we know that $\supp(\Psi_q)$ is strictly smaller than $K^{-2}\Omega_q$ and the distance between their boundary is larger than $a_1 K^{4q}$ for some positive constant $a_1$. Let $r_0=a_1 K^{4q}/2$. By Taylor's formula, there exists a positive constant $a_2$ ($<a_1/2$) such that if $\tilde{z}$ is a critical point in $(\supp(\Psi_q))_{(r_0)}$ then
\begin{equation}
|\nabla_z \Phi_q(z, p)|\gtrsim K^{4q}|z-\tilde{z}|, \quad \textrm{for any $z\in B(\tilde{z}, a_2 K^{4q})$}. \label{taylor}
\end{equation}

Applying Lemma \ref{app:lemma:1} to $F$ with $r_0$ as above yields two positive constants $a_3$ ($<a_2/2$) and $a_4$ such that if $r_1=a_3 K^{4q}$, $r_2=a_4 K^{8q}$, then $F$ is bijective from $B(z, 2r_1)$ to an open set containing $B(F(z), 2r_2)$ for any $z\in (\supp(\Psi_q))_{(r_0)}$. It follows, simply by a size estimate, that the number of critical points in $(\supp(\Psi_q))_{(r_1)}$ is $\lesssim$ $(K^{-2}/r_1)^{2}\lesssim K^{-8q-4}$ .

Let $Z_j$ ($j=1, \ldots, J(p)$) be all critical points in $(\supp(\Psi_q))_{(r_1)}$ corresponding to the $p$ we fixed. Let $\chi_j(z)=\chi_0((z-Z_j)/(c_6 r_1))$ with $c_6$ chosen (below) to be sufficiently small, then
\begin{equation}
\int \Psi_q(z)e(\lambda_1 \Phi_q(z, p))\,dz =\sum_{j=1}^{J(p)}\int \chi_j(z) \Psi_q(z)e(\lambda_1 \Phi_q(z, p))\,dz+R_6,\label{S5-1}
\end{equation}
where
\begin{equation*}
R_6=\int [1-\sum_{j=1}^{J(p)}\chi_j(z)] \Psi_q(z)e(\lambda_1 \Phi_q(z, p))\,dz.
\end{equation*}

For each $j=1, \ldots, J(p)$, we consider a new phase function $\phi_j(z, p)=\Phi_q(z, p)-\Phi_q(Z_j, p)$ satisfying $D_z^{\nu}\phi_j(z, p)\lesssim 1$. By Lemma \ref{app:lemma:3} (with $\lambda=\lambda_1$, $\delta=K^{4q}$), if $c_6$ is sufficiently small then
\begin{align}
\big|\int \chi_j(z) &\Psi_q(z)e(\lambda_1 \Phi_q(z, p))\,dz\big| \nonumber \\
  &=\big|\int \chi_j(z) \Psi_q(z)e(\lambda_1 \phi_j(z, p))\,dz\big|\lesssim \lambda_1^{-1} K^{-2q}.\label{S5-2}
\end{align}

We will prove (later), by integration by parts, that
\begin{lemma}\label{R6}
\begin{equation}
R_6\lesssim K^{-32q-8}\lambda_1^{-2}. \label{boundR6}
\end{equation}
\end{lemma}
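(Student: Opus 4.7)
The plan is to bound $R_6$ by integrating by parts in the oscillatory integral, after establishing a uniform lower bound on $|\nabla_z \Phi_q(z,p)|$ throughout the support of the integrand $a(z) := [1-\sum_j \chi_j(z)]\Psi_q(z)$, which is contained in $\supp(\Psi_q) \setminus \bigcup_{j=1}^{J(p)} B(Z_j, c_6 r_1/2)$.

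First I would show that on this set, $|\nabla_z \Phi_q(z,p)| \gtrsim K^{8q}$. Split into two cases. If $z \in B(Z_j, a_2 K^{4q})$ for some $j$, then \eqref{taylor} combined with $|z - Z_j| \geq c_6 r_1/2 \asymp K^{4q}$ yields $|\nabla_z \Phi_q(z,p)| \gtrsim K^{4q}\cdot K^{4q} = K^{8q}$. If $z$ lies outside every $B(Z_j, a_2 K^{4q})$, I would exploit $\nabla_z \Phi_q(z,p) = F(z) - \tilde p$ together with the bijectivity from Lemma \ref{app:lemma:1}: $F$ maps $B(z, 2r_1)$ bijectively onto a set containing $B(F(z), 2r_2)$, with $r_2 \asymp K^{8q}$. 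A hypothetical value $|\nabla_z \Phi_q(z,p)| < 2r_2$ would force $\tilde p \in F(B(z,2r_1))$, producing a critical point $z^* \in B(z, 2r_1)$. Since $2r_1 < a_2 K^{4q}$ (as $a_3 < a_2/2$) and $z \in \supp(\Psi_q)$, a careful check at the relevant scales shows that $z^*$ must lie in $(\supp(\Psi_q))_{(r_1)}$ and thus coincide with some $Z_j$, contradicting $|z-Z_j| > a_2 K^{4q} > 2r_1$. Hence $|\nabla_z \Phi_q(z,p)| \geq 2r_2 \gtrsim K^{8q}$ in this case as well.

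With the lower bound in hand, I would integrate by parts twice, using the transposed operator $L^t b := (-2\pi i \lambda_1)^{-1} \nabla \cdot(b\, \nabla \Phi_q / |\nabla \Phi_q|^2)$ that satisfies $e(\lambda_1 \Phi_q) = L^t$-dual of $e(\lambda_1 \Phi_q)$. The amplitude satisfies $|D^\nu a| \lesssim K^{-4q|\nu|}$ (by \eqref{ff} and the fact that each cut-off $\chi_j$ lives on scale $c_6 r_1 \asymp K^{4q}$), and derivatives of $\Phi_q$ of order at least one are $\lesssim 1$ by \eqref{gg}. Each application of $L^t$ therefore costs at worst a factor of $\lambda_1^{-1} |\nabla \Phi_q|^{-2} \lesssim \lambda_1^{-1} K^{-16q}$ (using $K<1$ so this dominates the mixed term $\lambda_1^{-1} K^{-4q}\cdot K^{-8q}$), giving $|(L^t)^2 a| \lesssim \lambda_1^{-2} K^{-32q}$. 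Since $\supp(\Psi_q) \subset c_0 K^{-2} B(0,1)$ has measure $\lesssim K^{-4}$, we obtain $|R_6| \lesssim K^{-32q-4}\lambda_1^{-2}$, which is stronger than the claimed bound \eqref{boundR6}.

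The main technical obstacle is the second case of the lower-bound step: one must balance the radii $c_6 r_1$, $r_1$, $2r_1$, $a_2 K^{4q}$, $r_0$, and $r_2$ so that the critical point $z^*$ extracted from the bijectivity argument genuinely lies in $(\supp(\Psi_q))_{(r_1)}$ and is thus one of the enumerated $Z_j$'s, rather than a spurious critical point just outside. The integration-by-parts bookkeeping in the second step---tracking powers of $K$ through the divergence operator and invoking $K<1$ to collapse the mixed terms to the worst power of $|\nabla \Phi_q|^{-1}$---is routine.
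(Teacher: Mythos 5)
Your proof is correct and takes essentially the same route as the paper's: both establish $|\nabla_z\Phi_q|\gtrsim K^{8q}$ on $\supp(u)$ via \eqref{taylor} together with the bijectivity from Lemma~\ref{app:lemma:1} (the paper merely cites \cite{guo}, Prop.~2.4 for this, and the radius bookkeeping you flag is cured simply by letting the $Z_j$ range over critical points in $(\supp(\Psi_q))_{(2r_1)}$, which leaves the count $J(p)\lesssim K^{-8q-4}$ unchanged), and then integrate by parts twice. The only difference is presentational: the paper rescales to $g=K^2\Phi_q$, $\lambda_3=K^{-2}\lambda_1$ and invokes Lemma~\ref{app:lemma:2} with $N=2$, whereas your explicit two applications of the $L^t$ operator give the same conclusion (indeed the marginally sharper $K^{-32q-4}\lambda_1^{-2}$).
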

Using \eqref{S5}, \eqref{S5-1}, \eqref{S5-2}, and \eqref{boundR6}, we get
\begin{align*}
S_5&\lesssim K^{4}M_*^2\big(1+(A_1K^{1-2q}\mathscr{H}TM_*^{-q-1})^2\big) \big(\frac{\lambda_1^{-1}K^{-2q}}{K^{8q+4}} +\frac{\lambda_1^{-2}}{K^{32q+8}}\big)\\
   &\lesssim K^{-12q-1}\mathscr{H}TM_*^{-q}+R_7,
\end{align*}
where
\begin{equation*}
R_7=K^{-8q-3}(\mathscr{H}T)^{-1}M_*^{q+2}+K^{-28q-10}(\mathscr{H}T)^{-2}M_*^{2q+2}
   +K^{-32q-8}.
\end{equation*}

Recall that $R_5\lesssim K^{-12q-6}M_*^{-1}$, hence $R_5\lesssim K^{-32q-8}$ and
\begin{equation*}
S_4=S_5+R_5\lesssim K^{-12q-1}\mathscr{H}TM_*^{-q}+R_7.
\end{equation*}

Plugging this bound into the inequality in Lemma \ref{weyl-van-inequality} gives
\begin{equation}
|S(T, M_*; G, F)|^{Q}\lesssim M_*^{2Q}H^{-1}+K^{-12q-1}TM_*^{2Q-q-2}H^{2-2/Q}+\textrm{E}, \label{cccc}
\end{equation}
where
\begin{equation*}
\begin{split}
\textrm{E}&=M_*^{2(Q-1)}(K^{-8q-3}T^{-1}M_*^{q+2}H^{-2+2/Q}\log H \\
  &\quad +K^{-32q-8}+K^{-28q-10}T^{-2}M_*^{2q+2}H^{-2+2/Q}).
\end{split}
\end{equation*}

In order to balance the first two terms on the right side of \eqref{cccc} we let
\begin{equation*}
H=c_5(K^{12q+1}T^{-1}M_*^{q+2})^{Q/(3Q-2)}.
\end{equation*}
The assumption \eqref{restriction} implies $H\leqslant c_5 K^{4q+2} M_*$. Since we can assume
\begin{equation*}
T<c_7 K^{12q+1}M_*^{q+2}
\end{equation*}
with a sufficiently small $c_7$ (otherwise the trivial bound of $S(T, M_*; G, F)$, {\it i.e.} $M_*^2$, is better than \eqref{kkk}), it implies $1<H$. With the choice of $H$ as above, \eqref{cccc} leads to \eqref{kkk}.
\end{proof}


\begin{proof}[Proof of Lemma \ref{R5}]
Let $\lambda_2=\lambda_2(p)=M_*|p|$ and
\begin{equation*}
\Gamma_q(z, p)=(\mathscr{H}TM_*^{-q}F_q(K^2 z)-K^{2}M_* \langle p, z\rangle)/\lambda_2,
\end{equation*}
then
\begin{equation*}
R_5=K^{4}M_*^2 \sum_{|p|\geqslant A_1K^{1-2q}\mathscr{H}TM_*^{-q-1}} \int \Psi_q(z)e(\lambda_2 \Gamma_q(z, p))\,dz.
\end{equation*}

For $z\in K^{-2}\Omega_q$, we have $D^{\nu}\Psi_q(z)\lesssim K^{-4q|\nu|}$ and $D^{\nu}_z\Gamma_q(z, p)\lesssim 1$. We also have
\begin{equation*}
|\nabla_z \Gamma_q(z, p)+K^2 p/|p||\leqslant K^2/2,
\end{equation*}
which implies $|\nabla_z \Gamma_q(z, p)|\geqslant K^2/2$. By Lemma \ref{app:lemma:2}, we have for any $N\in \mathbb{N}$
\begin{equation*}
\int \Psi_q(z)e(\lambda_2 \Gamma_q(z, p))\,dz\lesssim K^{-(4q+2)N-4}M_*^{-N}|p|^{-N}.
\end{equation*}
The case $N=3$ gives the desired bound for $R_5$.
\end{proof}


\begin{proof}[Proof of Lemma \ref{R6}]
Denote $\lambda_3=K^{-2}\lambda_1$, $g(z, p)=K^2\Phi_q(z, p)$, and
\begin{equation*}
u(z)=[1-\sum_{j=1}^{J(p)}\chi_j(z)] \Psi_q(z).
\end{equation*}

By \eqref{gg}, we have
\begin{equation*}
D^{\nu}_z g(z, p)\lesssim 1.
\end{equation*}
Since $\supp (u)$ is away from critical points, we get $|\nabla_z \Phi_q(z, p)|\gtrsim K^{8q}$ if $z\in \supp (u)$ by \eqref{taylor} (see the proof of Proposition 2.4 in Guo~\cite{guo} for more details), which gives
\begin{equation*}
|\nabla_z g(z, p)|\gtrsim K^{8q+2} \quad \textrm{if $z\in \supp (u)$}.
\end{equation*}
Since $\chi_j$'s have disjoint support and $D^{\nu} \chi_j\lesssim K^{-4q|\nu|}$, we get
\begin{equation*}
D^{\nu} u(z)\lesssim K^{-4q|\nu|}.
\end{equation*}

By Lemma \ref{app:lemma:2} for any $N\in \mathbb{N}$
\begin{equation*}
R_6=\int u(z)e(\lambda_3 g(z, p))\,dz \lesssim K^{-(16q+2)N-4}\lambda_1^{-N}.
\end{equation*}
In particular we get \eqref{boundR6} if we let $N=2$.
\end{proof}

\section{Proof of Theorem \ref{finitetype}}\label{mainterm}

Let $\rho\in C_0^{\infty}(\mathbb{R}^2)$ such that $\int_{\mathbb{R}^2}\rho(y)\,dy=1$. The central question in the lattice point problem is how to estimate the sum
\begin{equation}
\sum_{k\in \mathbb{Z}^2_*} \widehat{\chi}_{\mathcal{B}}(tk)\widehat{\rho}(\varepsilon k). \label{key-sum}
\end{equation}

If $\partial \mathcal{B}$ has positive curvature, by H\"{o}rmander's formula that we mentioned at the beginning of \S \ref{four-tran}, the estimate of \eqref{key-sum} is reduced to an exponential sum. Then one can use the classical van der Corput methods for exponential sums. For such treatment the reader could consult, for example, Kr\"{a}tzel and Nowak~\cite{nowak I, nowak II}, M\"{u}ller~\cite{mullerII}, the author~\cite{guo}, etc.

If the curvature is allowed to vanish, this cannot be done directly. One may then replace $\mathcal{B}$ by $\mathcal{B}_\theta$ and consider the integral of \eqref{key-sum} over all rotations, namely
\begin{equation*}
\int_0^{2\pi} \big|\sum_{k\in \mathbb{Z}^2_*} \widehat{\chi}_{\mathcal{B}_\theta}(t k)\widehat{\rho}(\varepsilon k)\big| \,d\theta.
\end{equation*}
For example Iosevich~\cite{iosevich} estimated such integral by putting absolute value on each term in the sum. Rather than doing that, we properly split the sum into two parts: one with more terms, one with less. We put absolute value on each term in the latter part. To the former part, we apply the asymptotic formula of $\widehat{\chi}_{\mathcal{B}_\theta}(\xi)$ away from those points $\xi$ corresponding to small curvature. This is where we need Corollary \ref{FTofInd} with an error term containing curvature explicitly. Then the estimate is reduced to an exponential sum, to which we can apply similar methods used in \cite{nowak I}, \cite{nowak II}, \cite{mullerII}, and \cite{guo}. The former part is where we gains and the reason why we achieve a sharper bound. We carry out our idea above in the proof of the following lemma.

\begin{lemma}\label{lemma5:0}
Let $\zeta=1/3831$ and $\rho\in C_0^{\infty}(\mathbb{R}^2)$ such that $\int_{\mathbb{R}^2}\rho(y)\,dy=1$. If $\mathcal{B}$ is a compact convex planar domain with smooth boundary of finite type $\omega$ which contains the origin as an interior point, then for $j\in \mathbb{N}$ we have
\begin{equation*}
\int_0^{2\pi} \sup_{2^{j-1}\leqslant t<2^{j+2}} \big|t^{4/3+\zeta+\sigma(\omega)}\sum_{k\in \mathbb{Z}^2_*} \widehat{\chi}_{\mathcal{B}_\theta}(tk)\widehat{\rho}(\varepsilon k)\big| \,d\theta \lesssim 1,
\end{equation*}
where $\varepsilon=\varepsilon(j, \omega)=2^{-j\alpha(\omega)}$,
\begin{equation*}
\alpha(\omega)=\frac{426\omega-832}{1277\omega-2496},\quad \textrm{and} \quad \sigma(\omega)=\frac{832}{1277(1277\omega-2496)}.
\end{equation*}
\end{lemma}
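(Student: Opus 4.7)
The plan is to estimate the sum in \eqref{key-sum} via a three-fold decomposition: (i) a smooth dyadic partition in $|k|$ into shells $|k|\asymp M=2^m$, (ii) an angular partition around the ``bad'' directions $R_{\theta}\vec{n}(P_i)$ for $i=1,\ldots,\Xi$ at dyadic angular scales $\kappa_l=2^l\kappa_0$, and (iii) a split of the $\theta$-integral so that the region ``very near'' bad directions is treated by trivial absolute-value estimates, while the region where the curvature $K_{k/|k|}^{\theta}$ is bounded below is treated via the asymptotic formula of Corollary~\ref{FTofInd} and the exponential-sum bound of Proposition~\ref{claim1}. Because $\widehat{\rho}$ has rapid decay, only $|k|\lesssim\varepsilon^{-1}\log^{O(1)}$ contributes, up to negligible tails. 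The $\sup_{t\in[2^{j-1},2^{j+2}]}$ in the statement is absorbed by a standard Sobolev-type trick (bounding the sup by $L^{\infty}$-interpolation against the $t$-derivative), which costs only a harmless power of $\log t$.

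For the ``near bad'' piece $\kappa<\kappa_0$, I would pull the $\theta$-integral inside the sum over $k$ and bound $|\widehat{\chi}_{\mathcal{B}_\theta}(tk)|$ by Randol's estimate \eqref{randolbound}, which produces the singular factor $\mathfrak{A}_{k/|k|,R_\theta\vec{n}(P_i)}^{-(\omega_i-2)/(2(\omega_i-1))}$. Since each $\omega_i\geqslant 4$, the exponent is $<1$, so integrating in $\theta$ yields a bound of size $\kappa_0^{\omega/(2(\omega-1))}$ per $k$; summing over $|k|\asymp M$ and dyadic shells $M\lesssim\varepsilon^{-1}$ gives a total contribution of order $t^{-3/2}\varepsilon^{-1/2}\kappa_0^{\omega/(2(\omega-1))}\log^{O(1)}$, which one wishes to hold against $t^{-4/3-\zeta-\sigma(\omega)}$.

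For the ``good'' piece, on each angular annulus of width $\kappa=\kappa_l\geqslant\kappa_0$ and shell $|k|\asymp M$, Lemma~\ref{lemma2:1} gives $K_{k/|k|}^{\theta}\asymp\kappa^{(\omega-2)/(\omega-1)}$, so Corollary~\ref{FTofInd} (with $N=1$) is applicable, yielding an oscillatory main term of size $t^{-3/2}M^{-3/2}(K_\xi^{\theta})^{-1/2}e(\mp tH_\theta(\pm k))$ plus an admissible $O(t^{-2}(K_\xi^{\theta})^{-4})$ error. After rescaling $k=M_*z$ with $M_*\asymp M$, the main term becomes an exponential sum $S(T,M_*;G,F)$ of the form considered in \S\ref{proof-claims}, with $T\asymp tM$, $F$ a rescaled $H_\theta$, and $G$ a smooth angular--radial cutoff of good support. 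The hypotheses of Proposition~\ref{claim1}, namely \eqref{bdy-dist}--\eqref{lowerforf}, are verified with $K\asymp K_\xi^{\theta}$: the derivative bounds come from Lemma~\ref{upperforH} and Remark~\ref{remark1}, while the nondegeneracy \eqref{lowerforf} comes from Lemma~\ref{non-vanishinglemma} after a linear change of variables by $(v_1^*(\xi),v_2^*(\xi))^{-1}$ (whose norm is controlled by \eqref{lemma4:4-1}), transforming the Hessian-derivative matrix into the required $h_q^{\theta}$. The resulting bound from Proposition~\ref{claim1} gives a power saving over $M_*^2$ which, after averaging in $\theta$ (the $\theta$-integration only sees the angular localization, costing a factor of $\kappa$), yields the desired contribution.

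The main obstacle, and the source of the specific fractions $\zeta=1/3831$, $\alpha(\omega)$ and $\sigma(\omega)$, is a multi-parameter optimization: one must balance the near-bad bound against the exponential-sum bound by simultaneously choosing the threshold $\kappa_0$, the truncation scale $\varepsilon=2^{-j\alpha(\omega)}$, and the integer $q$ (hence $Q=2^q$) in Proposition~\ref{claim1}. Each of the several additive terms constituting the remainder $R$ in \eqref{longremainder}, together with the leading term $(K^{-12q-1}TM_*^{6Q-q-6})^{1/(3Q-2)}$, must individually be shown to be dominated by $t^{-4/3-\zeta-\sigma(\omega)}$ after summation over $M$ and $\kappa_l$. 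The key structural feature that makes $\zeta$ independent of $\omega$ is that the $K$-dependence in Proposition~\ref{claim1} is polynomial and tracks the curvature scale $\kappa^{(\omega-2)/(\omega-1)}$, so when combined with the $\theta$-averaging factor $\kappa$, the losses proportional to $\omega$ are absorbed uniformly, leaving only the residual $\omega$-dependent correction $\sigma(\omega)\to 0$ as $\omega\to\infty$. A secondary nuisance is the bookkeeping of the $v_1^*,v_2^*$ change of basis, since it distorts the lattice in a $\xi$-dependent manner; I would handle this by a further partition of the angular good region into sub-sectors on which $v_j^*$ may be held fixed, which is legitimate because the required smoothness bounds on the cutoff in \eqref{aaa} still hold at scale $K^{4q+2}$.
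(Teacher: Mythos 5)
Your overall strategy — split the Poisson-summed dual sum into a ``near bad normal'' region handled by Randol's bound \eqref{randolbound} integrated over $\theta$, and a ``good'' region on which the curvature-explicit asymptotic of Corollary~\ref{FTofInd} converts the sum into an exponential sum to which Proposition~\ref{claim1} applies, with the nondegeneracy supplied by Lemma~\ref{non-vanishinglemma} via a change of lattice basis to $v_1^*,v_2^*$ held fixed on angular sub-sectors — is the same as the paper's. Your closing observation about a further angular sub-sectorization on which $v_1^*,v_2^*$ can be frozen is precisely the partition of unity $\{\psi_i\}$ subordinate to the cones $\mathfrak{C}_i^{\theta}$ at scale $K^{4q+2}$.

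There is, however, one real gap. Your handling of $\sup_{t\in[2^{j-1},2^{j+2}]}$ by a Sobolev-type interpolation against $\partial_t$ does not cost ``only a harmless power of $\log t$'': each $\partial_t$ acting on $\widehat{\chi}_{\mathcal{B}_\theta}(tk)$ brings down a factor $\sim|k|$, so at the dyadic scale $|k|\asymp M$ the trick loses roughly $(2^j M)^{1/2}$, and since $M$ ranges up to $\varepsilon^{-1}=2^{j\alpha(\omega)}$ this is a polynomial loss that would wreck the optimization leading to $\zeta=1/3831$. The paper avoids the issue entirely: on the near-bad region the trivial estimate is applied to Randol's scale-invariant quantity $\Phi(\xi)=\sup_{r>0}r^{3/2}|\widehat{\chi}_{\mathcal{B}}(r\xi)|$, so $\sup_t$ is absorbed for free; on the good region the exponential-sum bound from Proposition~\ref{claim1} is uniform over the dyadic $t$-range, so the paper proves the stronger \emph{pointwise-in-$\theta$} estimate $\sup_t t^{4/3+\zeta+\sigma(\omega)}|\textrm{sum I}|\lesssim_{\mathcal{B}}1$ directly. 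Relatedly, your plan to gain a factor $\kappa$ from $\theta$-averaging on the good piece is not needed and does not actually appear in the argument (the $\theta$-integral is only exploited on the near-bad piece, in the proof of Claim~\ref{claim2}); you should discard both of these and proceed as above. A minor organizational difference that does not affect validity: the paper cuts at a single curvature threshold $\delta$ rather than summing over dyadic angular scales $\kappa_l$, and fixes $q=3$ rather than optimizing over $q$; these choices are what produce the stated $\alpha(\omega)$, $\sigma(\omega)$, $\zeta$.
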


Before we prove this result, we firstly apply it to prove the following lemma, which easily implies Theorem \ref{finitetype} (see Iosevich~\cite[p.~27]{iosevich} for this argument).

\begin{lemma}\label{lemma5:1} Under the same hypothesis as in Lemma \ref{lemma5:0}, for $j\in \mathbb{N}$ we have
\begin{equation*}
\int_0^{2\pi} \sup_{2^j\leqslant t<2^{j+1}} t^{-2/3+\zeta+\sigma(\omega)}|P_{\mathcal{B}_\theta}(t)| \,d\theta \leqslant C,
\end{equation*}
where $C$ is independent of $j$.
\end{lemma}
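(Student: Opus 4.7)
The plan is to carry out the standard smoothing + Poisson summation reduction of $P_{\mathcal{B}_\theta}(t)$ to the Fourier series controlled by Lemma \ref{lemma5:0}, with the exponent $\alpha(\omega)$ designed precisely so that the two resulting error sources balance. Fix $j\in\mathbb{N}$, $t\in[2^j,2^{j+1}]$, and set $\varepsilon=2^{-j\alpha(\omega)}$. Choose a nonnegative $\rho\in C_0^\infty(\mathbb{R}^2)$ supported in the unit ball with $\int\rho=1$ and put $\rho_\varepsilon(y)=\varepsilon^{-2}\rho(y/\varepsilon)$, so $\widehat{\rho_\varepsilon}(k)=\widehat{\rho}(\varepsilon k)$. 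For a convex body $K$ with inradius $>\varepsilon$, let $K^{+}$, $K^{-}$ denote its outer and inner $\varepsilon$-parallel bodies; convexity gives $(K^{-})^{+}=K$ and $K\subseteq(K^{+})^{-}$, from which the pointwise sandwich
\begin{equation*}
\chi_{(t\mathcal{B}_\theta)^{-}}*\rho_\varepsilon \;\leqslant\; \chi_{t\mathcal{B}_\theta} \;\leqslant\; \chi_{(t\mathcal{B}_\theta)^{+}}*\rho_\varepsilon
\end{equation*}
follows by a direct verification. This sandwich is valid for large $j$ since the inradius of $t\mathcal{B}_\theta$ is $\asymp t\gg \varepsilon$.

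Sum over $m\in\mathbb{Z}^2$ and apply Poisson summation to the smooth compactly supported functions on both sides. Writing $(t\mathcal{B}_\theta)^{\pm}=t\mathcal{D}^{\pm}$, where $\mathcal{D}^{\pm}$ denotes the $\pm\varepsilon/t$-parallel body of $\mathcal{B}_\theta$, and using $\widehat{\chi_{tE}}(k)=t^2\widehat{\chi}_E(tk)$, the $k=0$ term contributes $t^2\textrm{area}(\mathcal{B}_\theta)+O(\varepsilon t)$ (the $\varepsilon/t$-shell around $\partial\mathcal{B}_\theta$ has area $O(\varepsilon/t)$, rescaled by $t^2$). Subtracting $\textrm{area}(\mathcal{B}_\theta)t^2$ from both sides of the sandwich leaves
\begin{equation*}
|P_{\mathcal{B}_\theta}(t)| \;\lesssim\; \varepsilon t + t^2\max_{\pm}\Bigl|\sum_{k\in\mathbb{Z}^2_*}\widehat{\chi}_{\mathcal{D}^{\pm}}(tk)\,\widehat{\rho}(\varepsilon k)\Bigr|.
\end{equation*}

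Multiplying by $t^{-2/3+\zeta+\sigma(\omega)}$, taking the supremum over $t\in[2^j,2^{j+1}]$, and integrating in $\theta$ produces two terms. For the first, a short computation from the definitions yields the identity $\alpha(\omega)=1/3+\zeta+\sigma(\omega)$, which makes $\varepsilon\cdot t^{1/3+\zeta+\sigma(\omega)}\lesssim 1$ on the dyadic block, so this term contributes $O(1)$ after integration in $\theta$. For the second, I invoke Lemma \ref{lemma5:0} with $\mathcal{B}$ replaced by the parallel body $\mathcal{D}^{\pm}$; since $\varepsilon/t$ varies by only a factor of two on the slightly enlarged interval $[2^{j-1},2^{j+2}]$, this gives
\begin{equation*}
\int_0^{2\pi}\sup_{t\in[2^j,2^{j+1}]}t^{4/3+\zeta+\sigma(\omega)}\Bigl|\sum_{k\in\mathbb{Z}^2_*}\widehat{\chi}_{\mathcal{D}^{\pm}}(tk)\,\widehat{\rho}(\varepsilon k)\Bigr|\,d\theta \;\lesssim\; 1,
\end{equation*}
completing the bound with $C$ independent of $j$.

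The principal technical point is verifying that Lemma \ref{lemma5:0} applies uniformly to the $\pm\delta$-parallel bodies $\mathcal{D}^{\pm}$ as $\delta=\varepsilon/t\to 0$. This is a continuity/uniformity check rather than a new estimate: the parallel body of a smooth convex body is again smooth and convex, shares the same Gauss map as $\mathcal{B}_\theta$, and at the boundary point with exterior normal $\xi$ has curvature $K_\xi^{\theta}/(1\pm\delta K_\xi^{\theta})$. Consequently the points of vanishing curvature and their orders are unchanged, so $\mathcal{D}^{\pm}$ has the same finite type $\omega$ as $\mathcal{B}$; moreover all the geometric and analytic estimates of \S\ref{non-vanishing}--\S\ref{proof-claims} entering the proof of Lemma \ref{lemma5:0} depend continuously on a finite number of boundary data, so the implicit constants stay bounded as $\delta\to 0$.
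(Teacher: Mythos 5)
Your proof takes a different route from the paper's and, while the overall shape is reasonable, it contains a genuine gap that the paper's argument is designed to avoid.

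The paper proceeds via M\"uller's dilation trick, quoted from \cite{mullerI}: since $B(0,1/C_1)\subset\mathcal{B}$, one has $B(0,\varepsilon)\subset C_1\varepsilon\,\mathcal{B}_\theta$, so convexity gives the sandwich
\begin{equation*}
N_{\varepsilon,\theta}(t-C_1\varepsilon)\ \leqslant\ \#(t\mathcal{B}_\theta\cap\mathbb{Z}^2)\ \leqslant\ N_{\varepsilon,\theta}(t+C_1\varepsilon),
\end{equation*}
hence $|P_{\mathcal{B}_\theta}(t)|\lesssim |R_{\varepsilon,\theta}(t\pm C_1\varepsilon)|+t\varepsilon$. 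Crucially, the \emph{domain} $\mathcal{B}_\theta$ stays fixed; only the dilation parameter is nudged, and $t\pm C_1\varepsilon$ lands in $[2^{j-1},2^{j+2}]$, the interval already built into Lemma~\ref{lemma5:0}. So Lemma~\ref{lemma5:0} applies verbatim, with no new estimates needed.

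Your sandwich via inner/outer $\varepsilon$-parallel bodies, by contrast, replaces $\mathcal{B}_\theta$ by a family of domains $\mathcal{D}^{\pm}$ whose parallel-body parameter $\delta=\varepsilon/t$ varies both with $j$ and with $t$ \emph{inside} the dyadic block. You are then forced to invoke Lemma~\ref{lemma5:0} for a $j$- and $t$-dependent family of domains, and you dismiss the resulting issue as ``a continuity/uniformity check rather than a new estimate.'' That is the gap: Lemma~\ref{lemma5:0}, as proved, has implicit constants depending on $\mathcal{B}$ through every intermediate result --- Lemma~\ref{lemma2:1}, Lemma~\ref{upperforH}, Lemma~\ref{non-vanishinglemma}, Theorem~\ref{asym-R^2}, Corollary~\ref{FTofInd}, Proposition~\ref{claim1}, and Claims~\ref{claim2}--\ref{claim3} --- and each of these would need to be re-examined to show that the constants remain bounded uniformly in $\delta\in(0,\delta_0]$. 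It is plausible (the support function of $\mathcal{D}^{\pm}$ is $H\pm\delta|\xi|$, so all derivatives on $S^1$ converge to those of $H$, and $K_\xi^{\mathcal{D}^{\pm}}=K_\xi/(1\mp\delta K_\xi)\to K_\xi$), but it is real work, not a remark, and it involves tracking constants through several sections. Moreover, since $\mathcal{D}^{\pm}$ depends on $t$, your $\sup_t$ is over a family of expressions with moving domains, which is not literally the quantity controlled by Lemma~\ref{lemma5:0}; making this precise requires either a two-parameter supremum and uniform constants, or an a priori equicontinuity argument, neither of which you supply. The paper's choice of dilating in $t$ rather than taking parallel bodies sidesteps all of this, which is exactly why Lemma~\ref{lemma5:0} is stated with the enlarged interval $[2^{j-1},2^{j+2}]$. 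Your arithmetic ($\alpha(\omega)=1/3+\zeta+\sigma(\omega)$) is correct, and the sandwich inequalities themselves are fine, but the central reduction is not complete as written.
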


\begin{proof}[Proof of Lemma \ref{lemma5:1}] Define $\rho_{\varepsilon}(y)=\varepsilon^{-2}\rho(\varepsilon^{-1}y)$ and
\begin{equation*}
N_{\varepsilon, \theta}(t)=\sum_{k\in \mathbb{Z}^2} \chi_{t\mathcal{B}_\theta}*\rho_{\varepsilon}(k).
\end{equation*}
By the Poisson summation formula
\begin{equation*}
N_{\varepsilon, \theta}(t)=t^2\sum_{k\in \mathbb{Z}^2} \widehat{\chi}_{\mathcal{B}_\theta}(tk)\widehat{\rho}(\varepsilon k)=\textrm{area}(\mathcal{B})t^2+R_{\varepsilon, \theta}(t),
\end{equation*}
where
\begin{equation*}
R_{\varepsilon, \theta}(t)=t^2\sum_{k\in \mathbb{Z}^2_*} \widehat{\chi}_{\mathcal{B}_\theta}(tk)\widehat{\rho}(\varepsilon k).
\end{equation*}

M\"uller proved in \cite{mullerI} that if $C_1>0$ satisfies $B(0, 1/C_1)\subset \mathcal{B}$ then
\begin{equation*}
N_{\varepsilon, \theta}(t-C_1\varepsilon)\leqslant \#\{\mathbb{Z}^2\cap t\mathcal{B}_\theta\}=\sum_{k\in \mathbb{Z}^2}\chi_{t\mathcal{B}_\theta}(k)\leqslant N_{\varepsilon, \theta}(t+C_1\varepsilon),
\end{equation*}
which implies
\begin{equation*}
P_{\mathcal{B}_\theta}(t)\lesssim |R_{\varepsilon, \theta}(t+C_1\varepsilon)|+|R_{\varepsilon, \theta}(t-C_1\varepsilon)|+t\varepsilon.
\end{equation*}
Then
\begin{align*}
\sup_{2^j\leqslant t<2^{j+1}} t^{-2/3+\zeta+\sigma(\omega)}|P_{\mathcal{B}_\theta}(t)|  &\lesssim  \sup_{2^j\leqslant t<2^{j+1}} t^{-2/3+\zeta+\sigma(\omega)}t\varepsilon \\
 &\quad +\sup_{2^j\leqslant t<2^{j+1}} t^{-2/3+\zeta+\sigma(\omega)}|R_{\varepsilon, \theta}(t\pm C_1\varepsilon)|.
\end{align*}
The first term on the right side is bounded by a constant, and the second one is in $L^1(S^1)$ due to Lemma \ref{lemma5:0}.
\end{proof}

\begin{proof}[Proof of Lemma \ref{lemma5:0}]
Let $t\in [2^{j-1}, 2^{j+2}]$. For any $\theta \in [0, 2\pi]$ we have the following splitting
\begin{equation*}
\sum_{k\in \mathbb{Z}^2_*} \widehat{\chi}_{\mathcal{B}_\theta}(tk)\widehat{\rho}(\varepsilon k)=\textrm{sum I}+\textrm{sum II},
\end{equation*}
where
\begin{equation*}
\textrm{sum I}=\sum_{k\in D_1(\delta, \theta)} \widehat{\chi}_{\mathcal{B}_\theta}(tk)\widehat{\rho}(\varepsilon k),
\end{equation*}
\begin{equation*}
\textrm{sum II}=\sum_{k\in D_2(\delta, \theta)} \widehat{\chi}_{\mathcal{B}_\theta}(tk)\widehat{\rho}(\varepsilon k),
\end{equation*}
and $D_1(\delta, \theta)$, $D_2(\delta, \theta)$ are two regions defined as follows: if
\begin{equation*}
\delta=\delta(j, \omega)=2^{-j\beta(\omega)}\quad \textrm{with} \quad \beta(\omega)=\frac{\omega-2}{1277\omega-2496},
\end{equation*}
\begin{equation*}
D_2(\delta, 0)=\{\xi\in \mathbb{R}^2_*: K_{\xi}\leqslant \delta \textrm{ or } K_{-\xi}\leqslant \delta \},
\end{equation*}
and $D_1(\delta, 0)=\mathbb{R}^2_{*}\setminus D_2(\delta, 0)$, then
\begin{equation*}
D_2(\delta, \theta)=R_{\theta} D_2(\delta, 0)  \quad \textrm{and} \quad D_1(\delta, \theta)=R_{\theta} D_1(\delta, 0).
\end{equation*}

Note that $D_2(\delta, 0)$ is the union of finitely many planar double cones (symmetric about the origin)\footnote{A planar double cone symmetric about the origin is, for example, the (smaller) region bounded between $y=x$ and $y=1.001x$.} minus the origin. If $t$ is large, these cones intersect only at the origin.

For sum II we have
\begin{claim}\label{claim2}
\begin{equation}
\int_0^{2\pi} \sup_{2^{j-1}\leqslant t<2^{j+2}} t^{4/3+\zeta+\sigma(\omega)}\big| \operatorname{sum\ II}\big| \,d\theta \lesssim 1.\label{claim2-ineq}
\end{equation}
\end{claim}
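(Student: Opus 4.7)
The plan is to estimate each term of sum II pointwise via Randol's bound \eqref{randolbound}, then integrate in $\theta$ using Fubini and the fact that for each fixed $k\in\mathbb{Z}^2_*$ the rotation $\theta\mapsto R_\theta^t k/|k|$ sweeps $S^1$ uniformly. Throughout the range $t\in[2^{j-1},2^{j+2}]$, the factor $t^{4/3+\zeta+\sigma(\omega)}$ is comparable to $2^{j(4/3+\zeta+\sigma(\omega))}$, so the supremum in $t$ may be pulled out with only a uniform constant cost, and likewise $t^{-3/2}\asymp 2^{-3j/2}$.

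Combining Randol's estimate $|\widehat{\chi}_{\mathcal{B}_\theta}(tk)|\lesssim (t|k|)^{-3/2}\Phi(R_\theta^t k/|k|)$ with Lemma \ref{lemma2:1} gives $\Phi(\xi)\lesssim K_\xi^{-1/2}$ for $\xi$ near a singular normal, because $\mathfrak{A}_{\xi,\vec{n}(P_i)}\asymp K_\xi^{(\omega_i-1)/(\omega_i-2)}$ and the resulting power is independent of $\omega_i$. Fubini and rotation invariance then reduce the estimate to
\begin{equation*}
\int_0^{2\pi}\bigl|\operatorname{sum\ II}\bigr|\,d\theta\lesssim t^{-3/2}\Bigl(\sum_{k\in\mathbb{Z}^2_*}|\widehat{\rho}(\varepsilon k)||k|^{-3/2}\Bigr)\int_{D_2(\delta,0)\cap S^1}\Phi(\xi)\,d\xi.
\end{equation*}
A dyadic decomposition of $D_2(\delta,0)\cap S^1$ by the level sets $\{K_\xi\in[\delta 2^{-m-1},\delta 2^{-m}]\}$ (on which $\Phi\lesssim(\delta 2^{-m})^{-1/2}$ and the angular measure is $\lesssim(\delta 2^{-m})^{(\omega-1)/(\omega-2)}$ by Lemma \ref{lemma2:1}) yields a convergent geometric sum bounded by $\delta^{\omega/(2(\omega-2))}$. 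The standard bound $\sum_k|\widehat{\rho}(\varepsilon k)||k|^{-3/2}\lesssim\varepsilon^{-1/2}$ (split at $|k|\asymp\varepsilon^{-1}$ and use rapid decay of $\widehat{\rho}$ on the tail) then controls the left-hand side of \eqref{claim2-ineq} by $C\cdot 2^{jE(\omega)}$, where
\begin{equation*}
E(\omega)=-\tfrac{1}{6}+\zeta+\sigma(\omega)-\tfrac{\omega}{2(\omega-2)}\beta(\omega)+\tfrac{1}{2}\alpha(\omega).
\end{equation*}

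The main (and essentially only) obstacle is verifying that $E(\omega)\leqslant 0$ for every even $\omega\geqslant 4$. With the prescribed values $\zeta=1/(3\cdot 1277)$, $\sigma(\omega)=832/[1277(1277\omega-2496)]$, $\alpha(\omega)=(426\omega-832)/(1277\omega-2496)$, and $\beta(\omega)=(\omega-2)/(1277\omega-2496)$, a direct calculation clearing the common denominator $3\cdot 1277\cdot(1277\omega-2496)$ reduces everything to $-1277\omega+(1277\omega-2496)+2496=0$, so that $E(\omega)\equiv 0$. It is precisely this exact cancellation that forces the specific numerical constants appearing in Lemma \ref{lemma5:0} and hence in Theorem \ref{finitetype}; any looser choice of $\alpha$, $\beta$, $\sigma$, or $\zeta$ would destroy the balance between the pointwise bound, the angular smallness of the bad set, and the Poisson-like tail weight $\varepsilon^{-1/2}$.
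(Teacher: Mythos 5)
Your proposal is correct and follows essentially the same route as the paper: pull the sum over $k$ and the $t$-supremum outside, apply Randol's bound \eqref{randolbound} together with Lemma \ref{lemma2:1} on each small arc near a degenerate normal, sum the tail in $k$ to get $\varepsilon^{-1/2}$, and check that the exponents balance to zero. The only cosmetic difference is that you organize the angular integral by dyadic curvature levels (noting $\Phi(\xi)\lesssim(\delta_\xi)^{-1/2}$) whereas the paper directly integrates the power of the angle; both yield the same $\delta^{1/2+1/(\omega-2)}$.
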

We defer the proof of this claim until later. Next we will prove
\begin{equation}
\sup_{2^{j-1}\leqslant t<2^{j+2}} t^{4/3+\zeta+\sigma(\omega)}\big| \, \textrm{sum I} \, \big| \lesssim 1 \label{ineq-sumII}
\end{equation}
with an implicit constant depending only on $\mathcal{B}$. The conclusion in Lemma \ref{lemma5:0} follows easily from \eqref{claim2-ineq} and \eqref{ineq-sumII}.

Note if $\xi \in D_1(\delta, \theta)$ then $K_{\pm \xi}^{\theta}\geqslant \delta$. Applying Corollary \ref{FTofInd} to sum I yields
\begin{equation}
\textrm{sum I}=(2\pi)^{-1}e^{3\pi i/4} S_1+(2\pi)^{-1}e^{-3\pi i/4} \widetilde{S}_1+R_1,\label{decomp-sumI}
\end{equation}
where
\begin{equation*}
S_1=S_1(t, \varepsilon, \delta, \theta)=t^{-3/2}\sum_{k\in D_1(\delta, \theta)}|k|^{-3/2}(K_{k}^{\theta})^{-1/2}\widehat{\rho}(\varepsilon k)e(tH_{\theta}(k)),
\end{equation*}
\begin{equation*}
\widetilde{S}_1=\widetilde{S}_1(t, \varepsilon, \delta, \theta)=t^{-3/2}\sum_{k\in D_1(\delta, \theta)}|k|^{-3/2}(K_{-k}^{\theta})^{-1/2}\widehat{\rho}(\varepsilon k)e(-tH_{\theta}(-k)),
\end{equation*}
and
\begin{equation}
R_1\lesssim_{\mathcal{B}} \delta^{-4}t^{-2}\sum_{k\in \mathbb{Z}^2_*}|k|^{-2}|\widehat{\rho}(\varepsilon k)|\lesssim \delta^{-4}t^{-2}\log(\varepsilon^{-1})\lesssim t^{-4/3-\zeta-\sigma(\omega)}.\label{boundR1}
\end{equation}

We will only estimate $S_1$ since $\widetilde{S}_1$ is similar. Denote $\mathscr{C}_1=\{\xi \in \mathbb{R}^2: 1/2 \leqslant |\xi| \leqslant 2 \}$. Let us introduce a dyadic decomposition and a partition of unity.

Assume $\varphi \in C_0^{\infty}(\mathbb{R}^2)$ is a real radial function such
that $\supp(\varphi)\subset \mathscr{C}_1$, $0\leqslant \varphi \leqslant 1$, and
\begin{equation*}
\sum_{l_0=-\infty}^{\infty}\varphi(\frac{\xi}{2^{l_0}})=1  \quad \textrm{for }\xi\in \mathbb{R}^2\setminus \{0\}.
\end{equation*}
Denote
\begin{equation*}
S_{1, M}=\sum_{k\in D_1(\delta, \theta)}\varphi(M^{-1}k)|k|^{-3/2}(K_{k}^{\theta})^{-1/2}\widehat{\rho}(\varepsilon k)e(tH_{\theta}(k)),
\end{equation*}
then $S_1=t^{-3/2}\sum_{l_0=0}^{\infty}S_{1, 2^{l_0}}$. It suffices to estimate $S_{1, M}$ for a fixed $M=2^{l_0}$, $l_0\in \mathbb{N}_0$.

Let $q\in \mathbb{N}$. For each $\xi \in S^1\cap \mathcal{H}$ there exists a cone
\begin{equation*}
\mathfrak{C}(\xi, 2r(\xi)):=\mathop{\cup}\limits_{l>0} l B(\xi, 2r(\xi)),
\end{equation*}
where $r(\xi)=c_2 (K_{\xi})^{4q+2}/2$ and $c_2$ is the constant appearing in the statement of Lemma \ref{non-vanishinglemma}. Note that $K_\eta \asymp K_\xi$ if $\eta\in \mathfrak{C}(\xi, 2r(\xi))$. From the family of cones $\{\mathfrak{C}(\xi, r(\xi)/2): \xi \in S^1\cap \mathcal{H}\}$, we can choose, by a Vitali procedure, a sequence $\{\mathfrak{C}(\xi_i, r(\xi_i)/2)\}_{i=1}^{\infty}$ such that these cones still cover $\mathcal{H}$ and that $\{\mathfrak{C}(\xi_i, r(\xi_i))\}_{i=1}^{\infty}$ satisfy the bounded overlap property. Denote
\begin{equation*}
\mathfrak{C}_i^{\theta}=R_{\theta}\mathfrak{C}(\xi_i, r(\xi_i)).
\end{equation*}
Then $\{\mathfrak{C}_i^{\theta}\}_{i=1}^{\infty}$ forms an open cover of $\mathcal{H}_{\theta}$. We can construct a partition of unity $\{\psi_i\}_{i=1}^{\infty}$ such that
\begin{enumerate}[\upshape (i)]
\item  $\sum_{i}\psi_i \equiv 1$ on $\mathcal{H}_{\theta}$, and $\psi_i\in C_0^{\infty}(\mathfrak{C}_i^{\theta})$;
\item  each $\psi_i$ is homogeneous of degree zero;
\item  $|D^{\nu}\psi_i|\lesssim (K_{\xi_i})^{-(4q+2)|\nu|}$ on $\mathscr{C}_1$.
\end{enumerate}

From the family $\{\mathfrak{C}_i^{\theta}\}_{i=1}^{\infty}$ we can find a subfamily $\{\mathfrak{C}_i^{\theta}\}_{i\in \mathscr{A}}$ covering $D_1(\delta, \theta)$, where $\mathscr{A}=\mathscr{A}(\delta)$ is an index set such that $i\in \mathscr{A}$ if and only if $\mathfrak{C}_i^{\theta}$ intersects $D_1(\delta, \theta)$. Since $r(\xi_i)\gtrsim \delta^{4q+2}$ for any $i\in \mathscr{A}$, a size estimate gives that $\#\mathscr{A}\lesssim \delta^{-4q-2}$. Define
\begin{equation}
S_{1, M}^*=\sum_{i\in \mathscr{A}}S_{2, i},\label{formula1}
\end{equation}
where
\begin{equation*}
S_{2, i}=\sum_{k\in \mathbb{Z}^2} U_i^{\theta}(k) e(tH_{\theta}(k))
\end{equation*}
and
\begin{equation*}
U_i^{\theta}(k)=\psi_i(M^{-1}k)\varphi(M^{-1}k)|k|^{-3/2}(K_{k}^{\theta})^{-1/2}\widehat{\rho}(\varepsilon k).
\end{equation*}

Instead of $S_{1, M}$ we will estimate $S_{1, M}^*$. It turns out that the error
\begin{equation}
R_{2, M}=S_{1, M}^*-S_{1, M} \label{R2M}
\end{equation}
is relatively small and this will be clear later (see Claim \ref{claim3} below). To estimate $S_{1, M}^*$, it suffices to estimate $S_{2, i}$ for any fixed $i\in \mathscr{A}$.

By Lemma \ref{non-vanishinglemma} and the homogeneity of $H_{\theta}$, there exist two orthogonal vectors $v_1^*=v_1^*(R_{\theta}\xi_i)$, $v_2^*=v_2^*(R_{\theta}\xi_i)$$\in \mathbb{Z}^2$ such that $|v_1^*|=|v_2^*|\asymp (K_{\xi_i})^{-4q}$, $\|(v_1^*, v_2^*)^{-1}\|\lesssim (K_{\xi_i})^{4q}$, and
\begin{equation}
|h_q^{\theta}(\eta, v_1^*, v_2^*)|\gtrsim (K_{\xi_i})^{-8q^2-16q-2} \quad \textrm{if} \ \eta \in \mathop{\cup}\limits_{1/4\leqslant l\leqslant 4} l B(R_{\theta}\xi_i, 2r(\xi_i)).\label{determinant}
\end{equation}
Let $L=[\mathbb{Z}^2:\mathbb{Z}v_1^*\oplus\mathbb{Z}v_2^*]$ be the index of the lattice spanned by $v_1^*$, $v_2^*$ in the lattice $\mathbb{Z}^2$. Then
\begin{equation*}
L=|\det(v_1^*, v_2^*)|\lesssim (K_{\xi_i})^{-8q},
\end{equation*}
and there exist vectors $b_l\in \mathbb{Z}^2$ ($l=1, \ldots, L$) such that $|b_l|\lesssim (K_{\xi_i})^{-4q}$ and
\begin{equation*}
\mathbb{Z}^2=\uplus_{l=1}^{L}(\mathbb{Z}v_1^*+\mathbb{Z}v_2^*+b_l).
\end{equation*}
Let $N\in \mathbb{N}$ be arbitrarily fixed. Applying this decomposition, for any $k\in \mathbb{Z}^2$ we can write $k=m_1v_1^*+m_2v_2^*+b_l$ where $m_s\in \mathbb{Z}$ ($s=1, 2$). Hence
\begin{align}
S_{2, i}&=\sum_{l=1}^{L}\sum_{m\in \mathbb{Z}^2}U_i^{\theta}(m_1v_1^*+m_2v_2^*+b_l) e(tH_{\theta}(m_1v_1^*+m_2v_2^*+b_l))\nonumber\\
        &=(K_{\xi_i})^{-1/2}M^{-3/2}(1+M\varepsilon)^{-N}\sum_{l=1}^{L}S(T, M_*; G_l, F_l),\label{formula2}
\end{align}
where $T=tM$, $M_*=K^{4q}M$, $K=K_{\xi_i}$,
\begin{equation*}
G_l(y)=K^{1/2}M^{3/2}(1+M\varepsilon)^{N} U_i^{\theta}(M_*y_1v_1^*+M_*y_2v_2^*+b_l),
\end{equation*}
and
\begin{equation*}
F_l(y)=H_{\theta}(y_1K^{4q}v_1^*+y_2K^{4q}v_2^*+M^{-1}b_l).
\end{equation*}

We consider the function $F_l$ restricted to the convex domain
\begin{equation*}
\Omega_l=\{(y_1, y_2)^{t}\in \mathbb{R}^2 : y_1K^{4q}v_1^*+y_2K^{4q}v_2^*+M^{-1}b_l\in \mathop{\cup}\limits_{1/4\leqslant l\leqslant 4} l B(R_{\theta}\xi_i, 2r(\xi_i)) \}.
\end{equation*}
The support of $G_l$ satisfies
\begin{equation*}
\supp(G_l)\subset \{(y_1, y_2)^{t}\in \mathbb{R}^2 : y_1K^{4q}v_1^*+y_2K^{4q}v_2^*+M^{-1}b_l\in \overline{\mathscr{C}_1\cap \mathfrak{C}_i^{\theta}} \}\subset \Omega_l.
\end{equation*}

We want to apply to $S(T, M_*; G_l, F_l)$ Proposition \ref{claim1} with $G=G_l$, $F=F_l$, $\Omega=\Omega_l$, and $q=3$.

Since $1 \gtrsim K_{\xi_i}\gtrsim \delta$ for $i\in \mathscr{A}$, there exist positive constants $C_2$ and $C_3$ such that the assumptions of Proposition \ref{claim1} are satisfied if $C_3\delta^{-26}\leqslant M\leqslant C_2 t^{4/5}$. This follows from Lemma \ref{non-vanishinglemma}, \eqref{determinant} and the following facts: if $(K_{\xi_i})^{-4q}\lesssim M$ then $\Omega_l\subset c_0 B(0, 1)$ for a constant $c_0$ (depending only on $q$, $\mathcal{B}$);
\begin{equation*}
\dist \left(\big(\mathop{\cup}\limits_{1/4\leqslant l\leqslant 4} l B(R_{\theta}\xi_i, 2r(\xi_i)) \big)^c \, , \, \overline{\mathscr{C}_1\cap \mathfrak{C}_i^{\theta}}\right)\geqslant c_2 (K_{\xi_i})^{4q+2}/8;
\end{equation*}
and
\begin{equation*}
D^{\nu}U_i^{\theta} \lesssim (K_{\xi_i})^{-(4q+2)|\nu|-1/2}M^{-|\nu|-3/2}(1+M\varepsilon)^{-N}.
\end{equation*}

Thus by Proposition \ref{claim1} (with $q=3$) we get
\begin{equation}
S(T, M_*; G_l, F_l)\lesssim_{\mathcal{B}} (K_{\xi_i})^{24-97/22}t^{1/22}M^{20/11}+R, \label{kk}
\end{equation}
where
\begin{align*}
R\lesssim_{\mathcal{B}} K_{\xi_i}^{24}&\big[K_{\xi_i}^{-16}M^{7/4}+K_{\xi_i}^{-921/88}t^{-15/88}M^{24/11}\\
                  &\quad+K_{\xi_i}^{-145/22}t^{-1/22}M^{85/44}(\log M)^{1/8}\big].
\end{align*}
Using \eqref{formula1}, \eqref{formula2}, \eqref{kk}, $K_{\xi_i}\gtrsim \delta$, and bounds for $\#\mathscr{A}$ and $L$, we get
\begin{equation}
\begin{split}
S_{1, M}^* &\lesssim \delta^{-29/2}M^{-3/2}(1+M\varepsilon)^{-N}\big[\delta^{-97/22}t^{1/22}M^{20/11}+\delta^{-16}M^{7/4}\\
&\quad+\delta^{-921/88}t^{-15/88}M^{24/11}+\delta^{-145/22}t^{-1/22}M^{85/44}(\log M)^{1/8}\big].
\end{split}\label{for-S1-1}
\end{equation}

Now we can estimate $S_1$. By \eqref{R2M},
\begin{equation}
S_1=t^{-3/2}\left( \sum_{C_3\delta^{-26}\leqslant 2^{l_0}\leqslant C_2t^{4/5}}S_{1, 2^{l_0}}^*+R_2+R_3+R_4\right),\label{boundS1}
\end{equation}
where
\begin{equation*}
R_2=-\sum_{C_3\delta^{-26}\leqslant 2^{l_0}\leqslant C_2 t^{4/5}}R_{2, 2^{l_0}},
\end{equation*}
\begin{equation*}
R_3=\sum_{2^{l_0}<C_3\delta^{-26}}S_{1, 2^{l_0}}, \quad \textrm{and} \quad R_4=\sum_{2^{l_0}> C_2 t^{4/5}}S_{1, 2^{l_0}}. \end{equation*}
Taking \eqref{for-S1-1} and sizes of $\delta$ and $\varepsilon$ into account, we get
\begin{align}
\sum_{C_3 \delta^{-26}\leqslant 2^{l_0}\leqslant C_2t^{4/5}}S_{1, 2^{l_0}}^*
     &\lesssim_{\mathcal{B}} \delta^{-208/11}t^{1/22}\varepsilon^{-7/22}+\delta^{-2197/88}t^{-15/88}\varepsilon^{-15/22}\nonumber\\
     &\quad\quad +\delta^{-61/2}\varepsilon^{-1/4}+\delta^{-232/11}
t^{-1/22}\varepsilon^{-19/44}\log t \nonumber\\
     &\lesssim_{\mathcal{B}} \delta^{-208/11}t^{1/22}\varepsilon^{-7/22}.\label{qq}
\end{align}

\begin{claim}\label{claim3}
\begin{equation*}
\max(|R_2|, |R_3|, |R_4|)\lesssim_{\mathcal{B}} \delta^{-208/11}t^{1/22}\varepsilon^{-7/22}.
\end{equation*}
\end{claim}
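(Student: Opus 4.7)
The proof of Claim \ref{claim3} splits into three separate routine arguments, one for each term. The plan is to treat $R_3$ and $R_4$ as the dyadic tails that fall outside the range $C_3\delta^{-26}\leq M\leq C_2 t^{4/5}$ in which Proposition \ref{claim1} can be invoked, dispatching both by applying the triangle inequality to $S_{1,M}$ directly. The term $R_2=-\sum_M R_{2,M}$ measures the discrepancy between $S_{1,M}$ and its cone-partitioned version $S_{1,M}^*$; this is the only one of the three that is essentially sharp, and I would establish its bound by a geometric argument based on Lemma \ref{lemma2:1}.

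For $R_3$, the restriction $k\in D_1(\delta,\theta)$ gives $K_k^\theta\geq\delta$, so the triangle inequality yields $|S_{1,M}|\lesssim \delta^{-1/2} M^{1/2}$; summing dyadically over $M<C_3\delta^{-26}$ produces $|R_3|\lesssim \delta^{-27/2}$, and a short comparison of the $2^j$-exponents (with $\delta=2^{-j\beta(\omega)}$, $t\asymp 2^j$) shows this is comfortably dominated by the target. For $R_4$, I would use that $\alpha(\omega)<4/5$, which forces $M\varepsilon\gg 1$ throughout the range $M>C_2 t^{4/5}$; the Schwartz decay $|\widehat\rho(\varepsilon k)|\lesssim_N (\varepsilon M)^{-N}$ then renders each $|S_{1,M}|$ a rapidly decaying function of $M$, so the dyadic tail becomes negligible once $N$ is chosen sufficiently large.

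The substantive work lies in $R_2$, and this is the main obstacle. Unpacking the definition, $R_{2,M}$ is supported on those $k\in\mathbb{Z}^2_*$ with $|k|\asymp M$, $M^{-1}k\in\bigcup_{i\in\mathscr{A}}\mathfrak{C}_i^\theta$, and $k\in D_2(\delta,\theta)$. For every $i\in\mathscr{A}$ one has $K_{\xi_i}\gtrsim\delta$, and Lemma \ref{curv-in-ball} forces $K_\eta^\theta\asymp K_{\xi_i}^\theta\gtrsim\delta$ throughout the cone $\mathfrak{C}_i^\theta$; consequently the failure of $k$ to lie in $D_1(\delta,\theta)$ must be caused by the antipodal direction, i.e.\ $K_{-k}^\theta\leq\delta$. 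Applying Lemma \ref{lemma2:1} to bound the angular measure of $\{\eta\in S^1: K_\eta\leq\delta\}$ by $\lesssim\delta^{(\omega-1)/(\omega-2)}$, a standard lattice point count in a thin angular sector gives $\#\{k\in\mathbb{Z}^2:|k|\asymp M,\ K_{-k}^\theta\leq\delta\}\lesssim \delta^{(\omega-1)/(\omega-2)}M^2+M$. Combining this with the trivial per-term bound $|U_i^\theta(k)|\lesssim \delta^{-1/2}M^{-3/2}(1+M\varepsilon)^{-N}$ and summing dyadically, with the dominant scale at $M\sim\varepsilon^{-1}$, yields
\begin{equation*}
|R_2|\lesssim \delta^{(\omega-1)/(\omega-2)-1/2}\varepsilon^{-1/2}
\end{equation*}
up to strictly smaller contributions.

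What remains is a direct comparison of the $2^j$-exponents, which shows that this $R_2$ bound in fact \emph{matches} the target $\delta^{-208/11}t^{1/22}\varepsilon^{-7/22}$ exactly; this is where the precise values $\alpha(\omega)=(426\omega-832)/(1277\omega-2496)$ and $\beta(\omega)=(\omega-2)/(1277\omega-2496)$ are forced, with $R_3$ and $R_4$ comfortably absorbed. The hardest conceptual step is therefore the identification of the support of the overshoot in $R_2$ and the extraction of the correct $\delta^{(\omega-1)/(\omega-2)}$ saving from Lemma \ref{lemma2:1}; once that is in place the remainder is bookkeeping with exponents.
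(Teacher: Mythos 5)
Your overall plan is the same as the paper's: trivial estimates for $R_3$ (small dyadic scales $M<C_3\delta^{-26}$) and for $R_4$ (large $M>C_2t^{4/5}$, exploiting Schwartz decay of $\widehat\rho$), and for $R_2$ an angular-sector lattice-point count. The $R_3$ and $R_4$ treatments are correct and match the paper.

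The $R_2$ argument has a gap. You assert that since $K_\eta^\theta\asymp K_{\xi_i}\gtrsim\delta$ throughout $\mathfrak{C}_i^\theta$ for $i\in\mathscr{A}$, the only way a lattice point $k\in\mathfrak{C}_i^\theta$ can fall in $D_2(\delta,\theta)$ is via the antipodal direction, $K_{-k}^\theta\leq\delta$. This does not follow: the comparison $\gtrsim\delta$ carries an implicit constant, so when $K_{\xi_i}$ is itself of order $\delta$ the cone $\mathfrak{C}_i^\theta$ (whose interior touches $D_1(\delta,\theta)$, whence $i\in\mathscr{A}$) can stick out into $D_2(\delta,\theta)$, and for such $k$ one has $K_k^\theta\leq\delta$ directly. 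The paper treats this as a separate case: since then $K_{\xi_i}\lesssim\delta^{1/2}$, the cone radius $r(\xi_i)\asymp K_{\xi_i}^{4q+2}\lesssim\delta^{2q+1}$ is tiny, so these $k$ lie in at most $2\Xi$ cones of angular width $\lesssim\delta^{2q+1}$, contributing $\lesssim\delta^{2q+1/2}\varepsilon^{-1/2}$. That is dominated by your antipodal term, so your conclusion survives the omission, but the dichotomy must be stated and the second branch bounded.

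Two smaller remarks. First, in the antipodal branch you use the crude $(K_k^\theta)^{-1/2}\lesssim\delta^{-1/2}$; the paper observes that in its corresponding case $K_k^\theta\gtrsim 1$, yielding the sharper bound $\delta^{(\omega-1)/(\omega-2)}\varepsilon^{-1/2}$. Because of your extra $\delta^{-1/2}$ your $R_2$ estimate lands \emph{exactly} on the target exponent, whereas the paper's lands strictly below it. Second, this exact match is a coincidence and not what forces the values of $\alpha(\omega)$ and $\beta(\omega)$; those are fixed by making the leading term \eqref{qq} and the sum-II bound of Claim~\ref{claim2} simultaneously critical, both of which have exponent exactly zero after substituting the stated $\alpha,\beta,\zeta,\sigma$. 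Your $R_2$ happens to agree numerically with the former, but the paper does not need $R_2$ to be tight.
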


Hence \eqref{ineq-sumII} follows from \eqref{decomp-sumI}, \eqref{boundR1}, \eqref{boundS1}, \eqref{qq}, Claim \ref{claim3}, and sizes of $\delta$ and $\varepsilon$.
\end{proof}


\begin{remarks}
(1) Our proof works for convex planar domains of finite type. If $\omega=2$, the curvature does not vanish and $D_2(\delta, \theta)$ is empty. The method we used is essentially the same as those used in \cite{mullerII}, \cite{guo}, and will produce the same bound $O(t^{2/3-1/87})$.

(2) With essentially the same proof, we can actually prove that if $1\leqslant p< 2+2/(\omega-2)$ then
\begin{equation*}
\sup\limits_{t\geqslant 2} \log^{-a}(t)t^{-2/3+\Upsilon(\omega, p)}|P_{\mathcal{B}_\theta}(t)| \in L^{p}(S^1)
\end{equation*}
where $a>1/p$ and
\begin{equation*}
\Upsilon(\omega, p)=\frac{(2-p)\omega+2p-2}{3(1219p+58)\omega-3(2438p+58)}.
\end{equation*}

(3) We can possibly improve the exponent in Theorem \ref{ae-theorem} by iterating the van der Corput method.
\end{remarks}


\begin{proof}[Proof of Claim \ref{claim2}]
To begin with, we estimate
\begin{align*}
(\ast)&:=\int_{0}^{2\pi} 1_{D_2(\delta, \theta)}(k) \big(\sup_{2^{j-1}\leqslant t<2^{j+2}} |t k|^{3/2}|\widehat{\chi}_{\mathcal{B}_\theta}(t k)|\big) \,d\theta\\
      &=\int_{0}^{2\pi} 1_{D_2(\delta, 0)}(|k|(\cos \theta, \sin \theta)) \big(\sup_{t \sim 2^{j}} |t k|^{3/2}|\widehat{\chi}_{\mathcal{B}}(t|k|(\cos \theta, \sin \theta))|\big) \,d\theta.
\end{align*}
Recall the definition of $D_2(\delta, 0)$, we are only integrating over finitely many (no more than $2\Xi$) small arcs on $S^1$. By Lemma \ref{lemma2:1} the length of each arc is $\lesssim \delta^{(\omega_i-1)/(\omega_i-2)}$ ($i=1,\ldots, \Xi$). The inequality \eqref{randolbound} gives explicit upper bounds for
\begin{equation*}
\sup |tk|^{3/2}|\widehat{\chi}_{\mathcal{B}}(t|k|(\cos \theta, \sin \theta))|
\end{equation*}
over these small arcs. Hence the estimate is reduced to the following integral
\begin{equation*}
\int_0^{c\delta^{(\omega_i-1)/(\omega_i-2)}} \theta^{-\frac{\omega_i-2}{2(\omega_i-1)}} \, d\theta \lesssim \delta^{1/2+1/(\omega_i-2)}.
\end{equation*}
It follows that
\begin{equation*}
(\ast)\lesssim \sum_{i=1}^{\Xi} \delta^{1/2+1/(\omega_i-2)} \lesssim \delta^{1/2+1/(\omega-2)}.
\end{equation*}

Hence the left side of \eqref{claim2-ineq} is bounded by
\begin{align*}
&\lesssim (2^j)^{-1/6+\zeta+\sigma(\omega)} \sum_{k\in \mathbb{Z}^2_*} |\widehat{\rho}(\varepsilon k)||k|^{-3/2} (\ast)\\
          &\lesssim (2^j)^{-1/6+\zeta+\sigma(\omega)} \varepsilon^{-1/2}\delta^{1/2+1/(\omega-2)} \lesssim 1.
\end{align*}
In the last step, we use the definition of $\varepsilon$ and $\delta$.
\end{proof}


\begin{proof}[Proof of Claim \ref{claim3}]
We firstly estimate $R_2$. The trivial estimate gives
\begin{equation*}
|R_{2, M}|\leqslant \sum_{k\in \mathbb{Z}^2\setminus D_1(\delta, \theta)}\sum_{i\in \mathscr{A}} |\psi_i(k/M)||\varphi(k/M)||k|^{-3/2}(K_{k}^{\theta})^{-1/2}|\widehat{\rho}(\varepsilon k)|.
\end{equation*}
If $k\in (\mathbb{Z}^2\setminus D_1(\delta, \theta))\cap \mathfrak{C}_i^{\theta}$ for any $i\in \mathscr{A}$, there are only two possibilities as to the size of $K_k^{\theta}$: (i) If $K_k^{\theta}\gtrsim 1$ then $k$ is contained in finitely many (no more than $\Xi$) cones with angles $\lesssim \delta^{(\omega-1)/(\omega-2)}$; (ii) If $\delta \lesssim K_{k}^{\theta}\lesssim  \delta^{1/2}$ then $k$ is contained in $2\Xi$ cones with angles $\lesssim \delta^{2q+1}$. Based on these two cases, we split the sum above into two parts as follows:
\begin{equation*}
|R_{2, M}|\leqslant \sum_{\substack{k\in \mathbb{Z}^2\setminus D_1(\delta, \theta)\\K_k^{\theta}\gtrsim 1}}\sum_{i\in \mathscr{A}}+\sum_{\substack{k\in \mathbb{Z}^2\setminus D_1(\delta, \theta)\\ \delta \lesssim K_k^{\theta}\lesssim  \delta^{1/2}}}\sum_{i\in \mathscr{A}}.
\end{equation*}
Using this splitting, we get
\begin{align*}
|R_2|&\leqslant \sum_{C_3 \delta^{-26}\leqslant 2^{l_0}\leqslant C_2 t^{4/5}}|R_{2, 2^{l_0}}|\lesssim \delta^{(\omega-1)/(\omega-2)}\varepsilon^{-1/2}
+\delta^{2q+1/2}\varepsilon^{-1/2}\\
     &\lesssim \delta^{-208/11}t^{1/22}\varepsilon^{-7/22}.
\end{align*}

As to $R_3$, by a trivial estimate of $S_{1, 2^{l_0}}$, we get
\begin{equation*}
|R_3|\leqslant \delta^{-1/2}\sum_{2^{l_0}<C_3 \delta^{-26}}\sum_{k\in \mathbb{Z}^2_*}\varphi(2^{-l_0}k)|k|^{-3/2}|\widehat{\rho}(\varepsilon k)|  \lesssim \delta^{-27/2}.
\end{equation*}
Similarly, $R_4=O(\delta^{-1/2}t^{2/5-4N/5}\varepsilon^{-N})$ for any $N\in \mathbb{N}$. Both are smaller than $\delta^{-208/11}t^{1/22}\varepsilon^{-7/22}$.
\end{proof}


\appendix
\section{Several Lemmas}\label{app1}

Here is a quantitative version of the inverse function theorem.

\begin{lemma}\label{app:lemma:1}
Suppose $f$ is a $C^{(k)}$ ($k\geqslant 2$) mapping from an open set $\Omega\subset\mathbb{R}^d$
into $\mathbb{R}^d$ and $b=f(a)$ for some $a\in \Omega$. Assume $|\det (\nabla f(a))|$ $\geqslant$ $c$ and for any $x\in\Omega$,
\begin{equation*}
|D^{\nu} f_i(x)|\leqslant C
\quad \quad \textrm{for $|\nu|\leqslant 2$, $1\leqslant i\leqslant d$}.
\end{equation*}
If $r_0\leqslant \sup\{r>0: B(a, r)\subset \Omega\}$, then $f$ is bijective from $B(a, r_1)$ to an open set containing $B(b, r_2)$ where
\begin{equation*}
r_1=\min\{\frac{c}{2d^{7/2} (d-1)! C^d}, r_0\},
\end{equation*}
\begin{equation*}
r_2=\frac{c}{4d^{3/2}(d-1)!C^{d-1}}r_1.
\end{equation*}
The inverse mapping $f^{-1}$ is also in $C^{(k)}$.
\end{lemma}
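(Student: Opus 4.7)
The plan is a quantitative version of the standard inverse function theorem via Banach's fixed-point theorem, applied to the Newton-type map $T_y(x) = x - A^{-1}(f(x) - y)$ where $A = \nabla f(a)$. After translating so that $a = 0$ and $b = 0$, the goal is to show that for every $y \in B(0, r_2)$ the map $T_y$ is a $\tfrac12$-contraction of $B(0, r_1)$ into itself, producing a unique fixed point which is by construction a preimage of $y$ under $f$.

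The two quantitative ingredients I would establish are the following. First, a bound on $\|A^{-1}\|$: by Cramer's rule each entry of $A^{-1}$ is a signed $(d-1)\times(d-1)$ cofactor divided by $\det A$, and the hypotheses $|A_{ij}| \leq C$ and $|\det A| \geq c$ give $|(A^{-1})_{ij}| \leq (d-1)!\,C^{d-1}/c$, from which the standard bound $\|B\| \leq d\max_{ij}|B_{ij}|$ yields $\|A^{-1}\| \leq d\,(d-1)!\,C^{d-1}/c$. Second, a Lipschitz estimate for $\nabla f$: the mean value theorem applied componentwise to $\partial_j f_i$, combined with $|D^\nu f_i| \leq C$ for $|\nu| = 2$, yields $|\partial_j f_i(x) - \partial_j f_i(0)| \lesssim \sqrt{d}\,C|x|$, hence $\|\nabla f(x) - A\| \lesssim d^{3/2}\,C|x|$.

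Combining these two estimates gives $\|I - A^{-1}\nabla f(x)\| \leq \tfrac12$ for $|x| \leq r_1$ with the $r_1$ of the statement; this in particular makes $\nabla f(x)$ invertible on $B(0, r_1)$ (as a perturbation of the identity after multiplying by $A^{-1}$) and produces the contraction estimate $|T_y(x) - T_y(x')| \leq \tfrac12 |x - x'|$. The triangle inequality $|T_y(x)| \leq |T_y(0)| + \tfrac12 |x| \leq \|A^{-1}\|\,r_2 + \tfrac12 r_1$, together with the stated choice of $r_2$, shows $T_y(B(0, r_1)) \subset B(0, r_1)$, so Banach's theorem delivers the unique preimage of $y$, establishing injectivity of $f|_{B(0, r_1)}$ and the inclusion $B(0, r_2) \subset f(B(0, r_1))$. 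Openness of the image is immediate from the open-ball inclusion together with the same argument applied at any other image point. Finally, $f^{-1} \in C^{(k)}$ follows from the classical inverse function theorem at each interior point, using invertibility of $\nabla f$ on $B(0, r_1)$.

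The main obstacle is purely bookkeeping: tracking the powers of $d$ through the passages between sup-norms of matrix entries and operator norms, and between sup-norms of partial derivatives and $\ell^2$-norms of gradients, so as to land exactly on the constants $d^{7/2}$ and $d^{3/2}$ in the stated radii rather than slightly weaker ones, and also absorbing the factor of $2$ coming from the requirement that $\|A^{-1}\|r_2 \leq r_1/2$ (rather than $r_1$) to keep the image of $T_y$ safely inside the ball. No new analytic idea beyond the contraction-mapping proof is needed.
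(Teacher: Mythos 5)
The paper does not actually prove Lemma~\ref{app:lemma:1}: it is stated in the appendix as a quantitative form of the inverse function theorem and used without proof, so there is no argument in the paper to compare against. Your contraction-mapping proof via the Newton map $T_y(x)=x-A^{-1}(f(x)-y)$ is the standard route and is correct. The two quantitative ingredients check out: from Cramer's rule and the Frobenius-norm bound $\|B\|\leqslant d\max_{ij}|B_{ij}|$ you get $\|A^{-1}\|\leqslant d\,(d-1)!\,C^{d-1}/c$, and the componentwise mean value theorem applied to $\partial_j f_i$ gives $\|\nabla f(x)-A\|\leqslant d^{3/2}C|x|$, so $\|I-A^{-1}\nabla f(x)\|\leqslant d^{5/2}(d-1)!C^{d}c^{-1}|x|$. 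In fact this yields a $\tfrac12$-contraction already for $|x|\leqslant c/(2d^{5/2}(d-1)!C^{d})$; the paper's $r_1$ with $d^{7/2}$ is smaller (so you get a $\tfrac1{2d}$-contraction there), and the stated $r_2$ satisfies $\|A^{-1}\|r_2\leqslant r_1/(4d^{1/2})\leqslant r_1/2$, so $T_y$ maps $B(0,r_1)$ into itself for $|y|\leqslant r_2$. Injectivity, openness of the image (since $\nabla f$ is invertible throughout $B(a,r_1)$), and $C^{(k)}$ smoothness of $f^{-1}$ all follow as you describe. In short, your argument establishes the lemma and actually delivers slightly sharper constants than the ones recorded in the statement.
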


H\"ormander~\cite{hormander} Theorem 7.7.1 gives the following estimate obtained by integration by parts.

\begin{lemma}\label{app:lemma:2}
Let $K\subset \mathbb{R}^d$ be a compact set, $X$ an open neighborhood of $K$ and $k$ a nonnegative integer. If $u\in C_0^k(K)$, real $f\in C^{k+1}(X)$, then
\begin{equation*}
|\int u(x) e^{i \lambda f(x)}\,dx|\leqslant C |K|\lambda^{-k} \sum_{|\nu|\leqslant k} \sup |D^{\nu}u||\nabla f|^{|\nu|-2k}, \quad \lambda>0.
\end{equation*}
Here $C$ is bounded when $f$ stays in a bounded set in $C^{k+1}(X)$.
\end{lemma}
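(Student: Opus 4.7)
My plan is to prove this by the classical nonstationary-phase integration-by-parts scheme. First, observe that if $|\nabla f|$ vanishes at some point of $\supp u$, each exponent $|\nu|-2k$ is nonpositive and the right-hand side equals $+\infty$, so the inequality is trivial; I may therefore assume $|\nabla f|>0$ on an open neighborhood of $\supp u$. The key object is the first-order operator
\begin{equation*}
L = \frac{1}{i\lambda|\nabla f|^{2}}\sum_{j=1}^{d}(\partial_j f)\,\partial_j,
\end{equation*}
whose defining property is $L(e^{i\lambda f}) = e^{i\lambda f}$ wherever $\nabla f\neq 0$. Its formal transpose is
\begin{equation*}
L^{t}v = -\frac{1}{i\lambda}\sum_{j=1}^{d}\partial_j\!\left(\frac{\partial_j f}{|\nabla f|^{2}}\,v\right).
\end{equation*}

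Because $u\in C_{0}^{k}(K)$, I can integrate by parts $k$ times without any boundary contributions to obtain
\begin{equation*}
\int u(x)\,e^{i\lambda f(x)}\,dx = \int (L^{t})^{k}u(x)\,e^{i\lambda f(x)}\,dx.
\end{equation*}
Taking absolute values and using $|e^{i\lambda f}|=1$, the claimed inequality will follow at once from a pointwise bound of the form
\begin{equation*}
|(L^{t})^{k}u(x)|\leqslant C\lambda^{-k}\sum_{|\nu|\leqslant k}|D^{\nu}u(x)|\,|\nabla f(x)|^{|\nu|-2k},
\end{equation*}
with $C$ depending only on $k$, $d$, and $\|f\|_{C^{k+1}(X)}$; the factor $|K|$ in the final estimate then comes from the trivial volume bound on the support.

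The main obstacle, and really the only work, is this pointwise estimate on $(L^{t})^{k}u$, which I would establish by induction on $k$. Each application of $L^{t}$ introduces one factor $\lambda^{-1}$ and, by the Leibniz rule, an outer derivative $\partial_j$ that either lands on a current factor $D^{\nu}u$ (enlarging $|\nu|$ by one, with multiplicative coefficient $\partial_j f/|\nabla f|^{2}$ of size $|\nabla f|^{-1}$), or lands on an existing coefficient $\partial_j f/|\nabla f|^{2}$ (contributing size $|\nabla f|^{-2}$, by the quotient rule together with $|\nabla(|\nabla f|^{2})|\lesssim|\nabla f|$). After $k$ iterations, a term that finally produces a derivative $D^{\nu}u$ has undergone exactly $|\nu|$ steps of the first kind and $k-|\nu|$ of the second, yielding combined weight
\begin{equation*}
|\nabla f|^{-|\nu|}\cdot|\nabla f|^{-2(k-|\nu|)} = |\nabla f|^{|\nu|-2k},
\end{equation*}
as required. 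All pure derivatives of $f$ that appear in numerators are uniformly bounded by $\|f\|_{C^{k+1}(X)}$, and are absorbed into $C$; this is precisely why $C$ stays bounded as long as $f$ remains in a bounded subset of $C^{k+1}(X)$. The delicate point is keeping track of the Leibniz expansion at each stage, but the uniform size estimate $O(|\nabla f|^{-2})$ for the derivative of $(\partial_j f)/|\nabla f|^{2}$ makes the inductive exponent counting close cleanly.
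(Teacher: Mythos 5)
The paper does not prove this lemma; it simply quotes it as H\"{o}rmander's Theorem~7.7.1. Your proof is the standard non-stationary-phase integration-by-parts argument that underlies H\"{o}rmander's proof, so the approach is correct and is in fact "the paper's proof."

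One place where the argument is genuinely glossed over is the inductive bookkeeping, and I want to flag it because it is the only nontrivial part of the proof. Your two-case Leibniz analysis (derivative lands on $D^{\nu}u$, or on a coefficient $a_j=\partial_j f/|\nabla f|^{2}$) describes the \emph{first} application of $L^{t}$ accurately, but after that the coefficients in front of $D^{\nu}u$ are not single factors $a_j$; they are products of factors of the form $D^{\beta}a_j$. What the induction really needs is the statement
\begin{equation*}
|D^{\beta}a_j|\lesssim |\nabla f|^{-1-|\beta|}\qquad\text{for all }|\beta|\leqslant k-1,
\end{equation*}
with constants controlled by $\|f\|_{C^{k+1}}$. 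This is not a formal consequence of the $|\beta|=1$ estimate you state. It does hold, but to see it one must expand $D^{\beta}(\partial_j f/g)$ (with $g=|\nabla f|^{2}$) via the quotient rule into terms of the form $(D^{\beta_0}\partial_j f)(D^{\gamma_1}g)\cdots(D^{\gamma_p}g)/g^{1+p}$, note that $\partial_l f$ and $\partial_i g$ each contribute a factor $\lesssim|\nabla f|$ while higher derivatives of $f$ (hence of $g$) contribute $O(1)$, and then count exponents. The count closes precisely because $|\nabla f|\leqslant\|f\|_{C^1}$ is bounded on a bounded set of $C^{k+1}(X)$, which is exactly where the hypothesis on $C$ enters. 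Once that one-line lemma is in place, every term produced after $k$ applications of $L^{t}$ is of the form $\lambda^{-k}\bigl(\prod_l D^{\beta_l}a_{j_l}\bigr)D^{\nu}u$ with $|\nu|+\sum_l|\beta_l|=k$, and the product of coefficients is $\lesssim|\nabla f|^{-k-\sum|\beta_l|}=|\nabla f|^{|\nu|-2k}$ as desired. I would also state the trivial-case reduction slightly more carefully: for $k\geqslant 1$ the reduction to $\nabla f\neq 0$ on $\operatorname{supp}u$ is as you say, but for $k=0$ the inequality is just the volume bound and needs no such reduction.
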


The following two lemmas are two forms of the method of stationary phase. The first one is the one dimensional version of H\"ormander~\cite{hormander} Lemma 7.7.3. The second is Sogge and Stein~\cite{soggestein} Lemma 2.

\begin{lemma}\label{app:lemma:4} If $u\in \mathscr{S}(\mathbb{R})$ then for every $k\in \mathbb{N}$
\begin{align*}
\big|\int u(x) e^{-i \lambda x^2/2} \,dx &-(2\pi)^{1/2}e^{-\pi i/4}\lambda^{-1/2}\sum_{j=0}^{k-1}(2i\lambda)^{-j} u^{(2j)}(0)/j!\big|\\
  &\leqslant (2^{1-k}\sqrt{\pi}/k!) \lambda^{-k-1/2} (\|u^{(2k)} \|_{L^2}+\|u^{(2k+1)} \|_{L^2}).
\end{align*}
\end{lemma}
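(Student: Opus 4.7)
The plan is to reduce this oscillatory integral to a Fourier analytic computation by exploiting the fact that the Gaussian $e^{-i\lambda x^2/2}$ has an explicit Fourier transform. Concretely, using the classical formula
\begin{equation*}
\int_{\mathbb{R}} e^{-i\lambda x^2/2} e^{-ix\eta}\,dx = (2\pi/\lambda)^{1/2} e^{-i\pi/4} e^{i\eta^2/(2\lambda)},
\end{equation*}
together with Parseval's identity, I would rewrite
\begin{equation*}
\int u(x) e^{-i\lambda x^2/2}\,dx = (2\pi)^{-1/2} e^{-i\pi/4}\lambda^{-1/2} \int \hat u(\xi) e^{i\xi^2/(2\lambda)}\,d\xi.
\end{equation*}
Then Taylor expand $e^{i\xi^2/(2\lambda)}$ to order $k$ at the origin. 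After matching the finite-sum terms via Fourier inversion (using $\widehat{u^{(2j)}}(\xi)=(i\xi)^{2j}\hat u(\xi)$, hence $\int \hat u(\xi)\xi^{2j}\,d\xi = 2\pi(-1)^j u^{(2j)}(0)$), the $j$-th term becomes $(2\pi)^{1/2}e^{-i\pi/4}\lambda^{-1/2}(2i\lambda)^{-j}u^{(2j)}(0)/j!$, giving the claimed main expansion.

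The remainder from Taylor's theorem applied to $e^{it}$ at $t=\xi^2/(2\lambda)$ obeys $|R_k(\xi)|\leqslant (2\lambda)^{-k}|\xi|^{2k}/k!$. Inserting this into the integral above bounds the error by
\begin{equation*}
(2\pi)^{-1/2}\lambda^{-k-1/2}\,2^{-k}(k!)^{-1}\int |\hat u(\xi)|\,|\xi|^{2k}\,d\xi.
\end{equation*}
To bring in the two $L^2$ norms appearing on the right-hand side, I would control the last integral by the Cauchy--Schwarz trick $|\hat u(\xi)||\xi|^{2k} = |\hat u(\xi)||\xi|^{2k}(1+|\xi|)\cdot (1+|\xi|)^{-1}$. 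The weight $(1+|\xi|)^{-2}$ integrates to $2$, and on the other factor one uses $(1+|\xi|)^2\leqslant 2(1+\xi^2)$ together with Plancherel applied to $\widehat{u^{(2k)}}$ and $\widehat{u^{(2k+1)}}$. This yields the bound $\sqrt{\pi}\,(\|u^{(2k)}\|_{L^2}+\|u^{(2k+1)}\|_{L^2})$ (using $\|\hat f\|_{L^2}=\sqrt{2\pi}\|f\|_{L^2}$) for the $L^1$ integral, up to the constants already extracted.

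Combining everything produces the claimed constant $2^{1-k}\sqrt{\pi}/k!$. The principal obstacle is purely arithmetic: the precise constant depends sensitively on the Fourier-transform normalization and on absorbing several $\sqrt{2}$'s and $\sqrt{\pi}$'s correctly through Parseval and Cauchy--Schwarz, so the bookkeeping is the one part that needs to be done with care. This is the one-dimensional special case of H\"ormander's Lemma 7.7.3, and no ingredient beyond the Fourier transform of the Gaussian and elementary Taylor/Plancherel manipulations is required.
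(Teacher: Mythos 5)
Your argument is exactly H\"ormander's proof of Lemma 7.7.3 (specialized to one dimension), which is what the paper itself cites without reproducing: rewrite $\int u\,e^{-i\lambda x^2/2}$ via Parseval and the Fresnel formula, Taylor-expand $e^{i\xi^2/(2\lambda)}$ at $0$, identify each polynomial moment $\int\hat u(\xi)\xi^{2j}\,d\xi=2\pi(-1)^j u^{(2j)}(0)$ to recover the main sum, and bound the Taylor remainder by Cauchy--Schwarz against an integrable weight. The main-term computation and the route are both correct.

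One small bookkeeping slip in the error estimate, which goes in your favor so it does not harm the proof. With your weight $(1+|\xi|)^{-1}$, Cauchy--Schwarz gives
\begin{equation*}
\int|\hat u(\xi)|\,|\xi|^{2k}\,d\xi
\leqslant \sqrt 2\Big(\int|\hat u|^2|\xi|^{4k}(1+|\xi|)^2\,d\xi\Big)^{1/2}
\leqslant 2\sqrt{2\pi}\,\big(\|u^{(2k)}\|_{L^2}+\|u^{(2k+1)}\|_{L^2}\big),
\end{equation*}
not $\sqrt\pi(\cdots)$ as you wrote; the $\sqrt2$'s come from $\int(1+|\xi|)^{-2}=2$ and from $(1+|\xi|)^2\leqslant 2(1+\xi^2)$, and $\sqrt{2\pi}$ from Plancherel. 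Feeding $2\sqrt{2\pi}$ through your prefactor $(2\pi)^{-1/2}2^{-k}/k!$ yields the constant $2^{1-k}/k!$, which is actually slightly \emph{smaller} than the stated $2^{1-k}\sqrt\pi/k!$, so the lemma follows with a little to spare. (Taking the weight $(1+\xi^2)^{-1/2}$ instead gives $\int(1+\xi^2)^{-1}=\pi$ and the constant $2^{-k}\sqrt\pi/k!$.) Apart from this arithmetic, the proposal is complete.
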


\begin{lemma}\label{app:lemma:3}
Suppose $\phi$ and $\psi$ are smooth functions in $B(0, \delta)$ with $\phi$ real-valued. Assume that
\begin{equation*}
|(\frac{\partial}{\partial x})^{\nu}\phi|\leqslant C \quad \textrm{for $|\nu|\leqslant d+2$}
\end{equation*}
and
\begin{equation*}
|(\frac{\partial}{\partial x})^{\nu}\psi|\leqslant C\delta^{-|\nu|} \quad \textrm{for $|\nu|\leqslant d$}.
\end{equation*}
We also suppose that $(\nabla \phi)(0)=0$, but $|\det \nabla^2\phi(0)|\geqslant \delta$. Then there exists a positive constant $c_1$ (depending only on $\phi$), which is sufficiently small, so that if $\psi$ is supported in $B(0, c_1\delta)$ we can assert that
\begin{equation*}
|\int \psi e^{i\lambda \phi}\,dx|\leqslant C\lambda^{-d/2}\delta^{-1/2}.
\end{equation*}
\end{lemma}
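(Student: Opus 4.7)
The strategy is to rescale so the amplitude has support of unit size, then apply the quantitative method of stationary phase at a nondegenerate critical point. I would substitute $x=\delta y$ to obtain
$I:=\int\psi(x)e^{i\lambda\phi(x)}\,dx=\delta^d\int\tilde\psi(y)e^{i\lambda\tilde\phi(y)}\,dy$,
where $\tilde\psi(y):=\psi(\delta y)$ and $\tilde\phi(y):=\phi(\delta y)$. By the chain rule and the hypotheses, $\tilde\psi$ is supported in the fixed ball $B(0,c_1)$ and satisfies $|D^\nu\tilde\psi|\lesssim 1$ for $|\nu|\leq d$, while $|D^\nu\tilde\phi|\lesssim\delta^{|\nu|}$ for $|\nu|\leq d+2$.

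Next I would split into two regimes depending on whether $\lambda\leq\delta^{-2}$ or $\lambda>\delta^{-2}$. In the first regime the trivial bound $|I|\leq\delta^d\|\tilde\psi\|_\infty|B(0,c_1)|\lesssim\delta^d$ already yields $|I|\lesssim\lambda^{-d/2}\delta^{-1/2}$, because this inequality is equivalent to $\lambda\leq\delta^{-2-1/d}$, which is implied (assuming $\delta<1$) by $\lambda\leq\delta^{-2}$. In the second regime I would set $\mu:=\lambda\delta^2>1$ and renormalize the phase by $\Psi(y):=(\tilde\phi(y)-\tilde\phi(0))/\delta^2$, so that the integral factors as $\delta^d e^{i\lambda\tilde\phi(0)}\int\tilde\psi(y)e^{i\mu\Psi(y)}\,dy$. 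Direct differentiation gives $\nabla\Psi(0)=0$, $\nabla^2\Psi(0)=\nabla^2\phi(0)$ (so $|\det\nabla^2\Psi(0)|\geq\delta$), and $|D^\nu\Psi(y)|\lesssim\delta^{|\nu|-2}$ for $|\nu|\leq d+2$. In particular the Hessian of $\Psi$ is bounded uniformly in $\delta$, and since $|\nabla^2\Psi(y)-\nabla^2\Psi(0)|\lesssim c_1\delta$ on $B(0,c_1)$, choosing $c_1$ small enough (depending only on $\phi$) guarantees that $0$ is the only critical point of $\Psi$ in the support of $\tilde\psi$ and that $|\det\nabla^2\Psi|\asymp\delta$ throughout.

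The final step is to apply a quantitative stationary phase estimate (in the spirit of H\"ormander~\cite{hormander} Theorem 7.7.5) to $\int\tilde\psi(y)e^{i\mu\Psi(y)}\,dy$. The leading-order inequality
$\left|\int\tilde\psi(y)e^{i\mu\Psi(y)}\,dy\right|\lesssim \mu^{-d/2}|\det\nabla^2\Psi(0)|^{-1/2}\lesssim\mu^{-d/2}\delta^{-1/2}$,
combined with $\mu=\lambda\delta^2$ and the $\delta^d$ prefactor, produces $|I|\lesssim\delta^d\cdot\lambda^{-d/2}\delta^{-d}\cdot\delta^{-1/2}=\lambda^{-d/2}\delta^{-1/2}$, as required. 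The main obstacle is controlling the implicit constants in this stationary phase estimate uniformly in $\delta$, given that the Hessian determinant can be as small as $\delta$. The key point is that the decay $|D^\nu\Psi|\lesssim\delta^{|\nu|-2}$ for $|\nu|\geq 3$ is stronger than mere boundedness, which makes the cubic and higher corrections to the quadratic normal form of $\Psi$ into small perturbations on $B(0,c_1)$; consequently the implicit constants depend only on the $C^{d+2}$ norm of $\phi$ assumed in the hypothesis and not on $\delta$, and the $\delta$-dependence enters solely through the explicit factor $|\det\nabla^2\Psi(0)|^{-1/2}$, producing exactly the claimed $\delta^{-1/2}$ loss.
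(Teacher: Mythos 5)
The paper does not prove Lemma~\ref{app:lemma:3}; the appendix explicitly states that it is ``Sogge and Stein~\cite{soggestein} Lemma~2,'' quoted as a standard result. So there is no in-paper proof to compare with, and I will assess your argument on its own terms.

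Your uniform rescaling $x=\delta y$, the reduction to $\Psi(y)=(\tilde\phi(y)-\tilde\phi(0))/\delta^{2}$, the bookkeeping of derivative bounds, and the dispatch of the regime $\lambda\leqslant\delta^{-2}$ by the trivial estimate are all correct. The gap is in the last step, and it is a real one. The inequality you invoke, $\big|\int\tilde\psi e^{i\mu\Psi}\,dy\big|\lesssim\mu^{-d/2}|\det\nabla^{2}\Psi(0)|^{-1/2}$ with constants independent of $\delta$, \emph{is} Sogge--Stein's lemma after your rescaling; it is not a consequence of H\"ormander's Theorem~7.7.5. In H\"ormander's statement the constant is only asserted to be bounded when $|x-x_{0}|/|f'(x)|$ stays bounded, and here that ratio is comparable to $\delta^{-1}$; the correction operators $L_{j}$ also carry inverse powers of $\nabla^{2}\Psi(0)$. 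Moreover, with the regularity available ($\phi\in C^{d+2}$, $\psi\in C^{d}$) one cannot even take $k$ large in that theorem. In short you have reformulated the lemma, not proved it.

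The heuristic that the decay $|D^{\nu}\Psi|\lesssim\delta^{|\nu|-2}$ for $|\nu|\geqslant 3$ makes the cubic corrections into a small perturbation does not survive the natural normalization. To extract the factor $|\det A|^{-1/2}$ (with $A=\nabla^{2}\Psi(0)=\mathrm{diag}(\lambda_{i})$) one passes to the anisotropic coordinates $z_{i}=|\lambda_{i}|^{1/2}y_{i}$, and if, say, $|\lambda_{1}|\asymp\delta$, then the remainder $\tilde E$ satisfies only $|D_{z}^{3}\tilde E|\lesssim|\lambda_{1}|^{-3/2}\cdot\delta\asymp\delta^{-1/2}$, while the rescaled amplitude has $|D_{z}^{\nu}(\tilde\psi\circ y)|\lesssim\delta^{-|\nu|/2}$. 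Neither is uniformly controlled. Also, if one instead tries to absorb $e^{i\mu E}$ into the amplitude and treat the phase as purely quadratic, one needs $\mu\sup|\nabla E|\lesssim 1$, which only holds for $\mu\lesssim\delta^{-1}$, while the trivial bound only covers $\lambda\leqslant\delta^{-2-1/d}$, i.e.\ $\mu\lesssim\delta^{-1/d}$; for $d\geqslant 2$ there is an intermediate range of $\mu$ where neither argument closes. A complete proof must either reproduce the Sogge--Stein argument (anisotropic rescaling of both phase and amplitude, with the hypothesis $\supp\psi\subset B(0,c_{1}\delta)$ used to control the cubic remainder in those coordinates) or carry out a further case analysis according to the size of $\mu$ relative to the eigenvalues $|\lambda_{i}|$.
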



\subsection*{Acknowledgments}
The subject of this paper was suggested by Professor Andreas Seeger. I would like to express my gratitude to him for his valuable advice and great help during the work.


\end{document}